\newtheorem{thm}{\bf Theorem}[section]
\newtheorem{lem}[thm]{\bf Lemma}
\newtheorem{prop}[thm]{\bf Proposition}
\newtheorem{rem}[thm]{\bf Remark}
\def\ds{\displaystyle}
\def\eps{\varepsilon}
\def\ph{\varphi}
\DeclarePairedDelimiter{\ceil}{\lceil}{\rceil}
\DeclarePairedDelimiter\floor{\lfloor}{\rfloor}
\DeclareMathAlphabet{\mathpzc}{OT1}{pzc}{m}{it} 
\def\Area{\operatorname{Area}}
\def\Boundary{\partial}
\newcommand{\CeilingFunction}[1]{\ceil*{#1}}
\newcommand{\ClosedInterval}[1]{\left[#1\right]}
\def\ComplexPlane{\SetOfComplexNumbers}
\def\ComposedWith{\circ}
\newcommand{\ContinuedFractionFlatNotation}[1]{\left[{#1}\right]}
\def\Distance{d}
\def\EmptySet{\emptyset}
\newcommand{\FractionalPart}[1]{\left\{#1\right\}}
\def\ImaginaryPart{\mathpzc{Im}\,}
\newcommand{\IntegerPart}[1]{\floor*{#1}}
\newcommand\Interior[1]{\ring{#1}}
\newcommand{\MappingDomain}[1]{\mathcal{D}\left(#1\right)}
\newcommand{\OpenInterval}[1]{\left]#1\right[}
\newcommand{\OpenClosedInterval}[1]{\left]#1\right]}
\def\RealPart{\mathpzc{Re}\,}
\def\RiemannSphere{\widehat{\ComplexPlane}}
\def\SetComplement{\backslash}
\def\SetOfComplexNumbers{\mathbb{C}}
\def\SetOfIntegers{\mathbb{Z}}
\def\SetOfNaturalNumbers{\mathbb{N}}
\def\SetOfNonNegativeRealNumbers{\SetOfRealNumbers_+}
\def\SetOfPositiveNaturalNumbers{\mathbb{N}^*}
\def\SetOfRealNumbers{\mathbb{R}}
\newcommand{\SmallO}[1]{o\left(#1\right)}
\def\SquareRootOfMinusOne{i}
\def\TendsTo{\rightarrow}
\newcommand{\TopologicalBoundary}[1]{\partial #1}
\def\Disk{\mathbb{D}}
\def\AttractingTag{{a}}
\def\AttractingFatouCoordinates{\Phi^{\AttractingTag}}
\def\BottcherMap{B}
\def\BottcherMapInverse{\psi}
\def\BottcherInverseLaurentCoefficient{b}
\def\BPFPAttractingArc{\ell^{\AttractingTag}}
\def\BPFPAttractingCrescent{S^{\AttractingTag}}
\def\BPFPAttractingPetal{\Omega^{\AttractingTag}}
\def\BPFPAttractingPetalReferencePoint{z^{\AttractingTag}}
\def\BPFPDomainOfExtendedAttractingFatouCoordinates{B^{\AttractingTag}}
\def\BPFPDomainOfExtendedRepellingFatouCoordinates{B^{\RepellingTag}}
\def\BPFPRepellingArc{\ell^{\RepellingTag}}
\def\BPFPRepellingCrescent{S^{\RepellingTag}}
\def\BPFPRepellingPetal{\Omega^{\RepellingTag}}
\def\BPFPRepellingPetalReferencePoint{z^{\RepellingTag}}
\def\BPFPUnionOfAllPetals{\Omega}
\def\DoubleMandelbrotSet{\mathcal{DM}}
\def\FilledJuliaSet{K}
\def\GreenFunction{G}
\def\GreenFunctionSublevelSet{V}
\def\InverseRepellingFatouCoordinates{\varphi}
\newcommand{\Iterated}[1]{{\circ #1}}
\def\JuliaSet{J}
\def\MandelbrotSet{\mathcal{M}}
\def\PreFatouCoverMap{\tau}
\def\QuadraticPolynomialC{P}
\newcommand{\QuadraticPolynomialCFormula}[2]{#2^2 + #1}
\def\QuadraticPolynomialLambda{Q}
\newcommand{\QuadraticPolynomialLambdaFormula}[2]{#1 #2 + #2^2}
\def\RepellingTag{r}
\def\RepellingFatouCoordinates{\Phi^{\RepellingTag}}
\def\area{\Area}
\def\bott{\BottcherMap}
\def\bottinv{\BottcherMapInverse}
\def\bottinvcoeff{\BottcherInverseLaurentCoefficient}
\def\ceilf{\CeilingFunction}
\def\cplxp{\ComplexPlane}
\def\critp{\CriticalPoint}
\def\disk{\Disk}
\def\dblmand{\DoubleMandelbrotSet}
\def\fillj{\FilledJuliaSet}
\def\green{\GreenFunction}
\def\greensl{\GreenFunctionSublevelSet}
\def\ii{\SquareRootOfMinusOne}
\def\interior{\Interior}
\def\iter{\Iterated}
\def\julia{\JuliaSet}
\def\qpc{\QuadraticPolynomialC}
\def\qpcf{\QuadraticPolynomialCFormula}
\def\qpl{\QuadraticPolynomialLambda}
\def\qplf{\QuadraticPolynomialLambdaFormula}
\def\riesph{\RiemannSphere}
\def\reals{\SetOfRealNumbers}
\def\realsnn{\SetOfNonNegativeRealNumbers}
\def\tendsto{\TendsTo}
\def\wo{\SetComplement}
\def\FakeCremerExponent{16}
\def\BottcherDomain{U}
\def\BottcherRangeComplementaryDisk{D}
\def\bic{\bottinvcoeff}
\def\AlphaNeighbourhoodBound{{\alpha_0}}
\def\AreaGap{C}
\def\AreaFormulaPartialSum{A}
\def\AreaFormulaPartialSumUpperBound{N}
\def\BigIterate{p}
\def\CriticalPoint{c}
\def\DomainOfInverseRepellingFatou{H}
\def\DomainOfInverseRepellingFatouMaxRealPart{\xi}
\def\EggBeaterPhase{\tau}
\def\EscapingDomain{U}
\def\EscapeRadius{R}
\def\IterationsToThePetal{m}
\def\LavaursMap{L}
\def\LogGrowthOrder{\gamma}
\def\MaxDifferenceInRealPartFatouCoordinate{\chi}
\def\TheOtherFixedPoint{\sigma}
\def\TransitTimeMinusInverseRotationNumberBound{P}
\newcommand{\greencrit}[1]{\mathcal{N}_{#1}}
\def\areapartsum{\AreaFormulaPartialSum}
\def\bigiterate{\BigIterate}
\def\DomainOf{\MappingDomain}
\def\domainof{\DomainOf}
\def\ebphase{\EggBeaterPhase}
\def\escrad{\EscapeRadius}
\def\sumend{\AreaFormulaPartialSumUpperBound}
\def\lis{\sumend}
\def\lit{\bigiterate}
\def\pa{\areapartsum}
\begin{document}

\title{Convergence properties of the Gronwall area formula for quadratic Julia sets}
\author{Alexandre Dezotti}
\date{}


\maketitle
\begin{abstract}
Gronwall area formula can be used to express
 the area of the filled Julia set of connected quadratic Julia sets.
Using parabolic enrichment, it is shown that there cannot be a nice approximation of this formula by partial sums
 which is  uniform along the boundary of the main cardioid of the Mandelbrot set.
\end{abstract}

\section{Introduction}

One important aspect of the study of the iteration of holomorphic functions is the study of the Julia set.
The Julia set of a holomorphic mapping corresponds to the chaotic part of the dynamical system consisting
in iterating the corresponding mapping.

The question of the measure of Julia sets is a very natural question, already raised by Fatou in his memoir
 \cite{Fatou1919}, p.243.
For some time it has been possible to conjecture that all of the Julia sets of quadratic polynomials have zero area.
Indeed this is the case in many situations
 (see, for example \cite{DouadyHubbard1985}, \cite{Lyubich1991}, \cite{PetersenZakeri2004})
 and, if this conjecture were true,
 it would have implied the conjecture
 stating that hyperbolic dynamical systems are dense in the family of complex quadratic polynomials
\footnote{
This conjecture could, in some way, be dated back to Fatou's discussion \cite{Fatou1920a}, p.73.
Nevertheless, a likely interpretation of his discussion would lead to a slightly different conjecture,
 which has been proven false.
On this matter, compare McMuller \cite{McMullenBook1994}, chapter 4.}.
No proof of the latter conjecture has been published yet, except for real polynomials
(\cite{GraczykSwiatek1997} or \cite{GraczykSwiatekBook1998}),
 but in their paper \cite{BuffCheritat2012},
 Buff and Ch\'eritat showed the existence of quadratic polynomials for which the Julia set has positive area.
Hence the proof of the hyperbolicity density conjecture cannot relies on this argument.

Interestingly, some of those examples are period one Cremer parameters.
 Recall that, in the quadratic family, a period one Cremer parameter is a value of $c\in\SetOfComplexNumbers$
 such that the polynomial $\qpc_c(z)=z^2+c$
 has a non hyperbolic fixed point \footnote{Which means, here, by a slight abuse of terminology,
 that the derivative has modulus one.}
 on the neighbourhood of which $\qpc_c$
 cannot be conjugated to its linear part and such that the linear part is not periodic.
Those parameters lie in the boundary of the main cardioid of the Mandelbrot set.
An equivalent definition of Cremer parameters is the following.
 If $z_0$ denotes the corresponding fixed point,
 $|\qpc_c'(z_0)|=1$, $\qpc_c'(z_0)$ is not a root of unity and $\qpc_c$ is not linearisable around $z_0$.
The point $z_0$ is called a (period one)
 Cremer point and one can also talk about periodic Cremer point in a simliar way for any period
 \footnote{And also the fact that the function is a quadratic polynomial has nothing to do with the general definition,
 valid for any holomoprhic function around $z_0$.}.

For polynomials,
 one can define the filled Julia set
 which is the union of the Julia set and all the bounded component of its complement in the complex plane.
The filled Julia sets of other parameters such as Siegel parameters and parabolic parameters have positive area.
The Siegel parameters correspond to the linearisable case of a non hyperbolic fixed (or periodic) point
 and parabolic parameters are the values of $c$ such that $\qpc_c$ has a periodic point $z_0$ for which
 $\left(\qpc_c^{\Iterated p}\right)'(z_0)$ is a root of unity where $p$ denotes the period of $z_0$.
In both case, the filled Julia set has non empty interior.

As the boundary of the main cardioid correponds to the set of quadratic polynomials having a non hyperbolic fixed point,
 Hubbard asked whether there would be a lower bound on the area of the filled Julia set along the boundary of the main cardioid.
 For example, the question is stated for the Cremer parameters in \cite{Holbaek2012}.
The question is more critical for Cremer parameters as we do not know if some of them have a Julia set with zero area.
Indeed, the complexity of the situation is reflected by the fact that,
 due to the ``close proximity'' of Cremer parameters to parabolic parameters,
 standard algorithms cannot distinguish between Cremer parameters and parabolic parameters,
 see for example figure \ref{fig:cremer julia set?}.

It is well known that Cremer parameters are generic on the boundary of a hyperbolic component of the Mandelbrot set
\cite{B-Milnor2006}.
 Moreover Lyubich proved that for generic parameters (in the sense of Baire)
 on the boundary of the Mandelbrot set the area of the Julia set is zero \cite{Lyubich1983}.
Unfortunately this is insufficient to prove that some of this Cremer Julia set have zero area,
 as the complement of the union of the boundaries of hyperbolic components
 is a generic subset of the boundary of the Mandelbrot set.

\begin{figure}
\begin{center}
\includegraphics[width=0.42\textwidth]{./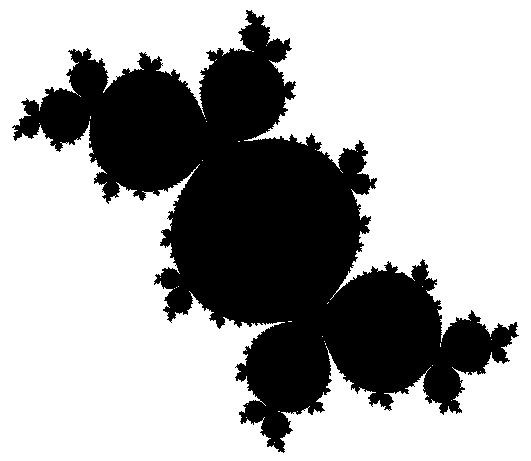}
\includegraphics[width=0.42\textwidth]{./ensemble_de_julia_rempli_du_lapin_gras.png}
\end{center}
\caption{\label{fig:cremer julia set?}
Computer representations of the filled Julia sets of $\lambda z +  z^2$ for 
 the parameter $\lambda=e^{2\pi\SquareRootOfMinusOne/3}$ of the fat Douady's rabbit on the left
 and, on the right, for the Cremer parameter
$\lambda=e^{2\pi\SquareRootOfMinusOne\theta}$, with
$\theta=\ContinuedFractionFlatNotation{0, 3, 10^{\FakeCremerExponent}, a_3, \dots, a_n, \dots }$
where $\ContinuedFractionFlatNotation{a_0,a_1,a_2,\dots}$
 is the notation for the continued fraction
 $a_0+1/(a_1+1/(a_2+\dots))$
 with coefficients $a_n$,
 and
$a_n$ satisfies, for $n\geq 2$, $a_{n+1}=2^{q_n}$, where $q_n$ is the denominator of the $n$th convergent.
}
\end{figure}

This article concerns the Gronwall area formula applied to the area of\linebreak
 quadratic filled Julia sets. Let's recall its content.

For a closed subset of the Riemann sphere $\RiemannSphere$
 containing at least two points and  whose complement is simply connected and contains $\infty$,
 the Gronwall area formula provides a way to compute its area, that is its Lebesgue two dimensional measure.
Namely\cite{Gronwall1914}, if
 $\BottcherMapInverse(w)=\ds{\sum_{n\leq 1}}\bic_{n}w^n$
 is the Laurent series of a conformal isomorphism between the complement of the disk of radius
 $r_0\geq 1$ centred at $0$ in $\RiemannSphere$
 and the complement of the compact set $\FilledJuliaSet\subset\SetOfComplexNumbers$, then, for any $r\geq r_0$,
\begin{equation}\label{eq:original area formula}
\Area \left(\left\{z:|\BottcherMapInverse^{-1}(z)|\leq r\right\}\right)
 = 
\pi\sum_{n\leq 1}n|\bic_{n}|^2r^{2n}.
\end{equation}
Gronwall area formula is essentially a consequence of Green's theorem.

As a consequence,
 this result yields an expression for the area of the filled Julia set of any polynomial map with connected Julia set.
Indeed, by a classical theorem of B\"ottcher (compare for example \cite{B-Milnor2006}),
 the dynamics of any polynomial of degree $d$ is conjugated to the dynamics of
 $w\mapsto w^d$ near $\infty$.
 This provides a natural isomorphism between the basin of attraction of $\infty$ and the complement of the unit disk.

It is possible to explicitly compute
the coefficients of the Laurent series of the inverse of the B\"ottcher map of a polynomial
 and then use them in order to numerically evaluate an approximation of the formula \eqref{eq:original area formula}
 via a finite summation.

We are particularly interested in the case of quadratic polynomials.
If we denote by $\GreenFunction_\lambda$ the Green function,
 i.e. the logarithm of the modulus of the B\"ottcher map
 of the quadratic polynomial $\qpl_\lambda(z)=\qplf{\lambda}{z}$,
we can define an approximation of the area of the sublevel set
 $\{z\in\SetOfComplexNumbers:\GreenFunction_\lambda(z)\leq \log r\}$
 by a finite sum with
\begin{equation}\label{eq:intro def area partial sum}
 \AreaFormulaPartialSum (\lambda,r,N)=\pi\ds{\sum_{n=-N}^{1}}n|\bic_{n}|^2r^{2n}.
\end{equation}

It is
 natural to ask how close the value of $\AreaFormulaPartialSum (\lambda,r,\lis)$
 is to the actual area of the sublevel set of the Green function,
 and, more interestingly, how close is the value of $\AreaFormulaPartialSum (\lambda,1,\lis)$
 to the area of the filled Julia set?

An example of a numerical computation using the Gronwall area formula is given in the figure
\ref{fig:area numerical computation}
 (see also \cite{B-Milnor2006}, appendix A).
This example shows, for quadratic polynomials in the form $\qpc_c(z)=\qpcf{c}{z}$,
 the value of the approximation when the parameter $c$
 varies on the upper half of the boundary of the main cardioid of the Mandelbrot set.
Similar methods can be applied to compute an estimate of the area of the Mandelbrot set,
 see, for example, the work of Ewing and Schober \cite{EwingSchober1992}
 where they also compare the result with pixel counting methods
 and lower bounds using the area of the biggest hyperbolic components.

\begin{figure}
\begin{center}
\scalebox{0.7}{
\input{./approximate_area_of_the_filled_julia_set_along_the_upper_part_of_the_cardioid.tex}
}
\end{center}
\caption{
\label{fig:area numerical computation}
Plot of computed values of the truncated area formula for filled Julia sets ($r=1$)
along the upper boundary of the main cardioid.
The different graphs represents different level of truncations:
 $1$, $20$, $200$, $2000$, $20000$ and $200000$ terms
 (due to the definition of $\BottcherMap_c$, half of the terms are $0$).
The result of the computations decreases as the level increases.
The values on the horizontal axis represent the rotation number.
}
\end{figure}

Testing the lower bound hypothesis with numerical experiments requires to know the answer to the previous questions.
While it can give the impression of a lower bound,
 the figure \ref{fig:area numerical computation}
 gives some hint of slow convergence of the terms appearing in the sum for some parameters,
 for example on the side of the $1/2$ hill on the right.
This could be an indication that one cannot relies on such numerical experiments
for investigating the existence of a lower bound on the area.

Indeed, the theorem below states that the area of the filled Julia set is discontinuous near the parabolic parameter $1$.
This prevents a uniform approximation of the area by the formula.

In the following statement, mappings on the form $\qpl_\lambda(z)=\qplf{\lambda}{z}$
 are considered with $\lambda\in\SetOfComplexNumbers$.
 Those are conjugated to the family of mappings $\qpc_c(z)=\qpcf{c}{z}$ by affine maps,
 the correpondance between the two parameters being $c=\frac{\lambda}{2}\left(1-\frac{\lambda}{2}\right)$.
The connectedness locus of the family $(\qpc_c)_c$ is the Mandelbrot set,
 its counterpart for the family $(\qpl_\lambda)_\lambda$, called the double Mandelbrot set 
 (\cite{B-Milnor2006}, figure 29),
 is represented on figure \ref{fig:the double mandelbrot set}.

\begin{thm}\label{thm:discontinuity of area}
Let $\FilledJuliaSet_\lambda$ denote the filled Julia set of the quadratic polynomial
 $\qpl_\lambda(z)=\qplf{\lambda}{z}$.

Then
\begin{equation*}
\limsup_{\lambda\TendsTo 1,|\lambda|=1}\Area(\FilledJuliaSet_\lambda)<\Area(\FilledJuliaSet_1),
\end{equation*}
\end{thm}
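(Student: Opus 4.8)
The plan is to combine the upper semicontinuity of $\lambda\mapsto\Area(\FilledJuliaSet_\lambda)$ with the description of the limiting dynamics furnished by parabolic enrichment, and to argue by contradiction. First I would record the (soft) non‑strict inequality. Fix $R>0$ so large that $\FilledJuliaSet_\lambda\subset\overline{D(0,R)}=:B$ for every $\lambda$ in a fixed neighbourhood of $1$; this is possible because $|z|\ge|\lambda|+2$ forces $|\qpl_\lambda(z)|\ge2|z|$. Writing $A_\lambda(\infty):=\{z:\qpl_\lambda^{\circ n}(z)\TendsTo\infty\}$ for the basin of infinity, the set $\{(\lambda,z)\in\SetOfComplexNumbers^2:z\in A_\lambda(\infty)\}$ is open, since having modulus larger than the (locally constant) escape radius after finitely many steps is an open condition on $(\lambda,z)$ and implies escape. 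Its indicator is therefore lower semicontinuous, so Fatou's lemma on $B$ gives $\liminf_{\lambda\TendsTo1}\Area(A_\lambda(\infty)\cap B)\ge\Area(A_1(\infty)\cap B)$, i.e. $\limsup_{\lambda\TendsTo1}\Area(\FilledJuliaSet_\lambda)\le\Area(\FilledJuliaSet_1)$, a fortiori the same with the constraint $|\lambda|=1$. Suppose, for contradiction, that this is an equality. Then there is a sequence $\lambda_n=e^{2\pi\ii\alpha_n}\TendsTo1$ on the unit circle with $\Area(\FilledJuliaSet_{\lambda_n})\TendsTo\Area(\FilledJuliaSet_1)$; replacing $\lambda_n$ by $\overline{\lambda_n}$ if needed (which changes neither the area nor the dynamics up to conjugation by $z\mapsto\bar z$) and passing to a subsequence, we may assume $\alpha_n\TendsTo0^+$.

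Next I would bring in parabolic enrichment. After passing to a further subsequence, the theory attaches to $(\qpl_{\lambda_n})$ a phase $\beta\in\SetOfRealNumbers/\SetOfIntegers$ — the limit modulo $1$ of the egg‑beater transit time, which is $\tfrac1{\alpha_n}+O(1)$ — together with its Lavaurs map $\LavaursMap_\beta=\InverseRepellingFatouCoordinates\circ T_\beta\circ\AttractingFatouCoordinates$, where $\AttractingFatouCoordinates$ is the attracting Fatou coordinate of $\qpl_1$ extended to a surjection of the parabolic basin $\mathcal B$ of $\qpl_1$ onto $\SetOfComplexNumbers$, $T_\beta$ is translation by $\beta$, and $\InverseRepellingFatouCoordinates$ is the inverse repelling Fatou coordinate extended to a map $\SetOfComplexNumbers\TendsTo\SetOfComplexNumbers$ by $\InverseRepellingFatouCoordinates(w+1)=\qpl_1(\InverseRepellingFatouCoordinates(w))$. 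Let
\[\widehat A_\beta\ :=\ \bigcup\bigl\{(h_1\circ\cdots\circ h_k)^{-1}(A_1(\infty))\ :\ k\ge0,\ h_1,\dots,h_k\in\{\qpl_1,\LavaursMap_\beta\}\bigr\}\]
be the escaping set of the enriched system, an open set containing $A_1(\infty)$. The shadowing estimates at the heart of parabolic enrichment give: for Lebesgue‑almost every $z$, if $z\in\widehat A_\beta$ then $z\in A_{\lambda_n}(\infty)$ for all large $n$ — an orbit of $\qpl_{\lambda_n}$ follows the $\qpl_1$‑orbit until it enters the egg‑beater, crosses it in $\sim\tfrac1{\alpha_n}$ steps, re‑emerges $o(1)$‑close to $\LavaursMap_\beta(z)$, and so on, so escape for the enriched dynamics propagates to escape for $\qpl_{\lambda_n}$. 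Applying Fatou's lemma on $B$ once more,
\[\Area(\widehat A_\beta\cap B)\ \le\ \liminf_n\Area(A_{\lambda_n}(\infty)\cap B)\ =\ \Area(B)-\limsup_n\Area(\FilledJuliaSet_{\lambda_n})\ =\ \Area(B)-\Area(\FilledJuliaSet_1)\,,\]
hence $\Area(\widehat A_\beta\cap B)\le\Area(A_1(\infty)\cap B)$, and since $\widehat A_\beta\supseteq A_1(\infty)$ this forces $\Area(\widehat A_\beta\SetComplement A_1(\infty))=0$.

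It remains to contradict this, i.e. to show that the Lavaurs map, whatever its phase, leaks positive area out of $\FilledJuliaSet_1$. Since $\widehat A_\beta\supseteq\LavaursMap_\beta^{-1}(A_1(\infty))$ and $\LavaursMap_\beta^{-1}(A_1(\infty))\subseteq\mathcal B\subseteq\FilledJuliaSet_1$ is disjoint from $A_1(\infty)$, it suffices to prove $\Area\bigl(\LavaursMap_\beta^{-1}(A_1(\infty))\bigr)>0$. The key point is that the repelling petal of $\qpl_1(z)=z+z^2$ pokes into the basin of infinity: for small $\varepsilon>0$ the backward orbit of $\varepsilon$ under the branch of $\qpl_1^{-1}$ fixing $0$ equals $\varepsilon-\varepsilon^2+\cdots$ and decreases to $0$, so $\varepsilon$ lies in the repelling petal and hence in $\InverseRepellingFatouCoordinates(\SetOfComplexNumbers)$, while its forward orbit $\varepsilon,\varepsilon+\varepsilon^2,\dots$ is strictly increasing and unbounded, so $\varepsilon\in A_1(\infty)$. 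Thus $\InverseRepellingFatouCoordinates^{-1}(A_1(\infty))$ is a non‑empty open set $\mathcal O\subseteq\SetOfComplexNumbers$; then $T_\beta^{-1}(\mathcal O)=\mathcal O-\beta$ is non‑empty open, and as $\AttractingFatouCoordinates:\mathcal B\TendsTo\SetOfComplexNumbers$ is continuous and surjective, $\LavaursMap_\beta^{-1}(A_1(\infty))=\AttractingFatouCoordinates^{-1}(\mathcal O-\beta)$ is a non‑empty open subset of $\mathcal B$, hence of positive area. This is the desired contradiction, and it proves the theorem.

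The main obstacle is the shadowing step: proving that, in the limit $\lambda_n\TendsTo1$, escape for the enriched pair $(\qpl_1,\LavaursMap_\beta)$ implies escape for $\qpl_{\lambda_n}$ requires the full machinery of perturbed Fatou coordinates, transit‑time asymptotics and horn maps underlying parabolic implosion (and some care that the argument is insensitive to whether the $\lambda_n$ are themselves parabolic). Everything else is light: upper semicontinuity is an application of Fatou's lemma to the escaping set, and the \emph{strictness} of the inequality then comes essentially for free from the elementary observation that the repelling petal of $z+z^2$ reaches into the basin of infinity, so that $\LavaursMap_\beta$ carries a set of positive Lebesgue measure to $\infty$ no matter what $\beta$ is.
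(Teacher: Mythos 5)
Your proof is correct and hinges on the same mechanism as the paper's: a Lavaurs map $\LavaursMap_\beta$ carries a set of positive Lebesgue measure from $\Interior{\FilledJuliaSet_1}$ into the basin of infinity $A_1(\infty)$ of $\qpl_1$, and perturbed Fatou coordinates (Lemma \ref{lem:cv to lavaurs map}) shadow $\LavaursMap_\beta$, so that set escapes under $\qpl_{\lambda_n}$ once $n$ is large. The logical packaging differs, however. The paper derives Theorem \ref{thm:discontinuity of area} directly from Lemma \ref{lem:domain exists}: it fixes a relatively compact $\Omega\subset A_1(\infty)\cap\Disk_\escrad$, sets $V=\LavaursMap_\EggBeaterPhase^{-1}(\Omega)\subset\Interior{\FilledJuliaSet_1}$, shows $V$ escapes under $\qpl_{\lambda_n}$ along subsequences with $\FractionalPart{1/\alpha_n}\TendsTo\EggBeaterPhase$, and then uses continuity of $\EggBeaterPhase\mapsto\Area(V)$ together with compactness of $\ClosedInterval{0,1}$ to get a uniform gap $\AreaGap>0$; combined with upper semicontinuity of $\lambda\mapsto\FilledJuliaSet_\lambda$ this gives $\limsup\Area(\FilledJuliaSet_{\lambda})\leq\Area(\FilledJuliaSet_1)-\AreaGap$. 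You instead argue by contradiction via Fatou's lemma: assuming equality, passing to a subsequence with convergent phase $\beta$, you force $\Area(\widehat A_\beta\SetComplement A_1(\infty))=0$ and contradict this with the observation that $\LavaursMap_\beta^{-1}(A_1(\infty))$ is a non-empty open subset of $\Interior{\FilledJuliaSet_1}$. The contradiction absorbs the uniformity-over-phase argument, a mild simplification, but yields no explicit gap, while the paper reuses $V$, $\AreaGap$ and $\TransitTimeMinusInverseRotationNumberBound$ from Lemma \ref{lem:domain exists} in the proof of Proposition \ref{prop:first estimate}. Two small remarks: the full enriched escaping set $\widehat A_\beta$ is more than you need, since only the depth-one preimage $\LavaursMap_\beta^{-1}(A_1(\infty))$ enters the contradiction; and the ``for Lebesgue-almost every $z$'' restriction in the shadowing step is unnecessary, because $\qpl_{\lambda_n}^{\Iterated{\lit_n}}\TendsTo\LavaursMap_\beta$ locally uniformly on $\Interior{\FilledJuliaSet_1}$, so \emph{every} $z\in\widehat A_\beta$ is eventually mapped into a compact subset of $A_1(\infty)$ and hence lies in $A_{\lambda_n}(\infty)$ for $n$ large.
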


In a more precise way, the theorem below shows that approximation of the area using a truncated Gronwall area formula
 fails to provide insight into the complexity of the variation of the filled Julia set.
Even if the number of terms used in finite sum approximations increases at a very fast rate as the parameter approaches $1$,
 there will be a definite discrepancy between the numerical result and the actual value of the area.

In what follows, we use the notation $\AreaFormulaPartialSum(\lambda,r,\lis)$
for the approximation of the area of Green sublevels given by
 \eqref{eq:intro def area partial sum}
 and $\FractionalPart{x}$ denotes the fractional part of the real number $x$,
 that is, $\FractionalPart{x}=x-n$ where $n$ is the largest integer smaller than or equal to $x$.

\begin{thm}\label{thm:too slow converges to K1}
 If $(\lis_n)_n$ is a sequence of natural numbers converging to $+\infty$ such that
 $\lis_n\in\SmallO{2^{1/\alpha_n}}$,
 then
\begin{equation*}
 \pa(\lambda_n, 1, \lis_n) \TendsTo \Area(\FilledJuliaSet_1).
\end{equation*}

\end{thm}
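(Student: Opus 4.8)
The plan is to reduce the statement to a convergence of partial sums of the Gronwall series, handle the easy inequality by continuity, and extract the hard one from the principle that, at potential scale $2^{-1/\alpha}$, the polynomial $\qpl_\lambda$ is not yet distinguishable from $\qpl_1$. Write $\bottinv_\lambda(w)=w+\bic_0(\lambda)+\sum_{k\ge 1}\bic_{-k}(\lambda)\,w^{-k}$ (normalised by $\bic_1=1$, as $\qpl_\lambda$ is monic). Matching coefficients in the B\"ottcher equation $\bottinv_\lambda(w^2)=\qpl_\lambda(\bottinv_\lambda(w))$ shows inductively that each $\bic_{-k}(\lambda)$ is a polynomial in $\lambda$, hence continuous on $\overline{\Disk}$, while the area theorem for the univalent map $\bottinv_\lambda$ gives $\sum_{k\ge 1}k\,|\bic_{-k}(\lambda)|^2\le 1$, so $k\,|\bic_{-k}(\lambda)|^2\le 1$ for every $k$ and every $|\lambda|\le1$. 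By \eqref{eq:original area formula} at $r=1$, $\Area(\fillj_\lambda)=\pi\bigl(1-\sum_{k\ge 1}k\,|\bic_{-k}(\lambda)|^2\bigr)$, and by \eqref{eq:intro def area partial sum}, $\pa(\lambda,1,N)=\pi\bigl(1-\sum_{k=1}^{N}k\,|\bic_{-k}(\lambda)|^2\bigr)$; hence the theorem is equivalent to
\begin{equation*}
\sum_{k=1}^{\lis_n}k\,|\bic_{-k}(\lambda_n)|^2\ \TendsTo\ \sum_{k\ge 1}k\,|\bic_{-k}(1)|^2\ =\ 1-\tfrac1\pi\Area(\fillj_1).
\end{equation*}

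One half is soft. For each fixed $M$, continuity of $\bic_{-1},\dots,\bic_{-M}$ and $\lambda_n\TendsTo 1$ give $\sum_{k=1}^{M}k\,|\bic_{-k}(\lambda_n)|^2\TendsTo\sum_{k=1}^{M}k\,|\bic_{-k}(1)|^2$; since the terms are nonnegative and $\lis_n\ge M$ eventually, $\liminf_n\sum_{k=1}^{\lis_n}k\,|\bic_{-k}(\lambda_n)|^2\ge\sum_{k=1}^{M}k\,|\bic_{-k}(1)|^2$, and letting $M\TendsTo\infty$ gives the lower bound $\liminf_n\sum_{k=1}^{\lis_n}k\,|\bic_{-k}(\lambda_n)|^2\ge 1-\tfrac1\pi\Area(\fillj_1)$.

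For the reverse, I would use that for any $\rho>1$, since $\rho^{2k}\le\rho^{2N}$ when $1\le k\le N$, the Gronwall formula at radius $\rho$ yields
\begin{equation*}
\sum_{k=1}^{N}k\,|\bic_{-k}(\lambda)|^2\ \le\ \rho^{2N}\sum_{k\ge 1}k\,|\bic_{-k}(\lambda)|^2\rho^{-2k}\ =\ \rho^{2N}\Bigl(\rho^2-\tfrac1\pi\Area\bigl(\{z:\green_\lambda(z)\le\log\rho\}\bigr)\Bigr),
\end{equation*}
the bracket lying in $[0,1]$ by the area theorem. Then I choose $\rho=\rho_n=1+\delta_n$ with $2^{-1/\alpha_n}=\SmallO{\delta_n}$, $\delta_n=\SmallO{1/\lis_n}$ and $\delta_n\TendsTo 0$; such $\delta_n$ exists exactly because $\lis_n\in\SmallO{2^{1/\alpha_n}}$ forces $2^{-1/\alpha_n}=\SmallO{1/\lis_n}$ (e.g.\ $\delta_n=\sqrt{2^{-1/\alpha_n}/\lis_n}$). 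Since $\lis_n\delta_n\TendsTo0$ we get $\rho_n^{2\lis_n}=(1+\delta_n)^{2\lis_n}\TendsTo1$ and $\rho_n^2\TendsTo1$, so it remains to show $\liminf_n\Area(\{\green_{\lambda_n}\le\log\rho_n\})\ge\Area(\fillj_1)$. As $\Area(\{\green_1\le\log\rho_n\})$ decreases to $\Area(\fillj_1)$ when $\rho_n\downarrow1$ (an elementary consequence of \eqref{eq:original area formula} and dominated convergence), it is enough to compare the sublevel set of $\green_{\lambda_n}$ with that of $\green_1$ at the same level $\log\rho_n$. This is where parabolic enrichment is needed: a point on the equipotential $\{\green_{\lambda_n}=\log(1+\delta_n)\}$ has forward $\qpl_{\lambda_n}$-orbit whose Green potential doubles at each step, hence reaches a fixed level $t_0$ after about $\log_2(1/\delta_n)$ iterates; since $\delta_n\gg2^{-1/\alpha_n}$ this count is $\ll1/\alpha_n$, so the orbit escapes to a definite distance from the region around the two colliding fixed points of $\qpl_{\lambda_n}$ --- the only place where $\qpl_{\lambda_n}$ and $\qpl_1$ genuinely differ --- before being affected by that difference; consequently $\bottinv_{\lambda_n}$ stays uniformly close to $\bottinv_1$ on $\{|w|\ge\rho_n\}$ strongly enough that the two sublevel sets have asymptotically equal area, and the inequality follows. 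Together with the soft half this gives the limit, hence Theorem~\ref{thm:too slow converges to K1}.

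The hard part is precisely that last step: the quantitative stability of $\bottinv_\lambda$ near the unit circle on the potential scale $\delta\asymp2^{-1/\alpha}$, equivalently the statement that the transit time through the near-parabolic region is of order $1/\alpha$ iterates (so of order $2^{-1/\alpha}$ in potential), together with enough control to turn equipotential closeness into area closeness. Everything else --- the reduction via the Gronwall formula, the area-theorem bound, continuity of finitely many polynomial coefficients, the crude summation estimate --- is routine. That stability statement is the technical core supplied by the Fatou-coordinate / Lavaurs-map (egg-beater) analysis, and the hypothesis $\lis_n\in\SmallO{2^{1/\alpha_n}}$ is calibrated so that the equipotentials we choose sit on the safe side of that scale.
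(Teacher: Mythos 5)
Your overall architecture matches the paper's: the easy half is exactly the paper's $\limsup$ argument (monotonicity of the partial sums in $N$ plus continuity of finitely many Laurent coefficients in $\lambda$), and your reduction of the hard half to $\liminf_n\Area(\{\green_{\lambda_n}\le\log\rho_n\})\ge\Area(\fillj_1)$ via the area-theorem bound and a choice of $\rho_n$ is exactly the mechanism of Lemma~\ref{lem:estim on area sums} and Lemma~\ref{lem:key lemma estim approx from below}. You also correctly identify that the threshold $\lis_n\in\SmallO{2^{1/\alpha_n}}$ is calibrated to the transit time $\approx 1/\alpha_n$ through the parabolic gate, and that the Fatou-coordinate theory is the technical core.

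However, you leave that core step as a heuristic, and the heuristic as stated does not close. You reason about points on the equipotential $\{\green_{\lambda_n}=\log(1+\delta_n)\}$, assert their orbits escape to a definite distance in $\log_2(1/\delta_n)\ll 1/\alpha_n$ iterates ``before being affected'' by the perturbation, and conclude that $\bottinv_{\lambda_n}$ is close to $\bottinv_1$ on $\{|w|\ge\rho_n\}$, hence that the sublevel sets have close area. There are two gaps here. First, naive error propagation along a $\log_2(1/\delta_n)$-step orbit near the parabolic point does not give smallness of $\qpl_{\lambda_n}^{\Iterated k}-\qpl_1^{\Iterated k}$ without the Fatou-coordinate control you are hoping to avoid, so ``before being affected'' is not a free consequence of the escape count. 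Second, even granted uniform closeness of $\bottinv_{\lambda_n}$ to $\bottinv_1$ on $\{|w|\ge\rho_n\}$, converting that into closeness of the areas of the complementary sublevel sets is an extra, nontrivial step that you do not supply. The paper goes the ``dual'' way and avoids both issues: it fixes the compact $\FilledJuliaSet_1^\eps=\{z\in\fillj_1:d(z,\TopologicalBoundary{\fillj_1})\ge\eps\}$, pushes it into the attracting petal after a bounded number $m$ of iterates (item~\ref{item:all orbits pass through attracting petal} of Theorem~\ref{prop:extended fatou coordinates}), and then uses the gate-crossing estimate (item~\ref{item:iterations through gate} of Theorem~\ref{prop:perturbed fatou coordinates}) to get a constant $\TransitTimeMinusInverseRotationNumberBound$, independent of $n$, such that $\qpl_{\lambda_n}^{\Iterated\lit}(\FilledJuliaSet_1^\eps)\subset\Disk_6$ for all $\lit\le\frac{1}{\alpha_n}-\TransitTimeMinusInverseRotationNumberBound$; by Lemma~\ref{lem:green and iterate} this places $\FilledJuliaSet_1^\eps$ inside a Green sublevel set at a level $\lesssim 2^{-1/\alpha_n}$, and Lemma~\ref{lem:key lemma estim approx from below} then gives $\liminf_n\pa(\lambda_n,1,\lis_n)\ge\Area(\FilledJuliaSet_1^\eps)$, from which the theorem follows by letting $\eps\TendsTo 0$. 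If you want to complete your proposal, you should replace the equipotential heuristic by this direct inclusion of a compact exhaustion of $\Interior{\fillj_1}$ in the sublevel set.
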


\begin{thm}\label{thm:the actual main theorem}

For any $\LogGrowthOrder>0$ 
and $\EggBeaterPhase\in\ClosedInterval{0,1}$, 
 there exists $C>0$ satisfying the following.

Let $(\alpha_n)_n$ be a sequence of positive real numbers
 converging to $0$
 and such that $\FractionalPart{\frac{1}{\alpha_n}}\TendsTo \EggBeaterPhase$,
 and let $(\lis_n)_n$ be a sequence of natural numbers.
Define $\lambda_n=e^{2\ii\pi\alpha_n}$ and
 suppose that $\log \lis_n\leq \frac{\LogGrowthOrder}{\alpha_n}$.

Then, there exists $n_0\in\SetOfNaturalNumbers$ such that, for all $n\geq n_0$,
\begin{equation*}
 \pa(\lambda_n, 1, \lis_n) \geq C + \area{K_{\lambda_n}}.
\end{equation*}

\end{thm}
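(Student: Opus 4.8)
Here is how I would attack Theorem \ref{thm:the actual main theorem}.

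\textbf{Step 1 (reduction to a Gronwall tail).} Since $\lambda_n$ lies on the boundary of the main component of the double Mandelbrot set, $K_{\lambda_n}$ is connected, so $\bottinv_{\lambda_n}(w)=\sum_{n\le1}\bic_n(\lambda_n)w^n$ on $\{|w|>1\}$ with $\bic_1=1$. Applying \eqref{eq:original area formula} for $r>1$ and letting $r\downarrow1$ (monotone convergence) gives $\sum_{m\ge1}m|\bic_{-m}(\lambda_n)|^2<\infty$ and $\area(K_{\lambda_n})=\pi\sum_{n\le1}n|\bic_n(\lambda_n)|^2$; comparing with the partial sum,
\[
 \pa(\lambda_n,1,\lis_n)-\area(K_{\lambda_n})=\pi\sum_{m>\lis_n}m\,|\bic_{-m}(\lambda_n)|^2=\bigl\|\bottinv_{\lambda_n}'-(\bottinv_{\lambda_n})_{\lis_n}'\bigr\|_{L^2(\{|w|>1\})}^2\ \ge\ 0 ,
\]
where $(\bottinv_{\lambda_n})_{\lis_n}$ is the Laurent truncation of order $\lis_n$. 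So it suffices to bound this below by a constant $C(\LogGrowthOrder,\EggBeaterPhase)>0$: the Laurent series of $\bottinv_{\lambda_n}$ must still carry a definite amount of Dirichlet energy beyond frequency $\lis_n$.

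\textbf{Step 2 (local model at $w=1$, via enrichment).} The ray of angle $0$ of $\qpl_{\lambda_n}$ lands at the $\beta$-fixed point $\TheOtherFixedPoint_{\lambda_n}=1-\lambda_n$, which is repelling with multiplier $\qpl_{\lambda_n}'(\TheOtherFixedPoint_{\lambda_n})=2-\lambda_n\to1$ as $\alpha_n\to0$ — this is exactly the half of the parabolic fixed point of $\qpl_1$ that splits off, and is where the perturbed Fatou coordinates $\AttractingFatouCoordinates_{\lambda_n}$, $\RepellingFatouCoordinates_{\lambda_n}$, the transit through the egg beater (transit time $\sim1/\alpha_n$ up to a bounded error, with $\FractionalPart{1/\alpha_n}\to\EggBeaterPhase$ fixing the limiting Lavaurs map $\LavaursMap_{\EggBeaterPhase}$ and the horn maps $\HornMap$) enter. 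Comparing the Böttcher equation $\BottcherMap_{\lambda_n}\circ\qpl_{\lambda_n}=\BottcherMap_{\lambda_n}^2$ with a Koenigs linearisation of $\qpl_{\lambda_n}$ at $\TheOtherFixedPoint_{\lambda_n}$ yields, on $\{|w|>1\}$ near $1$,
\[
 \bottinv_{\lambda_n}(w)=\TheOtherFixedPoint_{\lambda_n}+C_n\,(w-1)^{s_n}\,\Pi_n\!\left(\log(w-1)\right)+E_n(w),\qquad s_n=\frac{\log(2-\lambda_n)}{\log2} ,
\]
with $\Pi_n$ $(\log2)$-periodic, bounded and bounded away from $0$ (its oscillation encodes the local Julia set / the horn maps), $C_n$ a nonzero constant bounded above and below, and $E_n$ a remainder negligible for the tail in Step 1, all uniformly in $n$ and down to scales $|w-1|$ far below $e^{-\LogGrowthOrder/\alpha_n}$. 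The decisive features are $\operatorname{Re}s_n=\tfrac{4\pi^2}{\log2}\alpha_n^2(1+o(1))>0$ \emph{extremely small} while $\operatorname{Im}s_n\asymp\alpha_n$: the curve $w\mapsto\bottinv_{\lambda_n}(w)$ spirals around $\TheOtherFixedPoint_{\lambda_n}$, winding $\asymp\alpha_n\log\tfrac1{|w-1|}$ times while shrinking only like $|w-1|^{\operatorname{Re}s_n}$.

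\textbf{Step 3 (the tail does not vanish).} In the coordinate $u=1/w$ the $m$-th Taylor coefficient of $(1-u)^{s_n}$ is $\asymp m^{-s_n-1}/\Gamma(-s_n)$, and $|\Gamma(-s_n)|\asymp1/|s_n|\asymp1/\alpha_n$, so $m|\bic_{-m}(\lambda_n)|^2\asymp\alpha_n^2\,m^{-1-2\operatorname{Re}s_n}$ and hence, using $\alpha_n^2/\operatorname{Re}s_n\asymp1$,
\[
 \sum_{m>\lis_n}m\,|\bic_{-m}(\lambda_n)|^2\ \asymp\ \lis_n^{-2\operatorname{Re}s_n}\ \ge\ e^{-2\LogGrowthOrder\operatorname{Re}(s_n)/\alpha_n}\ \TendsTo\ 1 ,
\]
since $\log\lis_n\le\LogGrowthOrder/\alpha_n$ and $\operatorname{Re}(s_n)/\alpha_n\asymp\alpha_n\to0$; the periodic factor $\Pi_n$ and the singularities of $\bottinv_{\lambda_n}$ at the other preimages of $\TheOtherFixedPoint_{\lambda_n}$ (same exponent $s_n$) only add non‑negative contributions. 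Equivalently: the spiral spreads the Dirichlet energy of $\bottinv_{\lambda_n}$ over $\asymp1/\alpha_n^2$ consecutive dyadic frequency bands, each carrying a comparable share, whereas truncating at $\lis_n\le e^{\LogGrowthOrder/\alpha_n}$ only accounts for $\asymp1/\alpha_n$ of them. With Step 1 this gives $\pa(\lambda_n,1,\lis_n)\ge C(\LogGrowthOrder,\EggBeaterPhase)+\area(K_{\lambda_n})$ for all large $n$.

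\textbf{Expected main obstacle.} Everything is in Step 2: establishing the spiral local model for $\bottinv_{\lambda_n}$ at $w=1$ with \emph{uniform} control of $\Pi_n$, $C_n$ and the remainder $E_n$ across the whole range of scales down to $e^{-\LogGrowthOrder/\alpha_n}$ — in particular, ruling out that the fractal modulation $\Pi_n$ pinches toward $0$ over a full period, and that $E_n$ swamps the leading term as $\lambda_n\to1$. This is precisely what the parabolic‑enrichment machinery is built to supply (stability of the perturbed Fatou coordinates, the transit‑time estimate $\bigl|\bigiterate_n-\tfrac1{\alpha_n}\bigr|=\BigO{1}$, and continuity of the horn maps in the phase $\EggBeaterPhase$), and it is where the hypotheses $\FractionalPart{1/\alpha_n}\to\EggBeaterPhase$ and $\log\lis_n\le\LogGrowthOrder/\alpha_n$ are consumed.
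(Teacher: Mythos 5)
Your approach is genuinely different from the paper's, and it contains a serious gap at the critical scales.

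The paper never estimates Laurent coefficients directly. Its proof of this theorem (Proposition~\ref{prop:second estimate}) works entirely on the dynamical side: Lemma~\ref{lem:key lemma estim approx from below} converts the truncated sum $\pa(\lambda_n,1,\lis_n)$ into a lower bound by the area of $\{z:|\qpl_{\lambda_n}^{\Iterated{\lit_n}}(z)|\le R\}$, i.e.\ the set of points that stay bounded for $\lit_n$ iterates, and then Lemma~\ref{lem:existence of seq of lows escaping domains} supplies a definite-area set of \emph{slow escapers} by chaining $\CeilingFunction{\LogGrowthOrder}+1$ transits through the egg-beater: a nested sequence $\EscapingDomain^{\LogGrowthOrder'}\subset\dots\subset\EscapingDomain^0$, each sent into the previous by $\qpl_{\lambda_n}^{\Iterated{\BigIterate_n}}$, where the return maps converge to the fixed Lavaurs map $\LavaursMap_{\EggBeaterPhase}$ because $\FractionalPart{1/\alpha_n}\TendsTo\EggBeaterPhase$. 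That chain is exactly what lets the paper push past $\SmallO{2^{1/\alpha_n}}$ (the range of Proposition~\ref{prop:first estimate}) to $\log\lis_n\le\LogGrowthOrder/\alpha_n$ for arbitrary $\LogGrowthOrder$.

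The decisive flaw in your argument is that the local model of Step 2 is not valid at the scales that matter. A Koenigs linearisation at $\TheOtherFixedPoint_{\lambda_n}$ can only hold on a disc of radius $\asymp|\TheOtherFixedPoint_{\lambda_n}|\asymp\alpha_n$, since the linearising coordinate cannot contain the second fixed point at $0$. With $|\bottinv_{\lambda_n}(w)-\TheOtherFixedPoint_{\lambda_n}|\asymp|w-1|^{\RealPart s_n}$ and $\RealPart s_n\asymp\alpha_n^2$, membership in that domain forces $|w-1|\lesssim\exp\bigl(-C\alpha_n^{-2}\log(1/\alpha_n)\bigr)$, which is doubly-exponentially smaller than the scales $|w-1|\sim e^{-\LogGrowthOrder/\alpha_n}$ probed by the tail $\sum_{m>\lis_n}m|b_m|^2$. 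In that intermediate window the ray of angle $0$ is traversing the egg-beater, i.e.\ it is governed by the perturbed Fatou coordinates $\AttractingFatouCoordinates_{\lambda_n},\RepellingFatouCoordinates_{\lambda_n}$ of Theorem~\ref{prop:perturbed fatou coordinates}, not by the single power $(w-1)^{s_n}$. Your asymptotic for the coefficients therefore has no justification precisely on the range $m\in\OpenInterval{\lis_n,\infty}$ where you need it, so the conclusion $\sum_{m>\lis_n}m|b_m|^2\asymp\lis_n^{-2\RealPart s_n}$ cannot be drawn from Step 2 as written. Two secondary but also real problems: the claim that the contributions from the other (infinitely many, dense on $\{|w|=1\}$) singularities ``only add non-negative contributions'' is false for individual coefficients, since they are sums of complex terms that can cancel; and your argument makes no visible use of the hypothesis $\FractionalPart{1/\alpha_n}\TendsTo\EggBeaterPhase$, which is exactly the ingredient the paper needs (through the Lavaurs map) to obtain a constant $C=C(\LogGrowthOrder,\EggBeaterPhase)$ for arbitrarily large $\LogGrowthOrder$; a proof that never invokes it should make you suspicious that the hard part has been skipped.
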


Suppose we are to  compute an approximation of the area of the filled Julia set of a parameter close to $1$.
The above means that,
 in order to see any difference with the area of $\FilledJuliaSet_1$,
 it requires a number of terms which increases at least as fast as the exponential of the inverse of the distance
 of the parameter to $1$.
 
Thanks to the continuity of the straightening map of Mandelbrot-like family
 and to the uniform estimate on area distortion by quasiconformal maps in \cite{Astala1994},
 the theorems \ref{thm:too slow converges to K1} and \ref{thm:the actual main theorem}
 are also true for the cusp and the boundary of the main component of any Mandelbrot-like families
 and in particular of primitve copy of the Mandelbrot set itself.
 
Finally, techniques similar to those used in the proofs allow, with some minor modifications,
 the extension of those results to the root of any hyperbolic components of the Mandelbrot set.
 See, e.g. \cite{Shishikura1998}, section 7.

\begin{figure}
\begin{center}
\includegraphics[width=0.92\textwidth]{./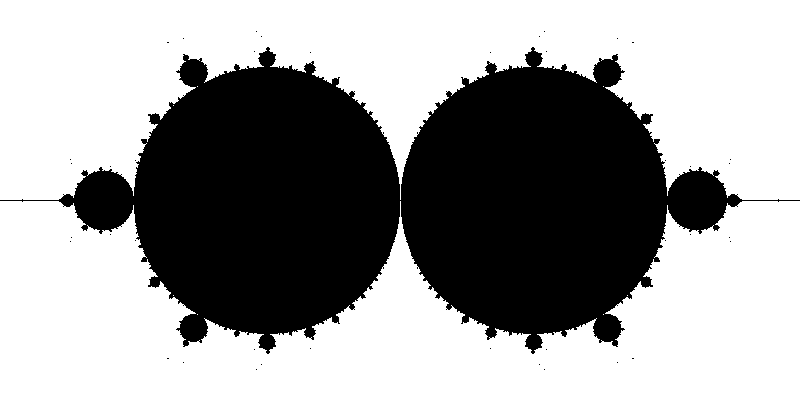}
\end{center}
\caption{\label{fig:the double mandelbrot set}
The double Mandelbrot set.
The big disk on the left is the unit disk. The point of tangency of the two big disks is the point $\lambda=1$.
}
\end{figure}

The theory of perturbed Fatou coordinates (\cite{DouadyHubbard1985}, \cite{Shishikura1998})
plays an essential role in the proof of this theorem.
The proof relies on the fact that, truncating the series amounts essentially to
 compute the area of points that spend a long time in a bounded domain close to the filled Julia set.
For parameters close to the cusp of $\MandelbrotSet$,
 this set can contain many points in the basin of infinity.

The article is organised as follows.
Section \ref{sec:preliminaries}
 contains preliminary materials, including some results based on the bifurcation of parabolic fixed points
 and near parabolic fixed point theory.
Section \ref{sec:convergence properties of the approximation of the area}
 contains the main argument of the proof of theorem \ref{thm:discontinuity of area}
 and theorem \ref{thm:the actual main theorem}.

\subsection*{Acknowledgements} 
I would like to thank Prof. Xavier Buff for his help.
I would also like to thank Prof. Lasse Rempe-Gillen.

\section{Preliminaries}\label{sec:preliminaries}

\subsection{Some notations}

The following introduces some notations and basic facts.
For $r>0$, $\Disk_r$ denotes the disk of centre $0$ and radius $r$.

Let
$\qpl_{\lambda}(z)=\qplf{\lambda}{z}$ and
$\dblmand=\{\lambda\in\cplxp: \julia_\lambda\mbox{ is connected}\}$
be the double Mandelbrot set.

The B\"ottcher isomorphism of $\qpl_{\lambda}$ is denoted by
$\bott_\lambda:\BottcherDomain_\lambda\rightarrow \cplxp\wo \BottcherRangeComplementaryDisk_\lambda$.
It is defined in a neighbourhood $\BottcherDomain_\lambda$ of $\infty$
 in the Riemann sphere $\riesph$ onto the complementary of a closed
 disk $\BottcherRangeComplementaryDisk_\lambda$ with centre at $0$.
Its inverse will be denoted $\bottinv_\lambda$ and
$\green_\lambda$ will denote the Green function.

The notation
$\greencrit{g}=\{\lambda\in\cplxp,\green_\lambda(\critp_\lambda)\leq g\}$
will be used.
Thus $\dblmand=\greencrit{0}$.


For $\lambda\in\dblmand$,
$\GreenFunctionSublevelSet_{\lambda}({g})$ will be the set $\{z:\GreenFunction_\lambda(z)\leq g\}$.
The mapping $g\in\realsnn\mapsto\greensl_\lambda(g)$ is continuous with respect to the Hausdorff metric and
$\greensl_\lambda(0)=\fillj_\lambda$.
%

It is well known that the mapping
$\lambda\mapsto\fillj_\lambda$
is upper semi-continuous.
As a consequence,
$\ds{\limsup_{\lambda\tendsto 1}}\area{K_\lambda}\leq \area{K_1}$
(see also \ref{cor:limsup area of filled jset}).

Given $x\in\SetOfRealNumbers$, let's denote by $\FractionalPart{x}$ the fractional part of $x$,
 that is $\FractionalPart{x}=x-\IntegerPart{x}$
where $\IntegerPart{x}=\max\{n\in\SetOfIntegers:n\leq x\}$ is the integer part of $x$.
We will also need a notation for $\CeilingFunction{x}=\min\{n\in\SetOfIntegers:n\geq x\}$.

\subsection{Estimates on the Green function}

This section contains some elementary yet usefull estimates on the Green function.

\begin{lem}\label{lem:green estimates}
For all $\lambda$ such that $|\lambda|\leq 5$ and all $\xi\in\cplxp$,
\begin{equation*}
\log|\xi| - \log 6 \leq \green_\lambda(\xi) \leq \max\{\log 11, \log|\xi|+\log 11/6\}.
\end{equation*}

\end{lem}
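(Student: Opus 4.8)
The plan is to exploit the functional equation $\green_\lambda(\qpl_\lambda(z)) = 2\green_\lambda(z)$ together with the asymptotic $\green_\lambda(z) = \log|z| + o(1)$ as $z\to\infty$, and then bootstrap from the region where $|z|$ is large down to all of $\cplxp$. First I would fix an escape radius: for $|\lambda|\le 5$ one checks that if $|z|\ge R$ for a suitable absolute constant $R$ (something like $R=3$ or $R=4$ will do, since $|z|^2+|\lambda||z| \le |z|^2(1 + 5/|z|)$ and we want this comfortably bigger than $|z|$ while the lower bound $|z|^2 - 5|z|$ stays above, say, $|z|$), the orbit of $z$ escapes to $\infty$ and $\green_\lambda(z)$ is given by the uniformly convergent telescoping series
\begin{equation*}
\green_\lambda(z) = \log|z| + \sum_{k\ge 0} 2^{-(k+1)}\log\frac{|\qpl_\lambda^{\circ(k+1)}(z)|}{|\qpl_\lambda^{\circ k}(z)|^2}.
\end{equation*}
Each summand is controlled by $\log(1 + 5/|\qpl_\lambda^{\circ k}(z)|)$ in absolute value, and since the moduli grow super-geometrically once past $R$, the total error is bounded by an absolute constant; a direct estimate should give $\big|\green_\lambda(z) - \log|z|\big| \le \log(11/6)$, or whatever explicit constant makes the final inequalities come out, for $|z|\ge R$.

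Next I would use the functional equation to descend. Given an arbitrary $\xi$, if $\green_\lambda(\xi) > \tfrac12\log 11$ (say), then $|\qpl_\lambda^{\circ k}(\xi)|$ eventually exceeds $R$, and writing $\green_\lambda(\xi) = 2^{-m}\green_\lambda(\qpl_\lambda^{\circ m}(\xi))$ for the first such $m$, I can transfer the two-sided bound from the large-modulus region back to $\xi$, tracking how the multiplicative factors $2^{-m}$ and the intermediate moduli interact. For the lower bound $\green_\lambda(\xi) \ge \log|\xi| - \log 6$: when $|\xi|$ is small this is vacuous since the right side is negative and $\green_\lambda \ge 0$; when $|\xi|$ is moderate one uses $|\qpl_\lambda(\xi)| \ge |\xi|^2 - 5|\xi| \ge$ (something) and positivity of $\green$; when $|\xi| \ge R$ it follows from the series estimate above. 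For the upper bound $\green_\lambda(\xi) \le \max\{\log 11, \log|\xi| + \log(11/6)\}$, the key inequality is $|\qpl_\lambda(\xi)| \le |\xi|^2 + 5|\xi| \le |\xi|^2(11/6)^{?}$-type bounds combined with $\green_\lambda(\qpl_\lambda(\xi)) = 2\green_\lambda(\xi)$, run as an induction on the number of iterates needed to reach modulus $R$.

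The only mildly delicate point is pinning down the explicit constants $6$, $11$, $11/6$ so that the descent argument closes without loss — i.e.\ choosing $R$ so that the error series is genuinely below $\log(11/6)$ and so that the "moderate modulus" case (where $|\xi|$ is between $1$ and $R$, and one or two iterations are needed before the clean estimate applies) still respects the stated bounds. This is a finite, explicit check rather than a real obstacle: one picks $R$, bounds the geometric-type tail, and verifies the base case of the induction by the crude inequalities $|\xi|^2 - 5|\xi| \le |\qpl_\lambda(\xi)| \le |\xi|^2 + 5|\xi|$ valid for all $|\lambda|\le 5$. I would present it by first proving the clean estimate for $|\xi|\ge R$, then stating the descent step as a short induction, and finally dispatching the small- and moderate-modulus cases directly.
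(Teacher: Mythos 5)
Your treatment of the region $|\xi|\geq R$ is essentially the paper's argument: for orbits that stay in the escape region, one iterates the two-sided bound $|\xi|^2(1-|\lambda/\xi|)\leq|\qpl_\lambda(\xi)|\leq|\xi|^2(1+|\lambda/\xi|)$ and telescopes to get $\log|\xi|-\log 6\leq\green_\lambda(\xi)\leq\log|\xi|+\log(11/6)$. Where you genuinely diverge from the paper is the region $|\xi|\leq R$: the paper invokes the maximum principle (the Green function is subharmonic, zero on $\FilledJuliaSet_\lambda$, and $\leq\log 11$ on the circle $|\xi|=6$, so $\leq\log 11$ on the whole closed disk), while you propose a dynamical descent via $\green_\lambda(\xi)=2^{-m}\green_\lambda(\qpl_\lambda^{\circ m}(\xi))$ at the first escape time $m$. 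That descent does close, and it is more elementary (no harmonic-function theory), but the maximum-principle route is shorter and requires no bookkeeping.

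There are two concrete problems with the proposal as written, however. First, the escape radius: you suggest $R=3$ or $R=4$ ``will do,'' but for $|\lambda|\leq 5$ the lower bound $|\xi|^2-5|\xi|$ is nonpositive at $|\xi|=3,4,5$, so the orbit of such a $\xi$ need not leave the disk, and the telescoping series is not available. The threshold is $R=6$ (where $6^2-5\cdot 6=6$, so the modulus is nondecreasing), and the constants $6$, $11/6$, $11$ in the statement are tuned to exactly this choice; a smaller $R$ would break the argument, not merely change constants. Second, the moderate-modulus step is left as ``a finite, explicit check'' with the hint ``$|\qpl_\lambda(\xi)|\geq|\xi|^2-5|\xi|\geq$ (something),'' which, as just noted, gives nothing for $1\leq|\xi|\leq 6$. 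What actually makes the descent close is the observation that at the first escape time $m\geq 1$ one has $6\leq|\qpl_\lambda^{\circ m}(\xi)|<6^2+5\cdot 6=66$ (since the modulus at time $m-1$ is $<6$), hence $\green_\lambda(\qpl_\lambda^{\circ m}(\xi))<\log 66+\log(11/6)=2\log 11$, and dividing by $2^m\geq 2$ gives $\green_\lambda(\xi)\leq\log 11$; if the orbit never escapes then $\green_\lambda(\xi)=0$. Also note that the lower bound on $|\xi|\leq 6$ requires no descent at all: $\log|\xi|-\log 6\leq 0\leq\green_\lambda(\xi)$. With $R=6$ and the bound on $|\qpl_\lambda^{\circ m}(\xi)|$ made explicit, your route becomes a complete alternative proof.
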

\begin{proof}
 For $|\xi|\geq 6$, the value of $|\qpl_{\lambda}^{\iter{\lit}}(\xi)|$
 increases and tends to $\infty$ as
 $\lit\tendsto\infty$. For such $\xi$, we have
 $|\xi|^2\left(1+\left|\frac{\lambda}{\xi}\right|\right)
   \leq \qpl_\lambda(\xi) \leq
 |\xi|^2\left(1-\left|\frac{\lambda}{\xi}\right|\right)$.
Hence,
\begin{equation*}
\log|\xi|-\log 6 \left(1-\frac{1}{2^\lit}\right)
 \leq \frac{\log|\qpl_{\lambda}^{\iter{\lit}}(\xi)|}{2^\lit} \leq
 \log|\xi|-\log 11/6 \left(1-\frac{1}{2^\lit}\right).
\end{equation*}
This solve the case $|\xi|\leq 6$.
In the case where $|\xi|\leq 6$, the result follows from maximum principle for harmonic functions.
\end{proof}

\begin{lem}\label{lem:green and iterate}
Let $\escrad>6$, $g>0$ and $\lambda\in\cplxp$ such that $|\lambda|\leq 5$.

If $\lit=\ceilf{\frac{\log\log{11\escrad/6}-\log g}{\log 2}}\geq 0$
 and if $z\in\cplxp$ is such that $|\qpl_{\lambda}^{\iter{\lit}}(z)|\leq \escrad$
 then $|\green_\lambda(z)|\leq g$.
\end{lem}

\begin{proof}
Using lemma \ref{lem:green estimates} with $\xi=\qpl_{\lambda}^{\iter{\lit}}(z)$
 and the fact that $\green_\lambda\circ\qpl_{\lambda} = 2\green_\lambda$,
 we get
$\green_\lambda(z)\leq \frac{1}{2^\lit}\max\left\{\log 11,{\log 11 R
/6}\right\}$.
 The results then follows from the fact that $\escrad>6$ and the definition of $\lit$.
\end{proof}

\subsection{The area formula}

In the present situation, the area formula \eqref{eq:original area formula} can be restated as
\begin{equation}\label{gronwall area formula} 
\area{\greensl_\lambda(\log r)}=\pi\left(r^2-\sum_{k=1}^\infty k\frac{|\bic^{\lambda}_{k}|^2}{r^{2k}}\right),
\end{equation}
where
 $\bottinv_\lambda(w)=w+\ds{\sum_{k=1}^\infty}\frac{\bic^{\lambda}_{k}}{w^k}$, 
 $r\geq 1$
and
 $\lambda\in \greencrit{\log r}=\{\lambda\in\cplxp,\green_\lambda(\critp_\lambda)\leq \log r\}$.

Define, for $\lambda\in\cplxp$, $r>1$ and $\lis\in\SetOfPositiveNaturalNumbers$,
\begin{equation}\label{eq:def of truncated area formula}
\pa(\lambda,r,\lis) = \pi\left(r^2-\sum_{k=1}^\lis\frac{|\bic^{\lambda}_{k}|^2}{r^{2k}}\right)
\end{equation}
and
\begin{equation*}
\pa(\lambda,r,\infty) = \area{\greensl_\lambda(\log r)}.
\end{equation*}

Moreover, the following mappings are continuous:
\begin{itemize}
\item For $\lambda\in \dblmand$, $g\in\realsnn\mapsto \pa(\lambda,e^g,\infty)$,
\item For $g>0$, $\lambda\in\interior{\greencrit{g}}\mapsto\pa(\lambda,e^g,\infty)$,
\item For $\lis\in\SetOfPositiveNaturalNumbers$, $(\lambda,g)\in\cplxp\times\reals \mapsto \pa(\lambda,e^g,\lis)$.
\end{itemize}

The following lemmas will allow us to relate truncated approximations \eqref{eq:def of truncated area formula}
 with sublevel sets of the Green function.

\begin{lem}\label{lem:estim on area sums}
Let $\lis\in\SetOfPositiveNaturalNumbers$, $r>1$ and $\lambda\in\cplxp$ such that $\lambda\in\greencrit{\log r}$, then
\begin{equation*}
 \pa(\lambda,1,\lis)\geq \pi(1-r^{2\lis+2})+r^{2\lis}\pa(\lambda,r,\infty).
\end{equation*}

\end{lem}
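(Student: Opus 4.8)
The plan is to compare the two truncated sums $\pa(\lambda,1,\lis)$ and $\pa(\lambda,r,\lis)$ term by term, and then to control the tail of the full series at radius $r$ using the closed-form Gronwall identity \eqref{gronwall area formula}. Write, for $k\geq 1$, $a_k=|\bic^{\lambda}_k|^2\geq 0$. By definition \eqref{eq:def of truncated area formula},
\begin{equation*}
\pa(\lambda,1,\lis)=\pi\Bigl(1-\sum_{k=1}^{\lis}a_k\Bigr),
\qquad
\pa(\lambda,r,\lis)=\pi\Bigl(r^2-\sum_{k=1}^{\lis}\frac{a_k}{r^{2k}}\Bigr).
\end{equation*}
Since $r>1$, we have $a_k\leq r^{2\lis}\,a_k/r^{2k}$ for every $k$ with $1\leq k\leq \lis$ (because $r^{2k}\leq r^{2\lis}$), hence $\sum_{k=1}^{\lis}a_k\leq r^{2\lis}\sum_{k=1}^{\lis}a_k/r^{2k}$. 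First I would use this to get
\begin{equation*}
\pa(\lambda,1,\lis)=\pi-\pi\sum_{k=1}^{\lis}a_k
\;\geq\;\pi-\pi\,r^{2\lis}\sum_{k=1}^{\lis}\frac{a_k}{r^{2k}}
\;=\;\pi-r^{2\lis}\bigl(\pi r^2-\pa(\lambda,r,\lis)\bigr)
\;=\;\pi\bigl(1-r^{2\lis+2}\bigr)+r^{2\lis}\,\pa(\lambda,r,\lis).
\end{equation*}

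Next I would replace $\pa(\lambda,r,\lis)$ by $\pa(\lambda,r,\infty)$. Since $\lambda\in\greencrit{\log r}$, the Gronwall identity \eqref{gronwall area formula} applies at radius $r$, giving $\pa(\lambda,r,\infty)=\area{\greensl_\lambda(\log r)}=\pi\bigl(r^2-\sum_{k=1}^{\infty}k\,a_k/r^{2k}\bigr)$, so in particular the series $\sum_k k\,a_k/r^{2k}$ converges, and a fortiori $\sum_k a_k/r^{2k}$ converges with nonnegative terms. Therefore the partial sums increase to the full sum, so $\sum_{k=1}^{\lis}a_k/r^{2k}\leq\sum_{k=1}^{\infty}a_k/r^{2k}\leq\sum_{k=1}^{\infty}k\,a_k/r^{2k}$, which yields
\begin{equation*}
\pa(\lambda,r,\lis)=\pi r^2-\pi\sum_{k=1}^{\lis}\frac{a_k}{r^{2k}}
\;\geq\;\pi r^2-\pi\sum_{k=1}^{\infty}k\frac{a_k}{r^{2k}}
\;=\;\pa(\lambda,r,\infty).
\end{equation*}
Combining the two displayed inequalities (and using $r^{2\lis}>0$) gives
\begin{equation*}
\pa(\lambda,1,\lis)\;\geq\;\pi\bigl(1-r^{2\lis+2}\bigr)+r^{2\lis}\,\pa(\lambda,r,\lis)
\;\geq\;\pi\bigl(1-r^{2\lis+2}\bigr)+r^{2\lis}\,\pa(\lambda,r,\infty),
\end{equation*}
which is the claimed estimate.

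There is no serious obstacle here; the only point to be careful about is the hypothesis $\lambda\in\greencrit{\log r}$, which is exactly what guarantees that \eqref{gronwall area formula} is valid at radius $r$ and hence that the relevant series converge with nonnegative terms so that truncation only decreases the subtracted sum. I would state explicitly that $k\,a_k/r^{2k}\geq a_k/r^{2k}\geq 0$ for $k\geq 1$ so that both the dropped tail $\sum_{k>\lis}a_k/r^{2k}$ and the extra weights $(k-1)a_k/r^{2k}$ are nonnegative, justifying the inequality $\sum_{k=1}^{\lis}a_k/r^{2k}\leq\sum_{k=1}^{\infty}k\,a_k/r^{2k}$ used above.
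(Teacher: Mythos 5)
Your proof is correct and follows essentially the same approach as the paper: bound the terms of the truncated sum at radius $1$ by $r^{2\lis}$ times the corresponding terms at radius $r$, then extend to the full series at radius $r$ (which equals the area by the Gronwall identity), using nonnegativity throughout. One small point: the displayed definition \eqref{eq:def of truncated area formula} appears to be missing a factor of $k$ (compare \eqref{eq:intro def area partial sum}, \eqref{gronwall area formula}, and the opening line of the paper's own proof), so you are literally following a typo, but as you note this only drops nonnegative terms and the inequality survives unchanged.
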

\begin{proof}
Recall that $\pa(\lambda,1,\lis)=\pi\left(1-\ds{\sum_{k=1}^\lis}k|\bic^{\lambda}_{k}|^2\right)$.
From 
\begin{equation*}
 \sum_{k=1}^\lis k|\bic^{\lambda}_{k}|^2\leq r^{2\lis}\sum_{k=1}^{\lis}k\frac{|\bic^{\lambda}_{k}|^2}{r^{2k}}
 \leq r^{2\lis}\sum_{k=1}^{\infty}k\frac{|\bic^{\lambda}_{k}|^2}{r^{2k}},
\end{equation*}
it follows that 
$\pa(\lambda,1,\lis)\geq \pi\left(1-r^{2\lis}\ds{\sum_{k=1}^{\infty}}k\frac{|\bic^{\lambda}{k}|^2}{r^{2k}}\right)$,
which is equivalent to the statement of the lemma.
\end{proof}

\begin{lem}\label{lem:key lemma estim approx from below}
 Let $R>6$, $N\in\SetOfPositiveNaturalNumbers$, $r\in\OpenInterval{1,{11R/6}}$ and $\lambda\in\SetOfComplexNumbers$.
 Suppose that $|\lambda|<5$ and $\lambda\in \greencrit{\log r}$.
Define $\lit=\ceilf{\frac{\log\log{11\escrad/6}-\log\log r}{\log 2}}$.

Then $\lit\geq 0$ and
\begin{equation*}
 \pa(\lambda,1,\lis)\geq \pi(1-r^{2\lis+2})+r^{2\lis}\Area\left(\left\{z:|\qpl_\lambda^{\Iterated{\lit}}(z)|\leq R\right\}\right).
\end{equation*}

\end{lem}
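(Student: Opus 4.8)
The plan is to derive this inequality by chaining the two preceding lemmas, Lemma~\ref{lem:estim on area sums} and Lemma~\ref{lem:green and iterate}, applied with the auxiliary value $g=\log r$. First I would dispose of the easy claim $\lit\geq 0$: since $r>1$ we have $\log r>0$, and since $r<11\escrad/6$ we have $0<\log r<\log(11\escrad/6)$, so applying the increasing function $\log$ gives $\log\log r<\log\log(11\escrad/6)$; hence the numerator $\log\log{11\escrad/6}-\log\log r$ in the definition of $\lit$ is strictly positive, and $\lit$, being the ceiling of a positive real, satisfies $\lit\geq 1\geq 0$.

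Next I would apply Lemma~\ref{lem:estim on area sums} with the present $\lis$, $r$ and $\lambda$ --- its hypotheses $r>1$ and $\lambda\in\greencrit{\log r}$ are exactly those assumed here --- to obtain
\begin{equation*}
\pa(\lambda,1,\lis)\geq \pi(1-r^{2\lis+2})+r^{2\lis}\,\pa(\lambda,r,\infty),
\end{equation*}
and recall that by definition $\pa(\lambda,r,\infty)=\area{\greensl_\lambda(\log r)}$, where $\greensl_\lambda(\log r)=\{z:\green_\lambda(z)\leq\log r\}$. The key point is then the set inclusion $\{z:|\qpl_\lambda^{\Iterated{\lit}}(z)|\leq\escrad\}\subseteq\greensl_\lambda(\log r)$, which is precisely what Lemma~\ref{lem:green and iterate} gives with $g=\log r$: its hypotheses $\escrad>6$, $g>0$ and $|\lambda|\leq 5$ hold (the last because $|\lambda|<5$), the exponent $\ceilf{\frac{\log\log{11\escrad/6}-\log g}{\log 2}}$ it produces coincides with the $\lit$ defined here, and $\lit\geq 0$ was just checked. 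By monotonicity of Lebesgue measure this inclusion yields $\Area\bigl(\{z:|\qpl_\lambda^{\Iterated{\lit}}(z)|\leq\escrad\}\bigr)\leq\area{\greensl_\lambda(\log r)}=\pa(\lambda,r,\infty)$, and since $r^{2\lis}>0$, substituting into the displayed inequality above gives exactly the assertion of the lemma.

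I do not expect any genuine obstacle: the substance of the statement is contained in the two cited lemmas, and what remains is only the bookkeeping of checking that their numerical hypotheses are met --- in particular that the restriction $r<11\escrad/6$ is exactly what forces $\lit\geq 0$, and that the two formulas defining $\lit$ agree once $g$ is set equal to $\log r$.
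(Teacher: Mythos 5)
Your proof is correct and takes precisely the route the paper intends: the author's own proof is the single sentence that the lemma ``is a direct consequence of lemmas \ref{lem:green and iterate} and \ref{lem:estim on area sums}.'' You have simply filled in the expected bookkeeping (the sign of $\lit$, the substitution $g=\log r$, and the inclusion of sublevel sets), all of which checks out.
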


\begin{proof}
 This lemma is a direct consequence of lemmas \ref{lem:green and iterate} and \ref{lem:estim on area sums}.
\end{proof}

\begin{lem}\label{lem:upper bound for area approximation}
Let $\lis\in\SetOfPositiveNaturalNumbers$, $r>1$ and $\lambda\in\greencrit{\log r}$. Then,
\begin{equation}
 \pa(\lambda,1,\AreaFormulaPartialSumUpperBound)
  \leq
 \Area\left({\GreenFunctionSublevelSet_{\log r}}\right)+\pi r^{2\AreaFormulaPartialSumUpperBound+2}.
\end{equation}
\end{lem}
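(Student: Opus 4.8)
The plan is to compare the truncated sum at radius $1$ directly with the exact Gronwall formula \eqref{gronwall area formula} at radius $r$, and to absorb the discrepancy into the (non-negative) tail of the Laurent coefficients of $\bottinv_\lambda$.

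First I would make both sides explicit. By definition \eqref{eq:def of truncated area formula}, $\pa(\lambda,1,\lis)=\pi\bigl(1-\sum_{k=1}^{\lis}k|\bic^{\lambda}_{k}|^{2}\bigr)$; and since $r>1$ and $\lambda\in\greencrit{\log r}$, the hypotheses of \eqref{gronwall area formula} are met, so $\area{\greensl_\lambda(\log r)}=\pi\bigl(r^{2}-\sum_{k=1}^{\infty}k|\bic^{\lambda}_{k}|^{2}r^{-2k}\bigr)$, the series converging absolutely. Subtracting, and writing $\sum_{k=1}^{\infty}=\sum_{k=1}^{\lis}+\sum_{k=\lis+1}^{\infty}$ in the second formula so as to group the common indices, one obtains $\pa(\lambda,1,\lis)-\area{\greensl_\lambda(\log r)}=\pi(1-r^{2})-\pi\sum_{k=1}^{\lis}k|\bic^{\lambda}_{k}|^{2}\bigl(1-r^{-2k}\bigr)+\pi\sum_{k=\lis+1}^{\infty}k|\bic^{\lambda}_{k}|^{2}r^{-2k}$.

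The estimate of the three pieces is then elementary. Since $r>1$, every factor $1-r^{-2k}$ with $k\geq 1$ is non-negative, so the middle sum is $\geq 0$ and may simply be discarded when bounding the difference from above. For the tail, each summand $k|\bic^{\lambda}_{k}|^{2}r^{-2k}$ is non-negative, so the tail is at most the full series $\sum_{k=1}^{\infty}k|\bic^{\lambda}_{k}|^{2}r^{-2k}$, which by \eqref{gronwall area formula} equals $r^{2}-\tfrac{1}{\pi}\area{\greensl_\lambda(\log r)}\leq r^{2}$ because areas are non-negative. Hence $\pa(\lambda,1,\lis)-\area{\greensl_\lambda(\log r)}\leq\pi(1-r^{2})+\pi r^{2}=\pi$, and since $r>1$ and $\lis\geq 1$ give $\pi\leq\pi r^{2\lis+2}$, the claimed inequality follows. (In fact this yields the sharper bound $\pa(\lambda,1,\lis)\leq\area{\greensl_\lambda(\log r)}+\pi$; the weaker form stated, with $\pi r^{2\lis+2}$, is all that is used later and matches the shape of Lemma \ref{lem:estim on area sums}.)

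There is no genuine obstacle here: the only points requiring care are checking that $r>1$ together with $\lambda\in\greencrit{\log r}$ is exactly what licenses the use of \eqref{gronwall area formula} and the convergence of its series, and that the inequality $\area{\greensl_\lambda(\log r)}\geq 0$ is applied legitimately. Everything else is a term-by-term comparison of absolutely convergent series with non-negative terms.
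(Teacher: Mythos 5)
Your proof is correct and follows essentially the same outline as the paper's: compare $\pa(\lambda,1,\lis)=\pi\bigl(1-\sum_{k\leq\lis}k|\bic^{\lambda}_{k}|^{2}\bigr)$ with the Gronwall expansion \eqref{gronwall area formula} at radius $r$, observe that the ``middle'' discrepancy $\pi\sum_{k\leq\lis}k|\bic^\lambda_k|^2\bigl(1-r^{-2k}\bigr)$ is non-negative and can be dropped, and then estimate the tail. The one genuine divergence is in the tail bound. The paper estimates $\sum_{k>\lis}k|\bic^\lambda_k|^2 r^{-2k}$ by $r^{-2(\lis+1)}$ using the inequality $\sum_{k\geq 1}k|\bic^\lambda_k|^2\leq 1$, whereas you bound the tail by the full series, which by \eqref{gronwall area formula} and $\Area\geq 0$ is at most $r^2$, giving $\pa\leq\area{\greensl_\lambda(\log r)}+\pi\leq\area{\greensl_\lambda(\log r)}+\pi r^{2\lis+2}$. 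Your constant is coarser numerically ($\pi$ rather than $\pi r^{-2\lis-2}$), but it has a real advantage: it uses only the stated hypothesis $\lambda\in\greencrit{\log r}$, via non-negativity of the sublevel area. The inequality $\sum_{k\geq 1}k|\bic^\lambda_k|^2\leq 1$ invoked by the paper is the area theorem at radius $1$ and requires $\lambda\in\dblmand$; for $\lambda\in\greencrit{\log r}\wo\dblmand$ the map $\bottinv_\lambda$ is univalent only outside the disk of radius $e^{\green_\lambda(\critp_\lambda)}>1$, and the area theorem there gives only $\sum_k k|\bic^\lambda_k|^2\rho^{-2k}\leq\rho^2$ for $\rho$ in that range, not the bound at $\rho=1$. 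So your argument, though yielding a less precise numerical constant, covers the full generality of the lemma as stated and quietly closes a small gap in the written proof.
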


\begin{proof}
Indeed, for any $r>1$,
\begin{equation*}
\pi\left(1-\sum_{k=1}^{\lis}k|\bic^{\lambda}_{k}|^2\right)
 \leq
\pi\left(r-\sum_{k=1}^{\lis}\frac{k|\bic^{\lambda}_{k}|^2}{r^{2k}}\right). 
\end{equation*}
But, since $\ds{\sum_{k=1}^\infty} k|b_k|^2\leq 1$, 
\begin{equation*}
 \sum_{k\geq \lis +1}\frac{k|\bic^{\lambda}_{k}|^2}{r^{2k}}\leq \frac{1}{r^{2(\lis+1)}}.
\end{equation*}
\end{proof}

\subsection{Bifurcation of parabolic fixed points}

We recall some results that appear in \cite{DouadyHubbard1985}, \cite{Douady1994} and \cite{Shishikura1998}.
A good introduction to the classical part of the theory can also be found in  \cite{Zinsmeister1997}.
See also \cite{BuffCheritat2012} for the Inou-Shishikura part.

\subsubsection{Convergence with domain}\label{sec:cv with domain}

We consider families of analytic maps $\ph_\lambda:\domainof{\ph_\lambda}\rightarrow\cplxp$
 defined on some open subsets  $\domainof{\ph_\lambda}$ of a Riemann surface $S$,
 where $\lambda$ belongs to a subset $\Lambda$ of a 
 metric topological space.
 Let $\lambda_0$ be in the closure of $\Lambda$.

Let $\psi:\domainof{\psi}\rightarrow\SetOfComplexNumbers$ be a holomorphic mapping
 and let $U$ an open subset of the domain $\domainof{\psi}$ of the mapping $\psi$.

In what follow we will say that
 a family  of mappings $\ph_\lambda:\domainof{\ph_\lambda}\rightarrow\cplxp$
 converge to $\psi$ on $U$ when $\lambda\tendsto \lambda_0$
 if, for all compact subset $K$ of $U$,
 there is a neighbourhood $N$ of $\lambda_0$ in $\Lambda\cup\{\lambda_0\}$
 such that, for all $\lambda\in N\wo\{\lambda_0\}$, $K\subset \domainof{\ph_\lambda}$,
 and $\ph_\lambda$ converges uniformly on $K$ to $\psi$ when $\lambda\tendsto\lambda_0$ inside $N\wo\{\lambda_0\}$,
see also \cite{Shishikura1998}.

Note that, if $(\ph_\lambda)_\lambda$ converges to a non constant $\psi$ on $U$ as $\lambda\tendsto\lambda_0$,
 then  for any non empty open subset $V\subset U$,
 and for $\lambda$ close enough to $\lambda_0$ (depending on $V$),
 the intersection $\psi(V)\cap \ph_\lambda(V)$
 has non empty interior.

We will use the following elementary lemma in conjunction with the above local uniform convergence.
\begin{lem}\label{lem:cv lemma}
 Let $X$ be a compact metric space and $Y$ a metric space.
 Let $f:X\rightarrow Y$ be a continuous mapping. Suppose that there is a compact $K$ of $Y$
 with non empty interior, such that $f(X)\subset K$
 and denote by $\Distance$ the distance on $Y$.

 Then, there exists an $\eps_0>0$ such that, for all
 $\eps\in\OpenInterval{0,\eps_0}$, there is a compact set $X_\eps$ with non empty interior such that
 for any mapping $g:X\rightarrow Y$ such that $\sup \{\Distance (g(x),f(x)), x\in X\}\leq \eps$,
 we have $g(X_\eps)\subset K$.
\end{lem}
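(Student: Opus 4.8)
The plan is to exploit the compactness of $f(X)$ together with the openness of the interior of $K$ to produce a definite ``safety margin'' which then can be eroded by a uniform perturbation of size at most $\eps$. First I would observe that $f(X)$ is a compact subset of $Y$ contained in $K$; replacing $K$ by the closure of its interior (which still contains $f(X)$ up to a harmless shrinking argument, but here it is cleaner to work directly) I would instead consider, for each $\eps>0$, the ``inner $\eps$-core''
\begin{equation*}
  K_\eps=\{y\in K : \Distance(y,Y\SetComplement \Interior{K})\geq \eps\}.
\end{equation*}
This is a closed subset of the compact set $K$, hence compact. The key point is that, since $f(X)$ is compact and contained in the open set $\Interior{K}$, the function $y\mapsto \Distance(y,Y\SetComplement\Interior{K})$ is continuous and strictly positive on $f(X)$, so it attains a positive minimum; call it $2\eps_0$. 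Then for every $\eps\in\OpenInterval{0,\eps_0}$ we have $f(X)\subset K_\eps$, and moreover $K_\eps$ has nonempty interior (it contains, for instance, a metric ball of radius $\eps_0-\eps$ around any point of $f(X)$, which is nonempty).

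Next I would set $X_\eps=f^{-1}(K_\eps)$. Since $f$ is continuous and $K_\eps$ is closed, $X_\eps$ is closed in the compact space $X$, hence compact. Because $f(X)\subset K_\eps$ we in fact have $X_\eps=X$, which is compact with nonempty interior (as $X$ is a compact metric space taken as the ambient space, $\Interior{X}=X\neq\EmptySet$ provided $X$ is nonempty — if one wants to be careful about the degenerate case $X=\EmptySet$ the statement is vacuous, so assume $X\neq\EmptySet$). Then, for any $g:X\rightarrow Y$ with $\sup\{\Distance(g(x),f(x)):x\in X\}\leq\eps$, and any $x\in X_\eps=X$, the triangle inequality gives
\begin{equation*}
  \Distance\bigl(g(x),Y\SetComplement\Interior{K}\bigr)\geq \Distance\bigl(f(x),Y\SetComplement\Interior{K}\bigr)-\Distance\bigl(g(x),f(x)\bigr)\geq 2\eps_0-\eps>0,
\end{equation*}
so $g(x)\in\Interior{K}\subset K$. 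Hence $g(X_\eps)\subset K$, as required.

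The only genuinely delicate point is the verification that $K_\eps$ (equivalently $X_\eps$) has nonempty interior: this is where the hypothesis that $K$ has nonempty interior, rather than merely $f(X)\subset K$, is used, and it is what makes the lemma useful downstream when combined with the local uniform convergence of perturbed Fatou coordinates — one wants a set of definite ``size'' on which the perturbed objects still land inside the target region. Everything else is a routine application of compactness and the triangle inequality. I should also remark that there is some freedom in the definition of $X_\eps$: the above gives $X_\eps=X$, but in applications one typically wants $X_\eps$ to be a proper subset that is robust; one can equally take $X_\eps=f^{-1}(K_{2\eps})$ or intersect with a fixed compact exhaustion of $\Interior{X}$, and the same estimate goes through with $\eps_0$ replaced by $\eps_0/2$.
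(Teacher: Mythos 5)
Your proof hinges on the claim that ``since $f(X)$ is compact and contained in the open set $\Interior{K}$, the function $y\mapsto \Distance(y,Y\SetComplement\Interior{K})$ \dots attains a positive minimum $2\eps_0$.'' But the hypothesis is only $f(X)\subset K$, not $f(X)\subset\Interior{K}$. If $f(X)$ meets $\TopologicalBoundary{K}$ (which is allowed, and is exactly the delicate case the lemma needs to handle), then this minimum is $0$, your $\eps_0$ is $0$, and the argument collapses. As a symptom of this, you end up with $X_\eps = X$, i.e.\ the conclusion that \emph{every} uniformly $\eps$-close $g$ maps \emph{all} of $X$ into $K$ --- which is simply false when $f(X)$ touches $\TopologicalBoundary{K}$, no matter how small $\eps>0$ is. Your parenthetical at the start, ``replacing $K$ by the closure of its interior \dots up to a harmless shrinking argument,'' is waving at the real difficulty but does not actually resolve it, since $f(X)$ need not lie in the closure of $\Interior{K}$ at a positive distance from its boundary either.

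The paper's proof is structured precisely to avoid this: it defines $\eps_0$ intrinsically in terms of $K$ alone, as the supremum of those $\eps$ for which $K_\eps=\{y\in K:\Distance(y,\TopologicalBoundary{K})\geq\eps\}$ has non-empty interior (this is positive because $K$ has non-empty interior, with no reference to $f$), and then, crucially, takes $X_\eps=f^{-1}(K_{\eps'})$ for some $\eps'\in\OpenInterval{\eps,\eps_0}$, which is a \emph{proper} subset of $X$ in general, discarding the points of $X$ that $f$ sends too close to $\TopologicalBoundary{K}$. Your closing remark that one could ``equally take $X_\eps=f^{-1}(K_{2\eps})$'' is therefore not a matter of stylistic freedom: it is the essential move, and your argument only goes through once you make it. (One should also be aware that the paper's own one-line justification that $X_\eps$ has non-empty interior is terse --- it tacitly uses that $f(X)$ meets $\Interior{K_{\eps'}}$ --- but at least the construction it proposes is the correct one; yours, with $X_\eps=X$, is not.)
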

\begin{proof}
For $\eps>0$, denote by $K_\eps$ the compact set $\{y\in K:\Distance(y,\Boundary K)\geq \eps\}$. 
 Let $\eps_0$ be the supremum of $\eps>0$ for which the set $K_\eps$ has non empty interior.

Let $\eps\in\OpenInterval{0,\eps_0}$. Pick $\eps'\in\OpenInterval{\eps,\eps_0}$ and let $X_\eps = f^{-1}(K_{\eps'})$.
 Since $f$ is continuous, $X_\eps$ has non empty interior.

Moreover, if $g:X_\eps\rightarrow Y$ is such that 
 $\sup \{\Distance (g(x),f(x)), x\in X_\eps\}\leq \eps$,
 then, for all $x\in X_\eps$, $\Distance(g(x),K_\eps')\leq \eps$.
 But if $x\in X_\eps$ were such that $g(x)\notin K$, we would have $\Distance(g(x),K_\eps')\geq\eps'>\eps$.
\end{proof}

\subsubsection{Fatou coordinates for parabolic and near parabolic mappings}

In this section we recall some important well known results on Fatou coordinates.

The following theorem is classical.
The bulk of these results can be found in \cite{Shishikura2000}.

In the present section, unless otherwise specified, phrases such as ``open sets'', ``Jordan domains'', etc. will refer to
 subsets of the complex plane.

\begin{thm}[Extended fatou coordinates]\label{prop:extended fatou coordinates}

 Let $\qpl_{1}(z)=z+z^2$,
 and let $\fillj_1$ be its filled Julia set.

Then, there exist
 an open set
 $\BPFPDomainOfExtendedRepellingFatouCoordinates_1$,
 Jordan domains
 $\BPFPAttractingPetal_1$,
 $\BPFPRepellingPetal_1$,
 points
 $\BPFPAttractingPetalReferencePoint$,
 $\BPFPRepellingPetalReferencePoint$,
 a simply connected domain
 $\BPFPUnionOfAllPetals_1$,
 holomorphic mappings
 $\AttractingFatouCoordinates_1:\Interior{\fillj_1}\rightarrow \SetOfComplexNumbers$,
 $\RepellingFatouCoordinates_1:\BPFPDomainOfExtendedRepellingFatouCoordinates_1\rightarrow \SetOfComplexNumbers$,
 $\InverseRepellingFatouCoordinates_1:\SetOfComplexNumbers\rightarrow \SetOfComplexNumbers$,
 simple arcs
 $\BPFPAttractingArc_1$,
 $\BPFPRepellingArc_1$,
 and
 closed Jordan domains
 $\BPFPAttractingCrescent_1$ and
 $\BPFPRepellingCrescent_1$,
 satisfying the following properties.

\begin{enumerate}
\item \textit{(Petals)}
 \begin{enumerate}
\item $\BPFPAttractingPetalReferencePoint\in \BPFPAttractingPetal_1$,\label{item:attracting petal ref point}
\item $\BPFPRepellingPetalReferencePoint\in\BPFPRepellingPetal_1$ and $\qpl_1(\BPFPRepellingPetalReferencePoint)\in\BPFPRepellingPetal_1$,
\label{item:repelling petal ref point}
 \item $\BPFPAttractingPetal_1\subset \Interior{\fillj_1}$, \label{item:the attracting petal is in the filled jset}
 \item $\BPFPUnionOfAllPetals_1=\BPFPAttractingPetal_1\cup\BPFPRepellingPetal_1\cup\{0\}$ is a neighbourhood of $0$,
\label{item:union of petals is neigh of 0}
 \item $\BPFPDomainOfExtendedRepellingFatouCoordinates_1=\ds{\bigcup_{n\geq 0}}\qpl_1^{-n}(\BPFPRepellingPetal_1)$.
 \end{enumerate}

\item \textit{(Properties of the mapping on the petals)} 
\begin{enumerate}
\item The mapping $\qpl_1$  is univalent on $\BPFPUnionOfAllPetals_1$,\label{item:qpl univ on petal}
\item $\qpl_1\left(\BPFPAttractingPetal_1\right)\subset \BPFPAttractingPetal_1$,\label{item:attracting petal invariant}
\item $\BPFPRepellingPetal_1\subset \qpl_1\left(\BPFPRepellingPetal_1\right)$, \label{item:repelling petal pre-invariant}
\item For all $z\in\Interior{\FilledJuliaSet_1}$, there is an $n\in\SetOfNaturalNumbers$ such that
 $\qpl_1^{\Iterated{n}}(z)\in\BPFPAttractingPetal_1$. \label{item:all orbits pass through attracting petal}
\end{enumerate}

\item \textit{(Fatou coordinates)}
\begin{enumerate}
\item The mapping $\AttractingFatouCoordinates_1$ is univalent on $\BPFPAttractingPetal_1$,\label{item:attracting fatou univalent}
\item The mapping $\RepellingFatouCoordinates_1$ is univalent on $\BPFPRepellingPetal_1$,\label{item:repelling fatou univalent}
\item The mapping
 $\InverseRepellingFatouCoordinates_1:
\SetOfComplexNumbers\rightarrow \InverseRepellingFatouCoordinates_1\left(\SetOfComplexNumbers\right)$
 coincides with
 the inverse of $\RepellingFatouCoordinates_1$
 on $\RepellingFatouCoordinates_1(\BPFPRepellingPetal_1)\cap\SetOfComplexNumbers$,
\item $\AttractingFatouCoordinates_1(\FilledJuliaSet_1)=\SetOfComplexNumbers$,\label{item:attracting fatou onto}
\item $\InverseRepellingFatouCoordinates_1(\SetOfComplexNumbers)=\SetOfComplexNumbers$.
\end{enumerate}

\item \textit{(Semi-conjugacy)} \label{item:fatou semi-conjugacy}
\begin{enumerate}
\item $\AttractingFatouCoordinates_1\ComposedWith\qpl_1=\AttractingFatouCoordinates_1+1$,
\item $\RepellingFatouCoordinates_1\ComposedWith\qpl_1=\RepellingFatouCoordinates_1+1$
 on $\qpl_1^{-1}\left(\BPFPRepellingPetal_1\right)\cap\BPFPRepellingPetal_1$.
\end{enumerate}

\item \textit{(Normalisation)}
\begin{enumerate}
\item $\AttractingFatouCoordinates_1(z)-\RepellingFatouCoordinates_1(z)\TendsTo 0$ as $z\TendsTo 0$
 with $z\in \BPFPAttractingPetal_1\cap \BPFPRepellingPetal_1$
 and $\ImaginaryPart{-1/z}\TendsTo+\infty$.\label{item:fatou repelling normalisation}
\end{enumerate}

\item \textit{(Crescents)} \label{item:crescents definition}
\begin{enumerate}
\item $\BPFPAttractingCrescent_1=
  \left(\AttractingFatouCoordinates_1\right)^{-1}\left(\{w:
      \RealPart{w}\in\ClosedInterval{\RealPart{\AttractingFatouCoordinates_1(\BPFPAttractingPetalReferencePoint)},
                                     \RealPart{\AttractingFatouCoordinates_1(\BPFPAttractingPetalReferencePoint)}+1}
  \}\right)$,\label{item:attracting crescent def}
\item $\BPFPRepellingCrescent_1=\InverseRepellingFatouCoordinates_1\left(\{w:\RealPart{w}\in
          \ClosedInterval{\RealPart\RepellingFatouCoordinates_1(\BPFPRepellingPetalReferencePoint),
                          \RealPart\RepellingFatouCoordinates_1(\BPFPRepellingPetalReferencePoint)+1}
      \}\right)$,\label{item:repelling crescent def}
\item $\BPFPAttractingCrescent_1 \subset \BPFPAttractingPetal_1\cup\{0\}$,
\item $\BPFPRepellingCrescent_1 \subset \BPFPRepellingPetal_1\cup\{0\}$, 
\item The arcs $\BPFPAttractingArc_1$ and $\BPFPRepellingArc_1$ join $0$ to itself,
\item The closed domain $\BPFPAttractingCrescent_1$ is bounded by $\BPFPAttractingArc_1$ and its image $\qpl_1\left(\BPFPAttractingArc_1\right)$,
\item The closed domain $\BPFPRepellingCrescent_1$ is bounded by $\BPFPRepellingArc_1$ and its image $\qpl_1\left(\BPFPRepellingArc_1\right)$,
\item $\BPFPAttractingCrescent_1\cap\BPFPRepellingCrescent_1=\{0\}$.

\end{enumerate}

\end{enumerate}

\end{thm}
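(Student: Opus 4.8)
The statement is classical, so the plan is to assemble it from the Leau--Fatou flower theorem and the standard construction of Fatou coordinates for the simple parabolic germ (see \cite{Shishikura2000}, \cite{B-Milnor2006}), realising the petals, arcs and crescents as pull-backs of explicit regions in a linearising coordinate. The point $0$ is a parabolic fixed point of $Q_1(z)=z+z^2$ of multiplier $1$ with $Q_1(z)-z=z^2$ vanishing to order $2$, so there is exactly one attracting and one repelling petal, with attracting direction $-1$ and repelling direction $+1$. I would work in the coordinate $w=-1/z$, in which $Q_1$ becomes $F(w)=w^2/(w-1)=w+1+\frac1{w-1}$. Fix $M$ large and set $\mathcal P^a_M=\{\operatorname{Re}w>M-|\operatorname{Im}w|\}$ and $\mathcal P^r_M=\{\operatorname{Re}w<-M+|\operatorname{Im}w|\}$. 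A direct computation gives: $F$ maps $\mathcal P^a_M$ univalently into itself; the branch of $F^{-1}$ defined near $\infty$ (asymptotic to $w\mapsto w-1$) maps $\mathcal P^r_M$ univalently into itself; $\mathcal P^a_M\cup\mathcal P^r_M\cup\{\infty\}$ is a simply connected neighbourhood of $\infty$ in $\widehat{\mathbb C}$ on which $F$ is univalent; and $\mathcal P^a_M\cap\mathcal P^r_M$ is the union of two ``sepals'', characterised by $\operatorname{Im}w>M$ and by $\operatorname{Im}w<-M$. Pulling back by $z=-1/w$ I take $\Omega^a_1,\Omega^r_1$ to be the images of $\mathcal P^a_M,\mathcal P^r_M$ --- bounded Jordan domains with $0$ on their boundary, with $\Omega^a_1$ contained in the immediate parabolic basin hence in $\operatorname{int}K_1$ --- and $\Omega_1=\Omega^a_1\cup\Omega^r_1\cup\{0\}$, a simply connected neighbourhood of $0$ on which $Q_1$ is univalent (as $Q_1'(0)=1$). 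Choosing reference points $z^a\in\Omega^a_1$, $z^r\in\Omega^r_1$ deep in the petals with $Q_1(z^r)\in\Omega^r_1$ secures all of the petal items and the invariance and univalence statements for $Q_1$ on the petals.

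For the Fatou coordinates, on $\mathcal P^a_M$ set $\Phi^a(w)=\lim_{n\to\infty}\bigl(F^{\circ n}(w)-n-\log n\bigr)$; from $F(w)-w-1\sim 1/w$ one gets $F^{\circ n}(w)=w+n+\log n+h(w)+o(1)$, so the limit exists, is univalent on $\mathcal P^a_M$ with image containing a right half-plane, and satisfies $\Phi^a\circ F=\Phi^a+1$. Transporting back, $\Phi^a_1$ is univalent on $\Omega^a_1$ with $\Phi^a_1\circ Q_1=\Phi^a_1+1$. Since $Q_1$ is affinely conjugate to the cauliflower map $z^2+1/4$, whose filled Julia set is a closed Jordan disc, $\operatorname{int}K_1$ is connected and equals the immediate parabolic basin of $0$, so every $z\in\operatorname{int}K_1$ satisfies $Q_1^{\circ n}(z)\in\Omega^a_1$ for $n$ large; then $\Phi^a_1(z):=\Phi^a_1(Q_1^{\circ n}(z))-n$ is a well defined holomorphic extension $\Phi^a_1:\operatorname{int}K_1\to\mathbb C$, onto because $\Phi^a_1(\Omega^a_1)$ contains a right half-plane and $Q_1(\operatorname{int}K_1)=\operatorname{int}K_1$. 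Symmetrically, the repelling coordinate $\Phi^r$ is built on $\mathcal P^r_M$ using the inverse branch, giving $\Phi^r_1$ univalent on $\Omega^r_1$ with $\Phi^r_1\circ Q_1=\Phi^r_1+1$ wherever both sides are defined; it extends to $B^r_1:=\bigcup_{n\ge 0}Q_1^{-n}(\Omega^r_1)$ by $\Phi^r_1(z):=\Phi^r_1(Q_1^{\circ n}(z))-n$ on $Q_1^{-n}(\Omega^r_1)$, consistently by the functional equation. Its inverse $(\Phi^r_1)^{-1}$ is defined on $\Phi^r_1(\Omega^r_1)$, which contains a left half-plane $\{\operatorname{Re}w<-M'\}$, and I extend it to an entire map $\varphi_1:\mathbb C\to\mathbb C$ by $\varphi_1(w):=Q_1^{\circ k}\bigl((\Phi^r_1)^{-1}(w-k)\bigr)$ for any $k$ with $\operatorname{Re}(w-k)<-M'$ --- well defined because $Q_1\bigl((\Phi^r_1)^{-1}(u)\bigr)=(\Phi^r_1)^{-1}(u+1)$ on that half-plane, entire because $Q_1$ is a polynomial. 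Then $\varphi_1(w+1)=Q_1(\varphi_1(w))$, $\varphi_1$ agrees with $(\Phi^r_1)^{-1}$ on $\Phi^r_1(\Omega^r_1)\cap\mathbb C$, and its range is open, forward-invariant ($Q_1(\operatorname{range})=\operatorname{range}$), and meets the Julia set (the repelling petal straddles $\partial K_1$), hence equals $\mathbb C$ by Montel's theorem. This yields all the Fatou-coordinate and semi-conjugacy items.

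For the normalisation, each coordinate is determined only up to an additive constant and, transported to the $w$-plane, both have the shape $w-\log w+\mathrm{const}+o(1)$ in the upper sepal; the $F$-invariant difference of the two therefore tends to a constant $\kappa_+$ along the upper sepal, and subtracting $\kappa_+$ from $\Phi^r_1$ gives exactly $\Phi^a_1(z)-\Phi^r_1(z)\to 0$ as $z\to0$ in $\Omega^a_1\cap\Omega^r_1$ with $\operatorname{Im}(-1/z)\to+\infty$, i.e.\ the normalisation item. Finally I define the crescents $S^a_1,S^r_1$ as the closures of $(\Phi^a_1)^{-1}$, resp.\ $\varphi_1$, of the closed vertical strips of width $1$ whose left edges are $\{\operatorname{Re}w=\operatorname{Re}\Phi^a_1(z^a)\}$, resp.\ $\{\operatorname{Re}w=\operatorname{Re}\Phi^r_1(z^r)\}$. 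Since $z^a,z^r$ lie deep in the petals, the corresponding full vertical lines lie in $\Phi^a_1(\Omega^a_1)$, resp.\ $\Phi^r_1(\Omega^r_1)$, so their preimages are simple arcs $\ell^a_1,\ell^r_1$ that converge to $0$ as $\operatorname{Im}w\to\pm\infty$ (because $(\Phi^a_1)^{-1}(w)\to 0$ as $w\to\infty$), hence join $0$ to itself; together with their images $Q_1(\ell^a_1),Q_1(\ell^r_1)$ (the functional equation carries one edge of the strip to the next) they bound the closed Jordan domains $S^a_1,S^r_1$. As the attracting crescent sits near the direction $\pi$ at $0$ and the repelling one near the direction $0$, choosing $M$ large gives $S^a_1\cap S^r_1=\{0\}$, $S^a_1\subset\Omega^a_1\cup\{0\}$ and $S^r_1\subset\Omega^r_1\cup\{0\}$, which is the crescent item.

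The main obstacle is organisational rather than analytic: one must arrange, simultaneously and compatibly, petals that are genuine Jordan domains whose union with $0$ is a simply connected neighbourhood of $0$, \emph{and} unit-width crescents that are Jordan domains meeting only at $0$, with boundary arcs that really close up at $0$ --- this is exactly what passing to the coordinate $w=-1/z$ with the explicit wedges $\mathcal P^a_M,\mathcal P^r_M$ is meant to make transparent. The second delicate input is the identification of $\operatorname{int}K_1$ with the connected immediate parabolic basin, on which both the extension and the surjectivity of $\Phi^a_1$ rest; this comes from the affine conjugacy with $z^2+1/4$ and the fact that the cauliflower's filled Julia set is a closed Jordan disc.
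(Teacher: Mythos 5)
Your proposal is correct, and at bottom it runs along the same classical Leau--Fatou/Douady--Shishikura lines as the paper's proof; the difference is mainly one of packaging, plus two points where your justification is genuinely different. Where the paper obtains the petals, the Fatou coordinates, their extensions and the crescents by citation to \cite{Shishikura2000} (propositions 2.21 and 3.2.3, sections 4.2.2--4.2.3), you rebuild them explicitly in the chart $w=-1/z$ with the wedges $\mathcal{P}^{a}_M$, $\mathcal{P}^{r}_M$ and the limit formula for the attracting coordinate; this buys self-containedness at the cost of re-proving standard material. The two genuinely different steps are these: for the statement that every orbit in $\Interior{\fillj_1}$ enters the attracting petal, you identify $\Interior{\fillj_1}$ with the parabolic basin via the affine conjugacy with $z^2+1/4$ and use the asymptotics $-1/\qpl_1^{\Iterated{n}}(z)\sim n$ of basin orbits, where the paper argues via orthogonality of vertical lines to orbits; and for $\InverseRepellingFatouCoordinates_1(\SetOfComplexNumbers)=\SetOfComplexNumbers$ you give a Montel/blow-up argument (range open, $\qpl_1$-invariant, meeting $\JuliaSet_1$), which is sturdier than the paper's one-line remark --- just add the observation that $\qpl_1$ has no finite exceptional point (its critical point $-1/2$ is not fixed), so no finite value can be omitted. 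Two glosses you share with the paper are worth flagging, though neither harms the results: the crescent in the item defining $\BPFPAttractingCrescent_1$ must be read as the preimage under the restriction of $\AttractingFatouCoordinates_1$ to $\BPFPAttractingPetal_1$ (under the globally extended coordinate $z$ and $-1-z$ have equal images, so the full preimage is not contained in $\BPFPAttractingPetal_1\cup\{0\}$), which is the reading you in fact use; and single-valuedness of the extension of $\RepellingFatouCoordinates_1$ to $\bigcup_{n\geq 0}\qpl_1^{-n}(\BPFPRepellingPetal_1)$ needs more than ``consistency by the functional equation'', since an orbit can leave $\BPFPRepellingPetal_1$ and return later; the paper glosses this by the same citation, and nothing in the theorem or in the rest of the paper uses $\RepellingFatouCoordinates_1$ off $\BPFPRepellingPetal_1$.
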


\begin{proof}
From section 2.1 and propositions 2.21 and 3.2.3 of \cite{Shishikura2000}, there exists $\DomainOfInverseRepellingFatouMaxRealPart>0$,
 such that we have domains $\BPFPAttractingPetal_1$ and $\BPFPRepellingPetal_1$ and mappings
 $\AttractingFatouCoordinates_1:\BPFPAttractingPetal_1\rightarrow \SetOfComplexNumbers$
 and $\RepellingFatouCoordinates_1:\BPFPRepellingPetal_1\rightarrow\SetOfComplexNumbers$
 satisfying \ref{item:the attracting petal is in the filled jset}, \ref{item:union of petals is neigh of 0},
 \ref{item:qpl univ on petal}, \ref{item:attracting petal invariant}, \ref{item:repelling petal pre-invariant},
 \ref{item:attracting fatou univalent}, \ref{item:repelling fatou univalent}, \ref{item:fatou semi-conjugacy}
 and \ref{item:fatou repelling normalisation}.

Moreover we may suppose that 
\begin{align} 
 \BPFPAttractingPetal_1 & = \AttractingFatouCoordinates_1\left(\left\{
   w:|\arg (w-\DomainOfInverseRepellingFatouMaxRealPart)|<\frac{2\pi}{3}
 \right\}\right), \label{eq:definition of attracting petal}\\
 \BPFPRepellingPetal_1 & = \RepellingFatouCoordinates_1\left(\left\{
   w:|\arg(w+\DomainOfInverseRepellingFatouMaxRealPart)|>\frac{\pi}{3}
 \right\}\right),
\end{align}
where $\arg$ denotes the argument in $\OpenClosedInterval{-\pi,\pi}$.

Since vertical lines in $\left(\AttractingFatouCoordinates_1\right)^{-1}(\BPFPAttractingPetal_1)$
 are perpendicular to orbits, and since the basin of attraction of the parabolic fixed point $1$
 consists of points converging to $0$ from the direction $-1$ (compare \cite{B-Milnor2006}),
 any orbit converging non trivially to $0$ must intersect $\BPFPAttractingPetal_1$.
This shows \ref{item:all orbits pass through attracting petal}.

We can use a similar procedure as in section 4.2.3 of \cite{Shishikura2000} to extend the domain of the mappings
 $\AttractingFatouCoordinates_1:\BPFPAttractingPetal_1\rightarrow \SetOfComplexNumbers$
 and $\RepellingFatouCoordinates_1:\BPFPRepellingPetal_1\rightarrow\SetOfComplexNumbers$,
 respectively on $\Interior{\FilledJuliaSet_1}$ and $\BPFPDomainOfExtendedRepellingFatouCoordinates_1$.
These extended mappings still satisfy \ref{item:fatou semi-conjugacy}.

Let $w\in\SetOfComplexNumbers$. From equation \eqref{eq:definition of attracting petal},
 it follows that there exists an $n\geq 0$ such that $w+n\in\AttractingFatouCoordinates_1\left(\BPFPAttractingPetal_1\right)$.
 Let $z'\in\BPFPAttractingPetal_1$ such that $\AttractingFatouCoordinates_1(z')=w+n$.
Pick  $z\in\SetOfComplexNumbers$ such that $\qpl_1^{\Iterated{n}}(z)=z'$. Then, $z\in\FilledJuliaSet_1$ and
 $w=\AttractingFatouCoordinates_1(z')-n=\AttractingFatouCoordinates_1(z)$. Thus \ref{item:attracting fatou onto}.

Let's define $\InverseRepellingFatouCoordinates_1=\left(\RepellingFatouCoordinates_1\right)_{|\BPFPAttractingPetal_1}^{-1}$.
 The domain of the mapping $\InverseRepellingFatouCoordinates_1$ can be extended to $\SetOfComplexNumbers$ 
 (compare section 4.2.2 of Shishikura's article \cite{Shishikura2000}).
 The mapping $\InverseRepellingFatouCoordinates_1:\SetOfComplexNumbers\rightarrow\SetOfComplexNumbers$
 is onto since the mapping $\qpl_1$ is well defined from $\SetOfComplexNumbers$ to $\SetOfComplexNumbers$.

We can fix $\BPFPAttractingPetalReferencePoint\in\BPFPAttractingPetal_1$ and $\BPFPRepellingPetalReferencePoint\in\BPFPRepellingPetal_1$
 so that points \ref{item:attracting petal ref point} and \ref{item:repelling petal ref point}  are verified.
 Let's define $\BPFPAttractingCrescent_1$ and $\BPFPRepellingCrescent_1$ as in \ref{item:attracting crescent def} and \ref{item:repelling crescent def}.
 As in \cite{Shishikura2000}, it is possible to choose $\BPFPAttractingPetalReferencePoint$ and $\BPFPRepellingPetalReferencePoint$
 so that all the properties of item \ref{item:crescents definition} are satisfied.
\end{proof}

The following can be found in
 \cite{Shishikura1998}
 and has its origin in works of \'Ecalle, Lavaurs, Sentenac or Douady.
The modern version of this is at the basis of Inou-Shishikura near parabolic renormalisation theory
\cite{InouShishikura2008}.

\begin{thm}[Perturbed Fatou coordinates]
 \label{prop:perturbed fatou coordinates}

Let
$\qpl_{\lambda}(z)=\qplf{\lambda}{z}$
and,
 for $\lambda\neq 1$, denote by
$\{0,\TheOtherFixedPoint\}$
 the set of fixed points of $\qpl_{\lambda}$.
We will only consider $\lambda$ such that $\lambda\neq 1$
 and such that there is $\alpha\in \SetOfComplexNumbers$
 satisfying $|\arg \alpha|\leq \pi/4$ and $\lambda=e^{2\SquareRootOfMinusOne\pi\alpha}$.

Then, there exist a positive real number $\DomainOfInverseRepellingFatouMaxRealPart$
 and points $\BPFPAttractingPetalReferencePoint$ and $\BPFPRepellingPetalReferencePoint$
 such that the conclusions of the theorem \ref{prop:extended fatou coordinates}
 hold and such that we have the following.

There exists a positive real number $\alpha_0$, such that if $\lambda$ and $\alpha$ are as above with $|\alpha|\leq \alpha_0$,
 there exist
 open sets
 $\BPFPDomainOfExtendedAttractingFatouCoordinates_\lambda$,
 $\BPFPDomainOfExtendedRepellingFatouCoordinates_\lambda$,
 simply connected domains
 $\BPFPUnionOfAllPetals_\lambda$ and
 $\DomainOfInverseRepellingFatou_\lambda$,
 Jordan domains
 $\BPFPAttractingPetal_\lambda$,
 $\BPFPRepellingPetal_\lambda$,
 holomorphic mappings
 $\AttractingFatouCoordinates_\lambda:\BPFPDomainOfExtendedAttractingFatouCoordinates_\lambda\rightarrow\SetOfComplexNumbers$,
 $\RepellingFatouCoordinates_\lambda:\BPFPDomainOfExtendedRepellingFatouCoordinates_\lambda\rightarrow\SetOfComplexNumbers$,
 $\InverseRepellingFatouCoordinates_\lambda:\DomainOfInverseRepellingFatou_\lambda\rightarrow\SetOfComplexNumbers$,
 simple arcs
 $\BPFPAttractingArc_\lambda$,
 $\BPFPRepellingArc_\lambda$
 and closed Jordan domains
 $\BPFPAttractingCrescent_\lambda$ and
 $\BPFPRepellingCrescent_\lambda$
 satisfying the following.

\begin{enumerate}

\item (Petals)\label{item:perturbed petal}
\begin{enumerate}
\item $\BPFPAttractingPetalReferencePoint\in \BPFPAttractingPetal_\lambda$,\label{item:perturbed attracting petal ref point}
\item $\BPFPRepellingPetalReferencePoint\in\BPFPRepellingPetal_\lambda$
  and $\qpl_\lambda(\BPFPRepellingPetalReferencePoint)\in\BPFPRepellingPetal_\lambda$,
\label{item:perturbed repelling petal ref point}
\item The set $\BPFPUnionOfAllPetals_\lambda= \BPFPAttractingPetal_\lambda\cup\BPFPRepellingPetal_\lambda\cup\{0,\TheOtherFixedPoint\}$
 is a simply connected neighbourhood of $\{0,\TheOtherFixedPoint\}$,
\item $\BPFPDomainOfExtendedAttractingFatouCoordinates_\lambda=\ds{\bigcup_{n\geq 0}} \qpl_\lambda^{-n}(\BPFPAttractingPetal_\lambda)$,
\item $\BPFPDomainOfExtendedRepellingFatouCoordinates_\lambda=\ds{\bigcup_{n\geq 0}} \qpl_\lambda^{-n}(\BPFPRepellingPetal_\lambda)$,
\item $\DomainOfInverseRepellingFatou_\lambda= \left\{w:\RealPart w>\DomainOfInverseRepellingFatouMaxRealPart-\RealPart\frac{1}{\alpha} \right\}$.
\end{enumerate}

\item \label{item:perturbed qpl on petals}
\begin{enumerate}
\item The mapping $\qpl_\lambda$ is univalent on $\BPFPAttractingPetal_\lambda$ and on $\BPFPRepellingPetal_\lambda$.
\end{enumerate}

\item (Perturbed Fatou coordinates) \label{item:perturbed fatou coords}
\begin{enumerate}
\item The mapping $\AttractingFatouCoordinates_\lambda$ is univalent on $\BPFPAttractingPetal_\lambda$,
\item The mapping $\RepellingFatouCoordinates_\lambda$ is univalent on $\BPFPRepellingPetal_\lambda$,
\item The restriction of the mapping
 $\InverseRepellingFatouCoordinates_\lambda$
  on 
 $\left\{w:\RealPart w\in
   \OpenInterval{-\frac{1}{2\alpha}-\DomainOfInverseRepellingFatouMaxRealPart,
                 -\DomainOfInverseRepellingFatouMaxRealPart}
  \right\}$
 coincides with the inverse of $\RepellingFatouCoordinates_\lambda$ on its image.
\end{enumerate}

\item (Semi-conjugacy)\label{item:perturbed semi conj}
\begin{enumerate}
\item If $z\in\BPFPAttractingPetal_\lambda$ is such that $\qpl_\lambda(z)\in\BPFPAttractingPetal_\lambda$, then
 $\AttractingFatouCoordinates_\lambda(\qpl_\lambda(z))=\AttractingFatouCoordinates_\lambda(z)+1$,
\item If $z\in\BPFPRepellingPetal_\lambda$ is such that $\qpl_\lambda(z)\in\BPFPRepellingPetal_\lambda$, then
 $\RepellingFatouCoordinates_\lambda(\qpl_\lambda(z))=\RepellingFatouCoordinates_\lambda(z)+1$.
\end{enumerate}

\item (Normalisation)\label{item:perturbed normalisation}
\begin{enumerate}
\item There exists lifts $\widetilde{\AttractingFatouCoordinates_\lambda}$ and $\widetilde{\RepellingFatouCoordinates_\lambda}$
 of $\AttractingFatouCoordinates_\lambda$ and $\RepellingFatouCoordinates_\lambda$ by
 $w\mapsto \frac{\TheOtherFixedPoint}{1-e^{-2\SquareRootOfMinusOne\pi\alpha w}}$
 such that,
\begin{equation*}
 \widetilde{\RepellingFatouCoordinates_\lambda}\ComposedWith T_\alpha = T_\alpha\ComposedWith\widetilde{\AttractingFatouCoordinates_\lambda},
\end{equation*}
 where $T_\alpha$ denotes the translation by $-1/\alpha$.
\end{enumerate}

\item (Crescents)\label{item:perturbed crescents definition}
\begin{enumerate}
\item $\BPFPAttractingCrescent_\lambda=\left(\AttractingFatouCoordinates_\lambda\right)^{-1}
        \left(\{w:\RealPart{w}\in
              \ClosedInterval{\RealPart\AttractingFatouCoordinates_\lambda(\BPFPAttractingPetalReferencePoint),
                              \RealPart\AttractingFatouCoordinates_\lambda(\BPFPAttractingPetalReferencePoint)+1}
        \}\right)$,
\item $\BPFPRepellingCrescent_\lambda=\InverseRepellingFatouCoordinates_\lambda
        \left(\{w:\RealPart{w}\in
              \ClosedInterval{\RealPart\RepellingFatouCoordinates_\lambda(\BPFPRepellingPetalReferencePoint),
                              \RealPart\RepellingFatouCoordinates_\lambda(\BPFPRepellingPetalReferencePoint)+1}
        \}\right)$,
\item $\BPFPAttractingCrescent_\lambda\subset\BPFPAttractingPetal_\lambda\cup\{0,\TheOtherFixedPoint\}$,
\item $\BPFPRepellingCrescent_\lambda\subset\BPFPRepellingPetal_\lambda\cup\{0,\TheOtherFixedPoint\}$,
\item Both arcs $\BPFPAttractingArc_\lambda$ and $\BPFPRepellingArc_\lambda$ join $0$ to $\TheOtherFixedPoint$,
\item The closed domain $\BPFPAttractingCrescent_\lambda$ is bounded by $\BPFPAttractingArc_\lambda$
 and its image $\qpl_\lambda\left(\BPFPAttractingArc_\lambda\right)$,
\item The closed domain $\BPFPRepellingCrescent_\lambda$ is bounded by $\BPFPRepellingArc_\lambda$
 and its image $\qpl_\lambda\left(\BPFPRepellingArc_\lambda\right)$,
\item $\BPFPAttractingCrescent_\lambda\cap\BPFPRepellingCrescent_\lambda=\{0,\TheOtherFixedPoint\}$.
\end{enumerate}

\item \label{item:iterations through gate}
For all $z\in\BPFPAttractingCrescent_\lambda$, there is a $\lit\geq 1$ such that $\qpl_\lambda^{\Iterated{\lit}}(z)\in\BPFPRepellingCrescent_\lambda$,
 and for the smallest such $\lit$ we have:
\begin{equation}\label{eq:gate crossing}
 \RepellingFatouCoordinates_\lambda\left(\qpl_\lambda^{\Iterated{\lit}}(\zeta)\right) = 
 \AttractingFatouCoordinates_\lambda(\zeta)-\frac{1}{\alpha}+\lit,
\end{equation}
for all $\zeta$ in
 $\left(\AttractingFatouCoordinates_\lambda\right)^{-1}\left(\left\{
  w:\RealPart w\in\OpenInterval{\DomainOfInverseRepellingFatouMaxRealPart-\lit,\xi+\RealPart\frac{1}{2\alpha}}
 \right\}\right)\supset \BPFPAttractingPetal_\lambda$.

\item When $\lambda\TendsTo 1$ with $\lambda$ and $\alpha$ satisfying the above hypothesis, then we have the following convergences.
\begin{enumerate}
\item With respect to Hausdorff metric:\label{item:cv of closed domains}
\begin{enumerate}
\item $\BPFPAttractingArc_\lambda\TendsTo\BPFPAttractingArc_1$,
\item $\BPFPRepellingArc_\lambda\TendsTo\BPFPRepellingArc_1$,
\item $\BPFPAttractingCrescent_\lambda\TendsTo\BPFPAttractingCrescent_1$,
\item $\BPFPRepellingCrescent_\lambda\TendsTo\BPFPRepellingCrescent_1$.
\end{enumerate}

\item With respect to Hausdorff pseudometric:
\begin{enumerate}
\item $\BPFPAttractingPetal_\lambda\TendsTo\BPFPAttractingPetal_1$,
\item $\BPFPRepellingPetal_\lambda\TendsTo\BPFPRepellingPetal_1$,
\item $\BPFPUnionOfAllPetals_\lambda\TendsTo\BPFPUnionOfAllPetals_1$.
\end{enumerate}

\item As mappings with domains (compare section \ref{sec:cv with domain}):
\begin{enumerate}\label{item:cv of perturbed fatou coordinates}
\item $\AttractingFatouCoordinates_\lambda\TendsTo\AttractingFatouCoordinates_1$ on $\BPFPAttractingPetal_1$,
\item $\RepellingFatouCoordinates_\lambda\TendsTo\RepellingFatouCoordinates_1$ on $\BPFPRepellingPetal_1$,
\item $\InverseRepellingFatouCoordinates_\lambda\TendsTo\InverseRepellingFatouCoordinates_1$ on $\SetOfComplexNumbers$.
\end{enumerate}

\end{enumerate}

\end{enumerate}

\end{thm}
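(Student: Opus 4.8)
The statement is essentially a dictionary entry, repackaging the Douady--Hubbard--Lavaurs--Shishikura theory of parabolic implosion for the family $\qpl_\lambda(z)=\qplf{\lambda}{z}$ in the precise form used in Section \ref{sec:convergence properties of the approximation of the area}. The plan is to import from \cite{Shishikura1998} (see also \cite{Shishikura2000}, \cite{InouShishikura2008}, \cite{BuffCheritat2012}) the analytically substantial part --- the existence and normalisation of the perturbed Fatou coordinates $\AttractingFatouCoordinates_\lambda,\RepellingFatouCoordinates_\lambda$, and above all their convergence ``as mappings with domains'' to the parabolic Fatou coordinates of $\qpl_1$ (item \ref{item:cv of perturbed fatou coordinates}) --- and then to deduce the rest (the extended coordinates on $\BPFPDomainOfExtendedAttractingFatouCoordinates_\lambda,\BPFPDomainOfExtendedRepellingFatouCoordinates_\lambda,\DomainOfInverseRepellingFatou_\lambda$; the gate-crossing identity \eqref{eq:gate crossing}; and the Hausdorff convergences of item \ref{item:cv of closed domains}) by routine work on top of that core, entirely parallel to the treatment of $\qpl_1$ in the proof of Theorem \ref{prop:extended fatou coordinates}.

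\textbf{Import and extensions.} First fix $\DomainOfInverseRepellingFatouMaxRealPart>0$ and the reference points $\BPFPAttractingPetalReferencePoint,\BPFPRepellingPetalReferencePoint$ as produced by Shishikura's construction; because that construction is uniform in $\alpha$ and degenerates, as $\alpha\TendsTo 0$, to the parabolic one, these data can be taken so that the conclusion of Theorem \ref{prop:extended fatou coordinates} also holds for $\qpl_1$ with them. For $|\arg\alpha|\le\pi/4$ and $|\alpha|$ below a suitable $\alpha_0$, Shishikura's theorem then yields petals $\BPFPAttractingPetal_\lambda,\BPFPRepellingPetal_\lambda$ of the form \eqref{eq:definition of attracting petal} in the perturbed coordinates, univalent $\AttractingFatouCoordinates_\lambda$ on $\BPFPAttractingPetal_\lambda$ and $\RepellingFatouCoordinates_\lambda$ on $\BPFPRepellingPetal_\lambda$ with the semi-conjugacies of item \ref{item:perturbed semi conj}, the normalisation of item \ref{item:perturbed normalisation}, and item \ref{item:cv of perturbed fatou coordinates}; item \ref{item:perturbed petal} and item \ref{item:perturbed qpl on petals} are part of the same package. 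Now extend $\AttractingFatouCoordinates_\lambda$ to $\BPFPDomainOfExtendedAttractingFatouCoordinates_\lambda=\bigcup_{n\ge0}\qpl_\lambda^{-n}(\BPFPAttractingPetal_\lambda)$ and $\RepellingFatouCoordinates_\lambda$ to $\BPFPDomainOfExtendedRepellingFatouCoordinates_\lambda=\bigcup_{n\ge0}\qpl_\lambda^{-n}(\BPFPRepellingPetal_\lambda)$ through the functional equations, exactly as in the proof of Theorem \ref{prop:extended fatou coordinates}, keeping item \ref{item:perturbed semi conj}; take $\InverseRepellingFatouCoordinates_\lambda=(\RepellingFatouCoordinates_\lambda)^{-1}$ on a left half-plane and push it forward by $\qpl_\lambda\ComposedWith\InverseRepellingFatouCoordinates_\lambda=\InverseRepellingFatouCoordinates_\lambda(\cdot+1)$ onto $\DomainOfInverseRepellingFatou_\lambda=\{w:\RealPart w>\DomainOfInverseRepellingFatouMaxRealPart-\RealPart\frac{1}{\alpha}\}$ (the maximal domain, by Shishikura's analysis; as $\alpha\TendsTo 0$ it exhausts $\SetOfComplexNumbers$, consistently with $\InverseRepellingFatouCoordinates_1:\SetOfComplexNumbers\TendsTo\SetOfComplexNumbers$), and $\InverseRepellingFatouCoordinates_\lambda\TendsTo\InverseRepellingFatouCoordinates_1$ on $\SetOfComplexNumbers$ follows from $\RepellingFatouCoordinates_\lambda\TendsTo\RepellingFatouCoordinates_1$ together with $\qpl_\lambda\TendsTo\qpl_1$ and an induction on the number of iterates of $\qpl_\lambda$ needed to reach a given compact set (Lemma \ref{lem:cv lemma} is convenient to keep those compacta inside $\DomainOfInverseRepellingFatou_\lambda$ for $\lambda$ close to $1$). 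This gives item \ref{item:perturbed fatou coords} and the third of the convergences in item \ref{item:cv of perturbed fatou coordinates}.

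\textbf{Gate crossing.} Recall that $w\mapsto\frac{\TheOtherFixedPoint}{1-e^{-2\SquareRootOfMinusOne\pi\alpha w}}$ is the linear model for $\qpl_\lambda$ near the pair $\{0,\TheOtherFixedPoint\}$, and that $T_\alpha$, translation by $-1/\alpha$, is one of its deck transformations since $e^{-2\SquareRootOfMinusOne\pi\alpha(1/\alpha)}=1$; hence the normalisation of item \ref{item:perturbed normalisation} descends to $\RepellingFatouCoordinates_\lambda=\AttractingFatouCoordinates_\lambda-\frac{1}{\alpha}$ on the ``far'' overlap of the two petals. Now let $z\in\BPFPAttractingCrescent_\lambda\subset\BPFPAttractingPetal_\lambda\subset\BPFPDomainOfExtendedAttractingFatouCoordinates_\lambda$; by item \ref{item:perturbed semi conj}, $\RealPart\AttractingFatouCoordinates_\lambda(\qpl_\lambda^{\Iterated{j}}(z))=\RealPart\AttractingFatouCoordinates_\lambda(z)+j$ increases without bound, so the orbit reaches that overlap region, and the descended relation places $\qpl_\lambda^{\Iterated{\lit}}(z)$ in $\BPFPRepellingCrescent_\lambda$ precisely when $\RealPart\AttractingFatouCoordinates_\lambda$ has grown by $\RealPart\frac{1}{\alpha}+O(1)$ --- so the least such $\lit$ is $\RealPart\frac{1}{\alpha}+O(1)\ge 1$. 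For that $\lit$, using that $\RepellingFatouCoordinates_\lambda$ increments by $1$ along the last steps of the orbit, \eqref{eq:gate crossing} follows; it then extends from $\zeta=z$ to every $\zeta$ in the stated strip by analytic continuation, since there $\AttractingFatouCoordinates_\lambda$ is defined via $\BPFPDomainOfExtendedAttractingFatouCoordinates_\lambda$, $\RepellingFatouCoordinates_\lambda\ComposedWith\qpl_\lambda^{\Iterated{\lit}}$ via $\BPFPDomainOfExtendedRepellingFatouCoordinates_\lambda$, and $\RepellingFatouCoordinates_\lambda=\AttractingFatouCoordinates_\lambda-\frac{1}{\alpha}+\lit$ already holds on a non-empty open set; the containment of $\BPFPAttractingPetal_\lambda$ in that strip is immediate from \eqref{eq:definition of attracting petal}. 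The crescents and arcs (item \ref{item:perturbed crescents definition}) are defined by the same formulas as in item \ref{item:crescents definition} of Theorem \ref{prop:extended fatou coordinates}, using the univalent branches of the Fatou coordinates, and enjoy the same properties by the same arguments.

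\textbf{Convergences and the main obstacle.} Granting item \ref{item:cv of perturbed fatou coordinates}, the convergences of item \ref{item:cv of closed domains} are soft: from $\AttractingFatouCoordinates_\lambda\TendsTo\AttractingFatouCoordinates_1$ locally uniformly on $\BPFPAttractingPetal_1$ and univalence there one gets $(\AttractingFatouCoordinates_\lambda)^{-1}\TendsTo(\AttractingFatouCoordinates_1)^{-1}$ locally uniformly on compact subsets of $\AttractingFatouCoordinates_1(\BPFPAttractingPetal_1)$ (Hurwitz), and with $\RealPart\AttractingFatouCoordinates_\lambda(\BPFPAttractingPetalReferencePoint)\TendsTo\RealPart\AttractingFatouCoordinates_1(\BPFPAttractingPetalReferencePoint)$ and item \ref{item:attracting crescent def} this yields $\BPFPAttractingCrescent_\lambda\TendsTo\BPFPAttractingCrescent_1$ and $\BPFPAttractingArc_\lambda\TendsTo\BPFPAttractingArc_1$ for the Hausdorff metric --- the only subtlety is that these closed sets must stay inside a fixed compact set, which follows from Lemma \ref{lem:cv lemma} applied to $(\AttractingFatouCoordinates_1)^{-1}$ on the closed width-$1$ strip, using that all the crescents share the cusp at $0$ (and $\TheOtherFixedPoint\TendsTo 0$) so none of their mass escapes. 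The repelling crescent and arc are identical with $\InverseRepellingFatouCoordinates_\lambda\TendsTo\InverseRepellingFatouCoordinates_1$ on $\SetOfComplexNumbers$ and item \ref{item:repelling crescent def}, and the open petals and $\BPFPUnionOfAllPetals_\lambda$ converge in the Hausdorff pseudometric from the same inverse-mapping convergence applied to the sectors of \eqref{eq:definition of attracting petal} together with $\TheOtherFixedPoint\TendsTo 0$. The main obstacle is thus concentrated in the imported item \ref{item:cv of perturbed fatou coordinates}: this is the substance of parabolic implosion and is genuinely hard, but it is exactly what is proved in \cite{Shishikura1998}. The only new care needed beyond the reference is that $\DomainOfInverseRepellingFatouMaxRealPart$ and the reference points can be fixed once for both $\qpl_1$ and all small $\alpha$, and that the derived crescents converge with enough uniformity near the shared cusp at $0$ for item \ref{item:cv of closed domains} to be legitimate --- both supplied by Shishikura's uniform estimates near the bifurcating fixed point.
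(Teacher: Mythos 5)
Your proposal follows essentially the same strategy as the paper's proof: both treat the analytically substantial parts (existence, univalence, normalisation, and convergence of the perturbed Fatou coordinates) as imports from Shishikura's theory, build the petals from the covering map $w\mapsto\TheOtherFixedPoint/(1-e^{-2\SquareRootOfMinusOne\pi\alpha w})$, and then derive the remaining structure (extended domains, crescents, gate crossing, Hausdorff convergences) by bookkeeping parallel to the proof of Theorem \ref{prop:extended fatou coordinates}. Your write-up is in fact more explicit than the paper's own, which is content with citing \cite{Shishikura2000} (proposition 3.2.2, sections 3.4.1, 3.4.3, 4.2.3) and does not spell out item \ref{item:iterations through gate} or the convergence statements at all, whereas you usefully record how the gate-crossing identity \eqref{eq:gate crossing} descends from the normalisation and how item \ref{item:cv of closed domains} follows from item \ref{item:cv of perturbed fatou coordinates} via Hurwitz and Lemma \ref{lem:cv lemma}.
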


The perturbed Fatou coordinates is also called Fatou-Douady coordinates.

\begin{proof}
Let $\DomainOfInverseRepellingFatouMaxRealPart$ be as in theorem \ref{prop:extended fatou coordinates}.
One may increase the value of $\DomainOfInverseRepellingFatouMaxRealPart$ if necessary without changing the final conclusions.
Let $\PreFatouCoverMap_\lambda(w)=\frac{\TheOtherFixedPoint}{1-e^{-2\SquareRootOfMinusOne\pi\alpha w}}$
 and let $\PreFatouCoverMap_\lambda^{\AttractingTag}$ and $\PreFatouCoverMap_\lambda^{\RepellingTag}$
 be the restrictions of the mapping $\PreFatouCoverMap_\lambda$ on the respective domains \linebreak
 $\left\{w:\RealPart{w}\in\OpenInterval{-\RealPart\frac{1}{2\alpha}+\DomainOfInverseRepellingFatouMaxRealPart,
                                        \RealPart\frac{1}{2\alpha}+\DomainOfInverseRepellingFatouMaxRealPart}\right\}$
 and
 $\left\{w:\RealPart{w}\in\OpenInterval{-\RealPart\frac{1}{2\alpha}-\DomainOfInverseRepellingFatouMaxRealPart,
                                        \RealPart\frac{1}{2\alpha}-\DomainOfInverseRepellingFatouMaxRealPart}\right\}$.
The mappings $\PreFatouCoverMap_\lambda^{\AttractingTag}$ and $\PreFatouCoverMap_\lambda^{\RepellingTag}$
 are analytic diffeomorphisms onto their respective images.

Let $\BPFPAttractingPetal_\lambda$ be the image of
 $\left\{w:\RealPart (w-\DomainOfInverseRepellingFatouMaxRealPart)>-|\ImaginaryPart w|\right\}$
 by $\PreFatouCoverMap_\lambda^{\AttractingTag}$
 and $\BPFPRepellingPetal_\lambda$ the image of 
 $\left\{w:\RealPart (w+\DomainOfInverseRepellingFatouMaxRealPart)<|\ImaginaryPart w|\right\}$
 by $\PreFatouCoverMap_\lambda^{\RepellingTag}$.

Then, as in \cite{Shishikura2000} (see proposition 3.2.2 and sections 3.4.1, 3.4.3 and 4.2.3),
 the perturbed Fatou coordinates $\AttractingFatouCoordinates_\lambda$ and $\RepellingFatouCoordinates_\lambda$
 are defined on $\BPFPAttractingPetal_\lambda$ and $\BPFPRepellingPetal_\lambda$ respectively.
One can also define $\InverseRepellingFatouCoordinates_\lambda$ the same way as before.

One can easily check that the domain of these mappings can be extended so that items
 \ref{item:perturbed petal}, \ref{item:perturbed qpl on petals}, \ref{item:perturbed fatou coords} and \ref{item:perturbed semi conj}
 are true.

The item \ref{item:perturbed normalisation} follows from 3.4.3 of \cite{Shishikura2000}.

And the construction of crescents satisfying item \ref{item:perturbed crescents definition}
 is similar to the proof of theorem \ref{prop:extended fatou coordinates}.
\end{proof}

\begin{rem}\label{rem:attracting petal in disk6}
 One can choose $\DomainOfInverseRepellingFatouMaxRealPart>0$ so that $\BPFPAttractingPetal_1\subset\Disk_6$
 and $\BPFPAttractingPetal_\lambda\subset\Disk_6$
 for all $\lambda$ close enough to $1$.
\end{rem}

\subsubsection{Lavaurs maps}

The following result comes mainly from the work of Lavaurs \cite{Lavaurs1989}
 and can be found in Douady's paper \cite{Douady1994}.

Due to our purpose, the statement is only given with $\alpha$ real.

\begin{lem}[\cite{Douady1994}, section 1.8]\label{lem:cv to lavaurs map}
Let $\alpha_n\tendsto 0$ be a sequence of positive real numbers.
Suppose that the fractional part $\FractionalPart{\frac{1}{\alpha_n}}$ converges to 
 $\ebphase$.
Let
 $\lambda_n=e^{2\ii\pi\alpha_n}$ and
 $\lit_n=\frac{1}{\alpha_n}-\FractionalPart{\frac{1}{\alpha_n}}$.

Then $\qpl_{\lambda_n}^{\iter{\lit_n}}$ converges on compact subsets of $\Interior{\FilledJuliaSet_1}$ to the mapping
\begin{equation*}
 \LavaursMap_{\ebphase} = \InverseRepellingFatouCoordinates_{1}
\left(\AttractingFatouCoordinates_1-\ebphase
\right):\Interior{\FilledJuliaSet_1}\rightarrow \cplxp.
\end{equation*}

Moreover,
 $\LavaursMap_\EggBeaterPhase$ commutes with $\qpl_1$ and
 $\LavaursMap_\EggBeaterPhase(\BPFPAttractingPetal_1)=\SetOfComplexNumbers$.

\end{lem}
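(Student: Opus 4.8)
The plan is to prove the convergence via the perturbed Fatou coordinates of Theorem~\ref{prop:perturbed fatou coordinates}, by factoring an orbit of length $\lit_n$ under $\qpl_{\lambda_n}$ through the perturbed parabolic gate. Note first that since $\alpha_n>0$ we have $\lit_n=\IntegerPart{1/\alpha_n}\in\SetOfNaturalNumbers$ and $\lit_n-\frac1{\alpha_n}=-\FractionalPart{\frac1{\alpha_n}}\TendsTo-\ebphase$; this is the shift that will survive in the limit. I would then fix a compact set $K\subset\Interior{\fillj_1}$. By item~\ref{item:all orbits pass through attracting petal} of Theorem~\ref{prop:extended fatou coordinates} every point of $\Interior{\fillj_1}$ has a forward $\qpl_1$-iterate in the open set $\BPFPAttractingPetal_1$; continuity of the iterates together with compactness of $K$ yields a single $\IterationsToThePetal\in\SetOfNaturalNumbers$ with $\qpl_1^{\Iterated\IterationsToThePetal}(K)$ a compact subset of $\BPFPAttractingPetal_1$. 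As $\qpl_{\lambda_n}\TendsTo\qpl_1$ uniformly on compacts, $\qpl_{\lambda_n}^{\Iterated\IterationsToThePetal}\TendsTo\qpl_1^{\Iterated\IterationsToThePetal}$ uniformly on $K$, so for $n$ large $\qpl_{\lambda_n}^{\Iterated\IterationsToThePetal}(z)$ lies near $\qpl_1^{\Iterated\IterationsToThePetal}(z)$; using the petal and coordinate convergences of Theorem~\ref{prop:perturbed fatou coordinates} (items~\ref{item:cv of closed domains} and~\ref{item:cv of perturbed fatou coordinates}) and the semi-conjugacy of $\AttractingFatouCoordinates$, one gets that $z\in\BPFPDomainOfExtendedAttractingFatouCoordinates_{\lambda_n}$ and $\AttractingFatouCoordinates_{\lambda_n}(z)\TendsTo\AttractingFatouCoordinates_1(z)$ uniformly for $z\in K$, with in particular $\RealPart\AttractingFatouCoordinates_{\lambda_n}(z)$ bounded.

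The core step is to cross the gate. Because $\alpha_n\TendsTo0$, the window $\OpenInterval{\DomainOfInverseRepellingFatouMaxRealPart-\ell,\DomainOfInverseRepellingFatouMaxRealPart+\RealPart\frac1{2\alpha_n}}$ occurring in item~\ref{item:iterations through gate} eventually contains $\RealPart\AttractingFatouCoordinates_{\lambda_n}(\qpl_{\lambda_n}^{\Iterated\IterationsToThePetal}(z))$ (the transit time $\ell$ is comparable to $1/\alpha_n\to\infty$, and $\RealPart\frac1{2\alpha_n}\to+\infty$), so with $\lit_n'$ the smallest positive integer such that $\qpl_{\lambda_n}^{\Iterated{\lit_n'}}(\qpl_{\lambda_n}^{\Iterated\IterationsToThePetal}(z))\in\BPFPRepellingCrescent_{\lambda_n}$, equation~\eqref{eq:gate crossing} gives $\RepellingFatouCoordinates_{\lambda_n}\!\left(\qpl_{\lambda_n}^{\Iterated{\IterationsToThePetal+\lit_n'}}(z)\right)=\AttractingFatouCoordinates_{\lambda_n}(\qpl_{\lambda_n}^{\Iterated\IterationsToThePetal}(z))-\frac1{\alpha_n}+\lit_n'$. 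Since the landing point is in the repelling crescent, $\RealPart\RepellingFatouCoordinates_{\lambda_n}$ of it is bounded (it converges to the value at $\lambda=1$ by item~\ref{item:cv of perturbed fatou coordinates}), so the identity forces $\lit_n'=\frac1{\alpha_n}+\BigO1$; hence $\lit_n-\IterationsToThePetal-\lit_n'$ is a bounded integer. Choosing the reference point $\BPFPRepellingPetalReferencePoint$ far enough into $\BPFPRepellingPetal_1$ that the repelling crescent sits well to the left of $\{\RealPart w=-\DomainOfInverseRepellingFatouMaxRealPart\}$, the coordinate values stay in the strip of item~\ref{item:perturbed fatou coords} on which $\InverseRepellingFatouCoordinates_{\lambda_n}$ inverts $\RepellingFatouCoordinates_{\lambda_n}$ (this strip exhausts every bounded set as $\alpha_n\to0$), and absorbing the bounded discrepancy by further iterations inside $\BPFPRepellingPetal_{\lambda_n}$, where $\RepellingFatouCoordinates_{\lambda_n}$ advances by $1$ at each step (item~\ref{item:perturbed semi conj}) and $\InverseRepellingFatouCoordinates_{\lambda_n}$ intertwines that shift with $\qpl_{\lambda_n}$, one arrives at $\qpl_{\lambda_n}^{\Iterated{\lit_n}}(z)=\InverseRepellingFatouCoordinates_{\lambda_n}\!\left(\AttractingFatouCoordinates_{\lambda_n}(z)-\frac1{\alpha_n}+\lit_n\right)$ for all $n$ large and all $z\in K$. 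This is exactly Lavaurs' computation that $\IntegerPart{1/\alpha_n}$ iterates of $\qpl_{\lambda_n}$ realise, in perturbed Fatou coordinates, the translation by $\lit_n-\frac1{\alpha_n}$; the delicate point — and what I expect to be the main obstacle — is this bookkeeping of the transit time against $\lit_n$ and the clean absorption of the finitely-many-step error (cf. \cite{Douady1994}, section~1.8).

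Passing to the limit is then routine: $\AttractingFatouCoordinates_{\lambda_n}(z)\TendsTo\AttractingFatouCoordinates_1(z)$ and $\lit_n-\frac1{\alpha_n}\TendsTo-\ebphase$, so the argument of $\InverseRepellingFatouCoordinates_{\lambda_n}$ above converges to $\AttractingFatouCoordinates_1(z)-\ebphase$, uniformly for $z\in K$; and since $\InverseRepellingFatouCoordinates_{\lambda_n}\TendsTo\InverseRepellingFatouCoordinates_1$ on $\SetOfComplexNumbers$ as mappings with domains (item~\ref{item:cv of perturbed fatou coordinates}), i.e. locally uniformly, it follows that $\qpl_{\lambda_n}^{\Iterated{\lit_n}}(z)\TendsTo\InverseRepellingFatouCoordinates_1\!\left(\AttractingFatouCoordinates_1(z)-\ebphase\right)=\LavaursMap_\ebphase(z)$ uniformly on $K$, which is the asserted convergence on compact subsets of $\Interior{\fillj_1}$.

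Finally, for the two extra properties. That $\LavaursMap_\ebphase$ commutes with $\qpl_1$ is immediate from $\AttractingFatouCoordinates_1\ComposedWith\qpl_1=\AttractingFatouCoordinates_1+1$ and $\qpl_1\ComposedWith\InverseRepellingFatouCoordinates_1=\InverseRepellingFatouCoordinates_1\ComposedWith(\,\cdot+1)$ (the latter follows from $\RepellingFatouCoordinates_1\ComposedWith\qpl_1=\RepellingFatouCoordinates_1+1$ and the fact that $\InverseRepellingFatouCoordinates_1$ extends a local inverse of $\RepellingFatouCoordinates_1$, by the identity principle): then $\LavaursMap_\ebphase\ComposedWith\qpl_1=\InverseRepellingFatouCoordinates_1(\AttractingFatouCoordinates_1+1-\ebphase)=\qpl_1\ComposedWith\LavaursMap_\ebphase$. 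For $\LavaursMap_\ebphase(\BPFPAttractingPetal_1)=\SetOfComplexNumbers$: from $\AttractingFatouCoordinates_1(\Interior{\fillj_1})=\SetOfComplexNumbers$ and $\InverseRepellingFatouCoordinates_1(\SetOfComplexNumbers)=\SetOfComplexNumbers$ one has $\LavaursMap_\ebphase(\Interior{\fillj_1})=\SetOfComplexNumbers$; since $\Interior{\fillj_1}=\bigcup_{n\ge0}\qpl_1^{-n}(\BPFPAttractingPetal_1)$ (item~\ref{item:all orbits pass through attracting petal}, together with $\BPFPAttractingPetal_1\subset\Interior{\fillj_1}$ and backward invariance of $\Interior{\fillj_1}$) and $\LavaursMap_\ebphase$ commutes with $\qpl_1$, the open set $A:=\LavaursMap_\ebphase(\BPFPAttractingPetal_1)$ satisfies $\qpl_1(A)\subset A$ (item~\ref{item:attracting petal invariant}) and $\bigcup_{n\ge0}\qpl_1^{-n}(A)=\SetOfComplexNumbers$; an open $\qpl_1$-forward-invariant set whose backward iterates cover $\SetOfComplexNumbers$ must be all of $\SetOfComplexNumbers$, since otherwise its complement $E$ would be a nonempty closed set with $\qpl_1^{-1}(E)\subset E$ meeting no full forward orbit — forcing $E\cap\julia_1=\EmptySet$ (nested closed subsets of the compact totally invariant $\julia_1$ with empty intersection) and then, after splitting $E$ along the components of $\fatou_1$ and using density of backward orbits of non-exceptional points, $E=\EmptySet$. (Alternatively this is in \cite{Douady1994}.)
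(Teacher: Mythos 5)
Your proposal is essentially the paper's proof: you factor a $\lit_n$-step orbit through the perturbed parabolic gate via Theorem~\ref{prop:perturbed fatou coordinates}~(\ref{item:iterations through gate}), account for the transit time as $\frac1{\alpha_n}+\BigO1$, arrive at the identity $\qpl_{\lambda_n}^{\Iterated{\lit_n}}(z)=\InverseRepellingFatouCoordinates_{\lambda_n}\bigl(\AttractingFatouCoordinates_{\lambda_n}(z)-\frac1{\alpha_n}+\lit_n\bigr)$ for $n$ large, and pass to the limit using $\lit_n-\frac1{\alpha_n}\TendsTo-\ebphase$ and item~\ref{item:cv of perturbed fatou coordinates}. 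Two points where you differ, one of them to your disadvantage and one to your advantage.

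First, the bookkeeping. The paper writes $\lit_n=\lit_n'+k_n$ and arranges $k_n\geq 0$ by choosing the anchor $z_\lambda\in\BPFPAttractingCrescent_\lambda$ close to $0$ (so the integer transit time $\lit_n'$ satisfies $\lit_n'<\frac1{\alpha_n}$, hence $\lit_n'\leq\IntegerPart{1/\alpha_n}=\lit_n$), then absorbs the nonnegative discrepancy \emph{before} the gate, replacing $z$ by $\qpl_{\lambda_n}^{\Iterated{k_n}}(z)$ and using $\AttractingFatouCoordinates_\lambda\ComposedWith\qpl_\lambda=\AttractingFatouCoordinates_\lambda+1$. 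You absorb it \emph{after} the gate via $\qpl_\lambda\ComposedWith\InverseRepellingFatouCoordinates_\lambda=\InverseRepellingFatouCoordinates_\lambda\ComposedWith(\cdot+1)$; this only literally works if $\lit_n-\IterationsToThePetal-\lit_n'\geq 0$, which you haven't arranged (your bound only gives it is $\BigO1$, with no control on its sign). You should either arrange the anchor as the paper does so the remainder has a definite sign, or note that the gate-crossing identity in fact holds not only for the minimal transit time but for any $p$ in a range that includes $\lit_n$, as long as $\AttractingFatouCoordinates_{\lambda_n}(z)-\frac1{\alpha_n}+p$ stays in $\DomainOfInverseRepellingFatou_{\lambda_n}$. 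As written this step has a small gap.

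Second, the surjectivity $\LavaursMap_\ebphase(\BPFPAttractingPetal_1)=\SetOfComplexNumbers$. The paper dispatches this by invoking onto-ness of $\AttractingFatouCoordinates_1$ and $\InverseRepellingFatouCoordinates_1$, which by itself gives $\LavaursMap_\ebphase(\Interior{\FilledJuliaSet_1})=\SetOfComplexNumbers$ but not directly the stated claim on $\BPFPAttractingPetal_1$, since $\AttractingFatouCoordinates_1(\BPFPAttractingPetal_1)$ is only a sector and not all of $\SetOfComplexNumbers$. Your argument — $A=\LavaursMap_\ebphase(\BPFPAttractingPetal_1)$ is open and nonempty, forward-invariant under $\qpl_1$ by commutation and petal invariance, with $\bigcup_n\qpl_1^{-n}(A)=\SetOfComplexNumbers$ coming from $\Interior{\FilledJuliaSet_1}=\bigcup_n\qpl_1^{-n}(\BPFPAttractingPetal_1)$, and then $A=\SetOfComplexNumbers$ by the blow-up property and density of backward orbits — fills this in cleanly and is a genuine improvement in completeness over the paper's one-line justification.
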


In \cite{Douady1994}, a mapping $\LavaursMap_{\ebphase}$ of the form
 $\LavaursMap_{\ebphase}=\InverseRepellingFatouCoordinates_{1}\left(\AttractingFatouCoordinates_{1}-\ebphase\right)$
 is called a Lavaurs map.

 The above proposition can be proven by using theorem \ref{prop:perturbed fatou coordinates}.

\begin{proof}

Let $\lambda$ be as in theorem \ref{prop:perturbed fatou coordinates}
 and let $z_\lambda\in\BPFPAttractingCrescent_\lambda\SetComplement\{0,\TheOtherFixedPoint\}$.
Then, from item \ref{item:iterations through gate} of that theorem,
 there exists $\lit'=\lit'(\lambda,z_\lambda)\geq 1$ minimal such that 
 $\qpl_\lambda^{\Iterated{\lit'}}(z_\lambda)\in\BPFPRepellingCrescent_\lambda$.
Moreover, for this $\lit'$, the identity
 $\qpl_\lambda^{\Iterated{\lit'}}(z)=\InverseRepellingFatouCoordinates_\lambda\left(
  \AttractingFatouCoordinates_\lambda(z)-\frac{1}{\alpha}+\lit'\right)$
 is true for all $z$ in a domain that converges to $\Interior{\FilledJuliaSet_1}$
 as $\lambda\TendsTo 1$.

For $\lambda$ fixed and $z$ close to $0$, the mapping $\qpl_\lambda^{\Iterated{k}}(z)$
 is close to $z\mapsto \lambda^{k}z$.
 Since the closed sets $\BPFPAttractingCrescent_\lambda$ and $\BPFPRepellingCrescent_\lambda$
 converge respectively to $\BPFPAttractingCrescent_1$ and $\BPFPRepellingCrescent_1$
 as $\lambda\TendsTo 1$, it follows that if $z_\lambda\in \BPFPAttractingCrescent_\lambda$ is close enough to $0$,
 and if $\lit'\geq 1$ is minimal such that $\qpl_\lambda^{\Iterated{\lit'}}(z_\lambda)\in\BPFPRepellingCrescent_\lambda$,
 then, $\lit'<\frac{1}{\alpha}$.
 
Moreover, for this $\lit'(\lambda)$, equation \eqref{eq:gate crossing} is verified.
 Since $\lit'(\lambda)$ does not depend on $z$, it follows from the convergence part of theorem
 \ref{prop:perturbed fatou coordinates}
 that $\frac{1}{\alpha}-\lit'$ is bounded.
 
Now, consider the sequence of $\lambda_n$ and
 suppose that $z_{\lambda_n}$ is chosen so that $\lit'(\lambda_n,z_{\lambda_n})<\frac{1}{\alpha_n}$.
 Then $\lit_n=\lit_n'+k_n$ with $\lit_n'=\lit'(\lambda_n,z_{\lambda_n})$ and $k_n\geq 0$.
Since the sequence $(k_n)_n$ is bounded, let $k_{max}$ be an upper bound for $k_n$.

Let $z\in\Interior{\FilledJuliaSet_1}$. Then, for $n$ large enough,
 $z,\qpl_{\lambda_n}(z),\dots,\qpl_{\lambda_n}^{\Iterated{k_{max}}}(z)$
 all belong to
 $\left(\AttractingFatouCoordinates_{\lambda_n}\right)^{-1}\left(\left\{
  w:\RealPart w\in\OpenInterval{\DomainOfInverseRepellingFatouMaxRealPart-\lit_n',\xi+\frac{1}{2\alpha_n}}
 \right\}\right)$.
Then,
\begin{align*}
 \qpl_{\lambda_n}^{\Iterated{\lit_n}}(z) & = \InverseRepellingFatouCoordinates_{\lambda_n}\left(
        \AttractingFatouCoordinates_{\lambda_n}\left(\qpl_{\lambda_n}^{\Iterated{k_n}}(z)\right)
         -\frac{1}{\alpha_n}+\lit_n'
 \right) \\
 & = \InverseRepellingFatouCoordinates_{\lambda_n}\left(
        \AttractingFatouCoordinates_{\lambda_n}(z)
         -\frac{1}{\alpha_n}+\lit_n
 \right).
\end{align*}

Finally, the last part follows from the fact that 
 $\AttractingFatouCoordinates_1$ and $\InverseRepellingFatouCoordinates_1$
 are onto.
\end{proof}

\begin{lem}\label{lem:domain exists}
There exists $\AreaGap>0$ and
 $\TransitTimeMinusInverseRotationNumberBound>0$ such that the following holds.

\begin{enumerate}
\item
Let $\escrad>6$
 and let $(\alpha_n)_n$ be a sequence of real numbers such that
 $\alpha_n\tendsto 0$.

Then 
\begin{equation*}
\Area(\FilledJuliaSet_1)>\Area(\FilledJuliaSet_{\alpha_n}) + \AreaGap.
\end{equation*}

\item
More precisely, if the fractional part $\FractionalPart{\frac{1}{\alpha_n}}$ converges as $n\TendsTo\infty$,
then there exists a non empty open set $V$, relatively compact in $\Interior{\FilledJuliaSet_1}$,
 an integer $n_0$
 and a sequence of whole numbers $(\lit_n)_{n\geq n_0}$
 such that for all $n\geq n_0$,
\begin{enumerate}
\item the set $V$ is a subset of the basin of attraction of $\infty$ of $\qpl_{e^{2i\pi\alpha_n}}$, \label{item:v in A infty}
\item $\qpl_{e^{2i\pi\alpha_n}}^{\iter{\lit_n}}(V)\subset \disk_{\escrad}$,
\item $\left|\lit_n-\frac{1}{\alpha_n}\right|< \TransitTimeMinusInverseRotationNumberBound$,
\item $\Area(V)\geq\AreaGap$. \label{item:unif area lower bound}
\end{enumerate}

\end{enumerate}

\end{lem}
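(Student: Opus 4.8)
The plan is to use the surjectivity of the Lavaurs map on the attracting petal (Lemma~\ref{lem:cv to lavaurs map}) to exhibit, inside $\interior{\fillj_1}$, a piece of definite size that escapes to $\infty$ under every sufficiently close perturbed map. First I would fix once and for all a bounded open subset of the basin of attraction of $\infty$ of $\qpl_1$, say the annulus $B=\{z:3<|z|<5\}$: every $z\in\overline B$ satisfies $|z|\geq 3$, hence $|\qpl_1(z)|\geq|z|^2-|z|\geq 6$, so $\qpl_1(z)$ lies at modulus $\geq 6$ and therefore escapes by Lemma~\ref{lem:green estimates}, and so does $z$, while $\overline B\subset\disk_{\escrad}$ for every $\escrad>6$. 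By Lemma~\ref{lem:cv to lavaurs map}, for each $\ebphase\in\ClosedInterval{0,1}$ the Lavaurs map $\LavaursMap_{\ebphase}(z)=\InverseRepellingFatouCoordinates_1(\AttractingFatouCoordinates_1(z)-\ebphase)$ satisfies $\LavaursMap_{\ebphase}(\BPFPAttractingPetal_1)=\SetOfComplexNumbers$, and $\BPFPAttractingPetal_1\subset\interior{\fillj_1}$ by Theorem~\ref{prop:extended fatou coordinates}; hence $\LavaursMap_{\ebphase}^{-1}(B)\cap\BPFPAttractingPetal_1$ is a nonempty open set, and for $r$ small enough there is a closed round disk $\overline{D(z,r)}\subset\BPFPAttractingPetal_1$ with $\LavaursMap_{\ebphase}(\overline{D(z,r)})\subset B$. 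Let $\rho(\ebphase)$ be the supremum of such radii $r$. The step I expect to require the most care is the uniform positivity of $\rho$: since $(\ebphase,z)\mapsto\LavaursMap_{\ebphase}(z)=\InverseRepellingFatouCoordinates_1(\AttractingFatouCoordinates_1(z)-\ebphase)$ is jointly continuous and $B$ is open, a disk that works for $\ebphase_0$ still works for all nearby $\ebphase$, so $\rho$ is lower semicontinuous and everywhere positive on the compact interval $\ClosedInterval{0,1}$, whence $\rho_0:=\min_{\ebphase}\rho(\ebphase)>0$. I would then set $\AreaGap:=\pi\rho_0^2/8$ and $\TransitTimeMinusInverseRotationNumberBound:=1$, and for each $\ebphase$ fix a centre $z_{\ebphase}$ with $\overline{D(z_{\ebphase},\rho_0/2)}\subset\BPFPAttractingPetal_1$ and $\LavaursMap_{\ebphase}(\overline{D(z_{\ebphase},\rho_0/2)})\subset B$.

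To prove part~2, assume $\FractionalPart{1/\alpha_n}\to\ebphase$ (we may assume $\alpha_n>0$) and set $\lambda_n=e^{2\ii\pi\alpha_n}$, so $|\lambda_n|=1$. I would take $V:=D(z_{\ebphase},\rho_0/2)$; then $\overline V$ is a compact subset of $\BPFPAttractingPetal_1\subset\interior{\fillj_1}$, so $V$ is relatively compact in $\interior{\fillj_1}$, and $\Area(V)=\pi\rho_0^2/4=2\AreaGap>\AreaGap$, which is~(d). I would put $\lit_n:=\IntegerPart{1/\alpha_n}=\frac{1}{\alpha_n}-\FractionalPart{1/\alpha_n}$, a whole number with $\left|\lit_n-\frac{1}{\alpha_n}\right|=\FractionalPart{1/\alpha_n}<1=\TransitTimeMinusInverseRotationNumberBound$, which is~(c). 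By Lemma~\ref{lem:cv to lavaurs map}, $\qpl_{\lambda_n}^{\iter{\lit_n}}\to\LavaursMap_{\ebphase}$ uniformly on the compact set $\overline V$, and since $\LavaursMap_{\ebphase}(\overline V)$ is a compact subset of the open annulus $B$, there is $n_0$ with $3<|\qpl_{\lambda_n}^{\iter{\lit_n}}(z)|<5$ for all $n\geq n_0$ and all $z\in\overline V$. Consequently $\qpl_{\lambda_n}^{\iter{\lit_n}}(V)\subset\disk_{\escrad}$, which is~(b); and, writing $w=\qpl_{\lambda_n}^{\iter{\lit_n}}(z)$, we get $|\qpl_{\lambda_n}(w)|\geq|w|^2-|\lambda_n||w|=|w|^2-|w|>6$ (since $|\lambda_n|=1$ and $|w|>3$), so $\qpl_{\lambda_n}(w)$ lies in the basin of attraction of $\infty$ of $\qpl_{\lambda_n}$ by Lemma~\ref{lem:green estimates} (applicable as $|\lambda_n|=1\leq 5$), hence so does $z$, since $w$ is on its forward orbit; this is~(a).

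For part~1, I would reduce to part~2 by a subsequence argument, which also disposes of sequences whose fractional parts $\FractionalPart{1/\alpha_n}$ do not converge. Suppose $\alpha_n\to 0$ and that $\Area(\fillj_1)>\Area(\fillj_{\lambda_n})+\AreaGap$ fails for infinitely many $n$; passing to such a subsequence and then to a further subsequence along which $\FractionalPart{1/\alpha_n}$ converges, part~2 yields an open set $V\subset\interior{\fillj_1}$ with $\Area(V)=2\AreaGap$ which, lying in the basin of attraction of $\infty$ of $\qpl_{\lambda_n}$, is disjoint from $\fillj_{\lambda_n}$ for $n\geq n_0$; since also $V\subset\fillj_1$, this gives $\Area(\fillj_1\cap\fillj_{\lambda_n})\leq\Area(\fillj_1)-2\AreaGap$, and hence
\begin{equation*}
\Area(\fillj_{\lambda_n})=\Area(\fillj_1\cap\fillj_{\lambda_n})+\Area(\fillj_{\lambda_n}\SetComplement\fillj_1)\leq\Area(\fillj_1)-2\AreaGap+\Area(\fillj_{\lambda_n}\SetComplement\fillj_1).
\end{equation*}
By the upper semicontinuity of $\lambda\mapsto\fillj_\lambda$ and continuity of Lebesgue measure along the nested neighbourhoods of the compact set $\fillj_1$, the term $\Area(\fillj_{\lambda_n}\SetComplement\fillj_1)$ tends to $0$; so $\Area(\fillj_{\lambda_n})<\Area(\fillj_1)-\AreaGap$ for $n$ large along the subsequence, contradicting its choice. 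Therefore $\Area(\fillj_1)>\Area(\fillj_{\lambda_n})+\AreaGap$ for all large $n$, which is part~1. Apart from the cited near-parabolic convergence (Lemma~\ref{lem:cv to lavaurs map}) and the uniform positivity of $\rho_0$, every ingredient above is elementary.
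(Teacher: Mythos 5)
Your proof is correct and follows the paper's route --- pull back a fixed open piece of the basin of $\infty$ of $\qpl_1$ by the Lavaurs map $\LavaursMap_\ebphase$ into $\interior{\fillj_1}$, use compactness of $\ebphase\in\ClosedInterval{0,1}$ to get uniform constants, and transfer to the perturbed dynamics via Lemma~\ref{lem:cv to lavaurs map} --- but your implementation of the uniformity step is cleaner. The paper takes $V=\LavaursMap_\ebphase^{-1}(\Omega)$ for an open $\Omega$ relatively compact in the basin of $\infty$ intersected with $\disk_\escrad$, and asserts without argument that this full preimage is relatively compact in $\interior{\fillj_1}$ and that $\ebphase\mapsto\Area(V)$ is continuous; neither is obvious, since $\LavaursMap_\ebphase$ is not proper and preimages of $\Omega$ could accumulate on $\Boundary\fillj_1$. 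Your choice of a round disk of uniform radius $\rho_0/2$ inside $\BPFPAttractingPetal_1$, justified by lower semicontinuity of the maximal admissible radius $\rho(\ebphase)$ and compactness of $\ClosedInterval{0,1}$, makes the relative compactness and the uniform area bound immediate. You also make explicit the subsequence argument and the upper-semicontinuity estimate $\Area(\fillj_{\lambda_n}\SetComplement\fillj_1)\to 0$ needed to deduce part~1 from part~2, which the paper only gestures at by saying ``the first statement follows from this estimate.''
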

\begin{proof}
Choose a non empty open subset $\Omega$,
 relatively compact in the intersection of the basin of attraction of $\infty$ of $\qpl_1$
 with the disk of radius $\escrad$ centered at $0$.

Let $\EggBeaterPhase\in\ClosedInterval{0,1}$ and
 let's first suppose that the sequence of fractional parts $\FractionalPart{\frac{1}{\alpha_n}}$ converges to $\EggBeaterPhase$.

Let $V=\LavaursMap_{\EggBeaterPhase}^{-1}(\Omega)$.
 By definition $V$ is a non empty open subset
 relatively compact in $\Interior{\FilledJuliaSet_1}$.

The point \ref{item:v in A infty} follows from the fact that $\alpha_n\tendsto 0$.
The two following points then follow from lemma \ref{lem:cv to lavaurs map} by letting
 $\lit_n=\frac{1}{\alpha_n}-\FractionalPart{\frac{1}{\alpha_n}}$ and $P=2$.
 
Finally the uniform lower bound \ref{item:unif area lower bound} on the area comes form the continuity of the mapping $\EggBeaterPhase\mapsto\Area(V)$.
 And the first statement of the lemma follows from this estimate.
\end{proof}

Theorem \ref{thm:discontinuity of area} is simply a corollary of the previous lemma.

\begin{lem}\label{lem:seq of domain by lavaurs}
Let $\EggBeaterPhase\in\ClosedInterval{0,1}$.
 Let $\EscapingDomain$ be a compact set with non empty interior included in the domain of attraction of $\infty$
 of $\qpl_1$
 and such that
 $\EscapingDomain\subset\BPFPRepellingCrescent_1$.
 Then, there exists a sequence of compacts sets $\EscapingDomain^j$ with
 $\EscapingDomain^0=\EscapingDomain$ and such that the following is true for all $j\geq 1$,
\begin{enumerate}
 \item $\Interior{\EscapingDomain^j}\neq \EmptySet$,
 \item $\EscapingDomain^j\subset \FilledJuliaSet_1$,
 \item $\LavaursMap_\EggBeaterPhase(\EscapingDomain^j) \subset {\EscapingDomain^{j-1}}$,
\end{enumerate}
where $\LavaursMap_\EggBeaterPhase=\InverseRepellingFatouCoordinates_{1}\left(\AttractingFatouCoordinates_{1}-\ebphase\right)$
 is a Lavaurs map.
\end{lem}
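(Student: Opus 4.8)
The plan is to build the compact sets $\EscapingDomain^j$, $j\geq 1$, one after another, taking each of them to be a small closed disc. The only facts needed about the Lavaurs map $\LavaursMap_{\EggBeaterPhase}=\InverseRepellingFatouCoordinates_{1}(\AttractingFatouCoordinates_{1}-\EggBeaterPhase)$ are that it is continuous on its domain $\Interior{\FilledJuliaSet_1}$ (in fact holomorphic there, since $\AttractingFatouCoordinates_1$ and $\InverseRepellingFatouCoordinates_1$ are) and that it is surjective onto $\SetOfComplexNumbers$. Surjectivity is already contained in Lemma~\ref{lem:cv to lavaurs map}, which gives $\LavaursMap_{\EggBeaterPhase}(\BPFPAttractingPetal_1)=\SetOfComplexNumbers$, combined with the inclusion $\BPFPAttractingPetal_1\subset\Interior{\FilledJuliaSet_1}$ from item~\ref{item:the attracting petal is in the filled jset} of Theorem~\ref{prop:extended fatou coordinates}. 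Consequently, for every non-empty open set $W\subset\SetOfComplexNumbers$ the preimage $\LavaursMap_{\EggBeaterPhase}^{-1}(W)$ is a non-empty (by surjectivity) open (by continuity) subset of $\Interior{\FilledJuliaSet_1}$; notice that even the open mapping theorem is not needed.

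For the recursion, I would assume $\EscapingDomain^{j-1}$ has already been constructed and is compact with $\Interior{\EscapingDomain^{j-1}}\neq\EmptySet$; for $j=1$ this is part of the hypothesis on $\EscapingDomain^0=\EscapingDomain$. Applying the observation above with $W=\Interior{\EscapingDomain^{j-1}}$, the set $\LavaursMap_{\EggBeaterPhase}^{-1}(\Interior{\EscapingDomain^{j-1}})$ is a non-empty open subset of $\Interior{\FilledJuliaSet_1}$, so it contains a closed disc of positive radius, which I take to be $\EscapingDomain^j$. Then $\EscapingDomain^j$ is compact, has non-empty interior, satisfies $\EscapingDomain^j\subset\Interior{\FilledJuliaSet_1}\subset\FilledJuliaSet_1$, and $\LavaursMap_{\EggBeaterPhase}(\EscapingDomain^j)\subset\Interior{\EscapingDomain^{j-1}}\subset\EscapingDomain^{j-1}$. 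These are exactly the three requested properties, and since $\Interior{\EscapingDomain^j}\neq\EmptySet$ the recursion can be continued, producing the whole sequence $(\EscapingDomain^j)_j$.

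I do not anticipate any genuine obstacle: the lemma is essentially a formal iteration once the surjectivity of the Lavaurs map is in hand. The single point that requires a bit of care is that every $\EscapingDomain^j$ with $j\geq 1$ must lie inside the domain $\Interior{\FilledJuliaSet_1}$ of $\LavaursMap_{\EggBeaterPhase}$ — this is precisely why one selects the disc strictly inside the preimage $\LavaursMap_{\EggBeaterPhase}^{-1}(\Interior{\EscapingDomain^{j-1}})$ rather than only near it. Note also that $\LavaursMap_{\EggBeaterPhase}$ is never evaluated on $\EscapingDomain^0=\EscapingDomain$ itself, so the extra hypotheses that $\EscapingDomain\subset\BPFPRepellingCrescent_1$ and that $\EscapingDomain$ lies in the basin of $\infty$ play no role in the proof of this lemma and are presumably imposed only in view of its later use.
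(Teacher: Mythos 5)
Your proof is correct and follows essentially the same idea as the paper's: iterate preimages under $\LavaursMap_{\EggBeaterPhase}$, relying on its continuity and surjectivity (the latter from Lemma~\ref{lem:cv to lavaurs map} via $\LavaursMap_{\EggBeaterPhase}(\BPFPAttractingPetal_1)=\SetOfComplexNumbers$ together with $\BPFPAttractingPetal_1\subset\Interior{\FilledJuliaSet_1}$). There is a small but genuine difference worth noting: the paper simply sets $\EscapingDomain^{j+1}=\LavaursMap_{\EggBeaterPhase}^{-1}(\EscapingDomain^{j})$, which is closed \emph{in} $\Interior{\FilledJuliaSet_1}$ and has non-empty interior, but is not obviously compact, since $\LavaursMap_{\EggBeaterPhase}$ is not proper on $\Interior{\FilledJuliaSet_1}$ and the preimage of a compact set may accumulate on $\TopologicalBoundary{\FilledJuliaSet_1}$. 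Compactness is part of what the lemma asserts (and is used later, e.g.\ through Lemma~\ref{lem:cv lemma}), so your variant --- picking a small closed disc inside $\LavaursMap_{\EggBeaterPhase}^{-1}\left(\Interior{\EscapingDomain^{j-1}}\right)$ rather than keeping the whole preimage --- actually tightens the argument at no cost. Your observation that the hypotheses $\EscapingDomain\subset\BPFPRepellingCrescent_1$ and $\EscapingDomain\subset\BasinOfAttraction(\infty)$ are not used here is also accurate; they only enter when the lemma is applied in Lemma~\ref{lem:existence of seq of lows escaping domains} and Proposition~\ref{prop:second estimate}, where one needs $\EscapingDomain^{0}$ to lie in $\Disk_6$ and outside $\FilledJuliaSet_{\lambda_n}$.
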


\begin{proof}
Let, for $j\geq 0$, $\EscapingDomain^{j+1}=\LavaursMap_\EggBeaterPhase^{-1}(\EscapingDomain^{j})$.
 By construction, $\EscapingDomain^{j+1}\subset \Interior{\FilledJuliaSet_1}$.
 Moreover, since $\LavaursMap_\EggBeaterPhase$ is onto, $\EscapingDomain^{j+1}$
 has non empty interior whenever $\EscapingDomain^{j}$ has non empty interior.\end{proof}

\begin{lem}\label{lem:existence of seq of lows escaping domains}
Let $\EggBeaterPhase\in\ClosedInterval{0,1}$ and $\LogGrowthOrder\geq 0$.

Let $\EscapingDomain\subset {\BPFPRepellingCrescent_1}$ 
 be a compact set with non empty interior contained in the basin of attraction of $\infty$
 of $\qpl_1$.

Then, there exists $\AlphaNeighbourhoodBound>0$ and a finite sequence $(\EscapingDomain^j)_{j\leq\LogGrowthOrder}$
 of compact sets $\EscapingDomain^j\subset \Interior{\FilledJuliaSet_1}$ with non empty interior, such that $\EscapingDomain^0=\EscapingDomain$ and
 satisfying the following.

For all sequence of $\lambda_n=e^{2\SquareRootOfMinusOne\pi\alpha_n}$ with $\alpha_n\in\OpenClosedInterval{0,\AlphaNeighbourhoodBound}$,
 such that $\FractionalPart{1/\alpha_n}$ converges to $\EggBeaterPhase$,
 there exists a $n_0\geq 0$ such that,
 for all $n\geq n_0$
 and all $j \in\{1,\dots,\CeilingFunction{\LogGrowthOrder}\}$,
\begin{equation*}
\qpl_{\lambda_n}^{\Iterated{\BigIterate_n}}(\EscapingDomain^j) \subset \EscapingDomain^{j-1}. 
\end{equation*}

\end{lem}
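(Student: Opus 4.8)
The plan is to build the sets $\EscapingDomain^j$ as slightly shrunk preimages of $\EscapingDomain$ under the Lavaurs map $\LavaursMap_\EggBeaterPhase=\InverseRepellingFatouCoordinates_{1}\left(\AttractingFatouCoordinates_{1}-\ebphase\right)$, and then to transport the resulting nesting to the perturbed maps by means of the convergence $\qpl_{\lambda_n}^{\Iterated{\BigIterate_n}}\TendsTo\LavaursMap_\EggBeaterPhase$ of Lemma~\ref{lem:cv to lavaurs map}, where $\BigIterate_n=\IntegerPart{1/\alpha_n}=\frac{1}{\alpha_n}-\FractionalPart{1/\alpha_n}$ as in that lemma. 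I would take $\AlphaNeighbourhoodBound$ to be the constant $\alpha_0$ of Theorem~\ref{prop:perturbed fatou coordinates}, so that for sequences $\alpha_n\in\OpenClosedInterval{0,\AlphaNeighbourhoodBound}$ tending to $0$ with $\FractionalPart{1/\alpha_n}\TendsTo\EggBeaterPhase$ the hypotheses of Lemma~\ref{lem:cv to lavaurs map} are met, and set $m=\CeilingFunction{\LogGrowthOrder}$; only the finitely many sets $\EscapingDomain^0,\dots,\EscapingDomain^m$ have to be produced.

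First I would construct the $\EscapingDomain^j$ inductively. Put $\EscapingDomain^0=\EscapingDomain$. Assuming $\EscapingDomain^{j-1}$ is already chosen, compact with non-empty interior, note that $\LavaursMap_\EggBeaterPhase$ is continuous and onto $\SetOfComplexNumbers$ (Lemma~\ref{lem:cv to lavaurs map}), so $\LavaursMap_\EggBeaterPhase^{-1}\left(\Interior{\EscapingDomain^{j-1}}\right)$ is a non-empty open subset of the domain $\Interior{\FilledJuliaSet_1}$ of $\LavaursMap_\EggBeaterPhase$; I would let $\EscapingDomain^j$ be a small closed disk contained in it. Then $\EscapingDomain^j$ is compact, has non-empty interior, lies in $\Interior{\FilledJuliaSet_1}$, and $\LavaursMap_\EggBeaterPhase(\EscapingDomain^j)\subset\Interior{\EscapingDomain^{j-1}}$. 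This is exactly the construction of Lemma~\ref{lem:seq of domain by lavaurs}, with the one refinement that the inclusion is now into the \emph{interior} of $\EscapingDomain^{j-1}$, which is what will make the subsequent perturbation step robust. (Only the sets with $j\geq 1$ sit inside $\Interior{\FilledJuliaSet_1}$: $\EscapingDomain^0=\EscapingDomain$ lies in the basin of $\infty$, but nothing is required of the image beyond its landing in $\EscapingDomain^0$, so this is harmless.)

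Next, given a sequence $\lambda_n=e^{2\SquareRootOfMinusOne\pi\alpha_n}$ as in the statement, I would argue as follows for each $j\in\{1,\dots,m\}$. The set $\LavaursMap_\EggBeaterPhase(\EscapingDomain^j)$ is compact and contained in the open set $\Interior{\EscapingDomain^{j-1}}$, hence lies at some positive distance $\delta_j$ from $\SetOfComplexNumbers\SetComplement\EscapingDomain^{j-1}$. By Lemma~\ref{lem:cv to lavaurs map}, $\qpl_{\lambda_n}^{\Iterated{\BigIterate_n}}$ converges to $\LavaursMap_\EggBeaterPhase$ uniformly on the compact set $\EscapingDomain^j\subset\Interior{\FilledJuliaSet_1}$, so there is $n_j$ with $\sup_{z\in\EscapingDomain^j}\left|\qpl_{\lambda_n}^{\Iterated{\BigIterate_n}}(z)-\LavaursMap_\EggBeaterPhase(z)\right|<\delta_j$ for all $n\geq n_j$, and for such $n$ one obtains $\qpl_{\lambda_n}^{\Iterated{\BigIterate_n}}(\EscapingDomain^j)\subset\EscapingDomain^{j-1}$. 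Setting $n_0=\max\{n_1,\dots,n_m\}$ then yields the conclusion for all $n\geq n_0$ and all $j\in\{1,\dots,m\}$ simultaneously.

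I do not expect a serious obstacle: the content is almost entirely bookkeeping on top of Lemmas~\ref{lem:cv to lavaurs map} and~\ref{lem:seq of domain by lavaurs}. The two points that need a little care are, first, arranging each inclusion $\LavaursMap_\EggBeaterPhase(\EscapingDomain^j)\subset\EscapingDomain^{j-1}$ with a definite margin $\delta_j$ — which is possible because $\LavaursMap_\EggBeaterPhase$ is open and surjective — and, second, exploiting the \emph{uniform} (not merely pointwise) convergence of $\qpl_{\lambda_n}^{\Iterated{\BigIterate_n}}$ on each $\EscapingDomain^j$ together with the finiteness of the index range $\{1,\dots,m\}$ (i.e.\ of $\LogGrowthOrder$), so that a single threshold $n_0$ works for all $j$ at once.
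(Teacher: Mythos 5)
Your proof is correct and follows essentially the same route as the paper: build the $\EscapingDomain^j$ as Lavaurs preimages (Lemma~\ref{lem:seq of domain by lavaurs}), then transfer the nesting to the perturbed iterates $\qpl_{\lambda_n}^{\Iterated{\BigIterate_n}}$ using the locally uniform convergence of Lemma~\ref{lem:cv to lavaurs map}. Where the paper invokes the abstract stability Lemma~\ref{lem:cv lemma} to make the inclusions robust under perturbation, you instead achieve robustness by choosing each $\EscapingDomain^j$ as a small closed disk strictly inside $\LavaursMap_\EggBeaterPhase^{-1}\left(\Interior{\EscapingDomain^{j-1}}\right)$ and then arguing with an explicit margin $\delta_j$; these two devices are interchangeable, but your variant has the small advantage of automatically producing compact $\EscapingDomain^j$, whereas the full preimage $\LavaursMap_\EggBeaterPhase^{-1}(\EscapingDomain^{j-1})$ used in Lemma~\ref{lem:seq of domain by lavaurs} need not be compact as a subset of $\SetOfComplexNumbers$. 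You also correctly observe that the finiteness of the index set $\{1,\dots,\CeilingFunction{\LogGrowthOrder}\}$ is what lets you take a single $n_0=\max_j n_j$, and that $\EscapingDomain^0=\EscapingDomain$ is the one term not living in $\Interior{\FilledJuliaSet_1}$. Both of these points are exactly the ones the terse proof in the paper implicitly uses.
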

\begin{proof}
This follows from lemmas \ref{lem:seq of domain by lavaurs}, \ref{lem:cv to lavaurs map} and \ref{lem:cv lemma}.
\end{proof}

\section{Convergence properties of the approximation of the area}\label{sec:convergence properties of the approximation of the area}

Theorem \ref{thm:discontinuity of area} is a direct consequence of propositions \ref{prop:first estimate}
 and \ref{prop:convergence to area of K1} below.


%
%
%


\begin{prop} \label{prop:first estimate}

There exists a constant $\AreaGap >0$ satisfying the following.

Let $(\alpha_n)_n$ be a sequence of positive real numbers
 converging to $0$ and $(\lis_n)_n$ be a sequence of whole numbers.
Let $\lambda_n=e^{2\ii\pi\alpha_n}$.

Suppose that $\lis_n=\SmallO{2^{1/\alpha_n}}$,
 then there exists $n_0\in\SetOfNaturalNumbers$ such that, for all $n\geq n_0$,
\begin{equation*}
 \pa(\lambda_n, 1, \lis_n) \geq \AreaGap + \area{K_{\lambda_n}}.
\end{equation*}

\end{prop}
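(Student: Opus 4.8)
The strategy is to combine the key lower bound from Lemma \ref{lem:key lemma estim approx from below} with the area gap produced by Lemma \ref{lem:domain exists}. First I would fix $\escrad > 6$ once and for all (say $\escrad = 7$), and take $\AreaGap$ to be the constant furnished by Lemma \ref{lem:domain exists} for this $\escrad$. The plan is to show that, for $n$ large, the hypotheses of Lemma \ref{lem:key lemma estim approx from below} are satisfied with a carefully chosen radius $r = r_n > 1$ tending to $1$, and that with this choice both error terms $\pi(1 - r_n^{2\lis_n + 2})$ and the discrepancy between $r_n^{2\lis_n}\Area(\{z : |\qpl_{\lambda_n}^{\Iterated{\lit_n}}(z)| \leq \escrad\})$ and $\Area(\fillj_{\lambda_n}) + \AreaGap$ become negligible.

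The heart of the matter is the choice of $r_n$. Since $\lis_n = \SmallO{2^{1/\alpha_n}}$, I can pick $r_n > 1$ with $r_n \to 1$ slowly enough that $r_n^{2\lis_n} \to 1$; concretely, writing $r_n = e^{s_n}$ with $s_n > 0$, I need $s_n \lis_n \to 0$, which is arranged by taking $s_n = 1/(\lis_n \cdot t_n)$ for any $t_n \to \infty$, while also keeping $\lambda_n \in \greencrit{\log r_n}$ — but this last condition is automatic since $\lambda_n$ lies on the boundary of the main cardioid, hence in $\dblmand = \greencrit{0} \subset \greencrit{\log r_n}$, and $|\lambda_n| = 1 < 5$. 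With $r_n \to 1$, the integer $\lit_n = \ceilf{\frac{\log\log(11\escrad/6) - \log\log r_n}{\log 2}}$ from that lemma tends to $+\infty$; I must check this $\lit_n$ is consistent with (or can be replaced by) the iteration count provided by Lemma \ref{lem:domain exists}. Here I would invoke Lemma \ref{lem:domain exists}(1): passing to a subsequence along which $\FractionalPart{1/\alpha_n}$ converges (the final inequality, being a $\liminf$-type statement over all large $n$, reduces to the subsequential case by a standard argument — or one applies Lemma \ref{lem:domain exists} directly noting the constants $\AreaGap, P$ are uniform), we get a set $V$ with $\Area(V) \geq \AreaGap$, contained in the basin of $\infty$ of $\qpl_{\lambda_n}$, with $\qpl_{\lambda_n}^{\Iterated{m_n}}(V) \subset \disk_\escrad$ for some $m_n$ with $|m_n - 1/\alpha_n| < P$.

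The remaining point is reconciling the two iteration counts $\lit_n$ (from Lemma \ref{lem:key lemma estim approx from below}) and $m_n$ (from Lemma \ref{lem:domain exists}): I expect this to be the main obstacle. The cleanest fix is to run Lemma \ref{lem:key lemma estim approx from below} not with $\escrad$ directly but to observe that the set $\{z : |\qpl_{\lambda_n}^{\Iterated{\lit_n}}(z)| \leq \escrad\}$ appearing in its conclusion contains, for a suitable relation between $r_n$ and the free parameters, the union of $\fillj_{\lambda_n}$ with $\qpl_{\lambda_n}^{-m_n}$-relevant preimages; more robustly, I would choose $r_n$ so small that $\lit_n$ exceeds any fixed bound and then use that $\{z: |\qpl^{\Iterated{\lit_n}}_{\lambda_n}(z)| \le \escrad\}$ has area at least $\Area(\fillj_{\lambda_n}) + \Area(\text{preimage of } V \text{ slice})$, where the extra piece is $V$ itself pulled back appropriately and hence still has area $\geq \AreaGap$ by conformality of $\qpl_{\lambda_n}$ away from the critical orbit — actually $V$ lands in $\disk_\escrad$ after $m_n$ steps and stays escaping, so $V \subset \{z : |\qpl_{\lambda_n}^{\Iterated{\lit_n}}(z)| \leq \escrad\}$ holds once we match $\lit_n$ to be comparable to $m_n \approx 1/\alpha_n$, which forces the relation $\log r_n \approx 2^{-1/\alpha_n} \cdot \text{const}$, i.e. $r_n^{2\lis_n} - 1 \approx 2\lis_n \cdot 2^{-1/\alpha_n} \to 0$ precisely because $\lis_n = \SmallO{2^{1/\alpha_n}}$. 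Thus for $n \geq n_0$ we obtain
\begin{equation*}
\pa(\lambda_n, 1, \lis_n) \geq \pi(1 - r_n^{2\lis_n + 2}) + r_n^{2\lis_n}\bigl(\Area(\fillj_{\lambda_n}) + \AreaGap\bigr) \geq \Area(\fillj_{\lambda_n}) + \tfrac{\AreaGap}{2},
\end{equation*}
and replacing $\AreaGap$ by $\AreaGap/2$ in the statement finishes the argument. The one genuine subtlety to be careful about is that $V$ must be disjoint (in measure) from $\fillj_{\lambda_n}$ — which it is, since $V$ lies in the basin of $\infty$ — so the areas genuinely add.
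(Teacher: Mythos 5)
Your overall strategy is the paper's: combine Lemma~\ref{lem:key lemma estim approx from below} with the escaping set $V$ from Lemma~\ref{lem:domain exists}, choose $r_n\to 1$ so that $r_n^{2\lis_n}\to 1$, and pass to subsequences along which $\FractionalPart{1/\alpha_n}$ converges. But two steps in the middle of your argument would not survive scrutiny, even though your final paragraph circles back to the right choice.

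First, the initial plan of taking $s_n=1/(\lis_n t_n)$ for an arbitrary $t_n\to\infty$ is a false start: the integer $\lit$ appearing in Lemma~\ref{lem:key lemma estim approx from below} is \emph{forced} by $r_n$, so this choice of $r_n$ hands you a $\lit_n$ that has nothing a priori to do with the escape time $m_n$ from Lemma~\ref{lem:domain exists}. The paper resolves this by going the other way: take $\lit_n=m_n$ first, then set $\log r_n=\bigl(\log\frac{11\escrad}{6}\bigr)2^{-m_n}$, which makes Lemma~\ref{lem:key lemma estim approx from below} produce exactly this $\lit_n$ (one checks the ceiling is attained with no slack). Since $|m_n-1/\alpha_n|$ is bounded by $P$ and $\lis_n=\SmallO{2^{1/\alpha_n}}$, this yields $r_n^{2\lis_n}=\exp\bigl(2\log\tfrac{11\escrad}{6}\cdot 2^{-m_n}\lis_n\bigr)\to 1$; there is no free parameter $t_n$ left.

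Second, the fallback you offer — pulling $V$ back by $\qpl_{\lambda_n}^{\Iterated{\lit_n-m_n}}$ and claiming the preimage still has area $\geq\AreaGap$ ``by conformality'' — is incorrect: conformal maps do not preserve area, and here the preimage of $V$ under further iterates shrinks (it lies in the escaping set where the dynamics is expanding near the Julia set). Relatedly, the claim that $V\subset\{z:|\qpl_{\lambda_n}^{\Iterated{\lit_n}}(z)|\leq\escrad\}$ holds for $\lit_n$ merely ``comparable to'' $m_n$ is false in the direction $\lit_n>m_n$: after time $m_n$ the iterates of $V$ leave $\Disk_\escrad$ on their way to $\infty$, so they do not ``stay.'' The matching of the two iteration counts must be exact, and it is exactly this exact matching (via the explicit formula for $r_n$ above) that closes the gap. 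Once that is fixed, your final displayed inequality and the replacement of $\AreaGap$ by $\AreaGap/2$ go through as you wrote them.
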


\begin{proof}
One must notice first that $\lambda_n$ is such that $|\lambda_n|<5$ and belongs to
 $\greencrit{g}=\{\lambda\in\cplxp,\green_\lambda(\critp_\lambda)\leq g\}$
 for any $g\geq 0$.

For any subsequence $(\alpha_{n_j})_j$ such that $\FractionalPart{\frac{1}{\alpha_{n_j}}}$ converges,
 let $V$ and $\TransitTimeMinusInverseRotationNumberBound$
 be the open set and positive constant defined by lemma \ref{lem:domain exists}. 
 Let $\EscapeRadius>6$.
According lemma \ref{lem:domain exists}, $V$ does not intersect the filled Julia set $\FilledJuliaSet_{\lambda_n}$
 and there is a sequence $(\lit_n)_n$
 of natural numbers converging to $\infty$ such that, for all $n\geq n_0$ big enough,
 $\qpl_{\lambda_n}^{\iter{\lit_n}}(V)\subset \Disk_{\EscapeRadius}$.

For $n\geq n_0$, let $g_n=\left(\log\frac{11\EscapeRadius}{6}\right)2^{-\lit_n}$
 and $r_n=e^{g_n}$.
By lemma \ref{lem:key lemma estim approx from below},
\begin{equation*}
 \pa(\lambda_n,1,\lis_n)\geq\pi\left(1-r_n^{2\lis_n+2}\right)+r_n^{2\lis_n}\left(\Area{\FilledJuliaSet_{\lambda_n}}+\Area{V}\right).
\end{equation*}

By lemma \ref{lem:domain exists}, $\lit_n$ is at a bounded distance from $\frac{1}{\alpha_n}$,
 and since \linebreak $N_n=\SmallO{2^{1/\alpha_n}}$, $r_n^{2\lit_n}=\exp\left(2^{-\lit_n+1}\lis_n\right)$ tends to $1$.
 This implies the proposition.
\end{proof}

\begin{prop}\label{prop:convergence to area of K1}

If $\lis_n=\SmallO{2^{1/\alpha_n}}$,
 then
\begin{equation*}
 \pa(\lambda_n, 1, \lis_n) \TendsTo \Area(\FilledJuliaSet_1).
\end{equation*}
\end{prop}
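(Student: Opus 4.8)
The plan is to squeeze $\pa(\lambda_n,1,\lis_n)$ between an upper and a lower bound, each tending to $\Area(\FilledJuliaSet_1)$. First I would reduce: since it suffices to prove convergence along an arbitrary subsequence and $\FractionalPart{1/\alpha_n}\in\ClosedInterval{0,1}$, one may assume $\FractionalPart{1/\alpha_n}\TendsTo\ebphase$ for some $\ebphase\in\ClosedInterval{0,1}$, and also $\lis_n\TendsTo+\infty$ (the only relevant case; compare theorem \ref{thm:too slow converges to K1}). Write $\LavaursMap_\ebphase=\InverseRepellingFatouCoordinates_1\left(\AttractingFatouCoordinates_1-\ebphase\right):\interior{\FilledJuliaSet_1}\TendsTo\cplxp$ for the corresponding Lavaurs map.

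\emph{Upper bound.} I would fix $r_0>1$. For $n$ large $\lambda_n\in\dblmand\subset\greencrit{\log r_0}$, so lemma \ref{lem:upper bound for area approximation} gives $\pa(\lambda_n,1,\lis_n)\leq\Area(\greensl_{\lambda_n}(\log r_0))+\pi r_0^{-2\lis_n-2}$, the last term tending to $0$ since $\lis_n\TendsTo\infty$. As $1\in\interior{\greencrit{\log r_0}}$ and $\lambda_n\TendsTo1$, continuity of $\lambda\mapsto\pa(\lambda,r_0,\infty)=\Area(\greensl_\lambda(\log r_0))$ on $\interior{\greencrit{\log r_0}}$ yields $\limsup_n\pa(\lambda_n,1,\lis_n)\leq\Area(\greensl_1(\log r_0))$; letting $r_0\TendsTo1$ and using continuity of $g\mapsto\Area(\greensl_1(g))$ at $0$ gives $\limsup_n\pa(\lambda_n,1,\lis_n)\leq\Area(\FilledJuliaSet_1)$.

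\emph{Lower bound.} This is where the hypothesis $\lis_n=\SmallO{2^{1/\alpha_n}}$ is consumed, and where I expect the only real care is needed: one must iterate $\qpl_{\lambda_n}$ about $1/\alpha_n$ times — one passage through the parabolic gate, after which the Lavaurs limit is visible — yet few enough that the prefactor $r_n^{2\lis_n}$ still tends to $1$, and this balance is precisely what makes the threshold $2^{1/\alpha_n}$ appear. I would fix $\EscapeRadius>6$, put $\lit_n=\IntegerPart{1/\alpha_n}$ (an integer, $=1/\alpha_n-\FractionalPart{1/\alpha_n}$) and $r_n=e^{g_n}$ with $g_n=\left(\log\tfrac{11\EscapeRadius}{6}\right)2^{-\lit_n}$; one checks that then the iteration count attached to $(\EscapeRadius,r_n)$ in lemma \ref{lem:key lemma estim approx from below} equals $\lit_n$, that $r_n\in\OpenInterval{1,11\EscapeRadius/6}$, and that $\lambda_n\in\greencrit{\log r_n}$, so for $n$ large that lemma gives
\begin{equation*}
 \pa(\lambda_n,1,\lis_n)\geq\pi\left(1-r_n^{2\lis_n+2}\right)+r_n^{2\lis_n}\,\Area\!\left(\left\{z:\left|\qpl_{\lambda_n}^{\iter{\lit_n}}(z)\right|\leq\EscapeRadius\right\}\right).
\end{equation*}
Since $\lit_n\geq1/\alpha_n-1$ and $\lis_n=\SmallO{2^{1/\alpha_n}}$, one has $\lis_n2^{-\lit_n}\TendsTo0$, hence $g_n\TendsTo0$, $r_n^{2\lis_n}\TendsTo1$ and $r_n^{2\lis_n+2}\TendsTo1$, so the right-hand side equals $\Area\!\left(\left\{z:|\qpl_{\lambda_n}^{\iter{\lit_n}}(z)|\leq\EscapeRadius\right\}\right)+\SmallO{1}$. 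Then I would invoke lemma \ref{lem:cv to lavaurs map}: $\qpl_{\lambda_n}^{\iter{\lit_n}}\TendsTo\LavaursMap_\ebphase$ uniformly on compact subsets of $\interior{\FilledJuliaSet_1}$. For any compact $W\subset\interior{\FilledJuliaSet_1}$ with $\LavaursMap_\ebphase(W)\subset\Disk_\EscapeRadius$, uniform convergence gives $\qpl_{\lambda_n}^{\iter{\lit_n}}(W)\subset\Disk_\EscapeRadius$ for $n$ large, hence $W\subset\left\{z:|\qpl_{\lambda_n}^{\iter{\lit_n}}(z)|\leq\EscapeRadius\right\}$ and $\liminf_n\pa(\lambda_n,1,\lis_n)\geq\Area(W)$. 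Letting $W$ exhaust $\LavaursMap_\ebphase^{-1}(\Disk_\EscapeRadius)$ — the set $\{z:|\LavaursMap_\ebphase(z)|=\EscapeRadius\}$ is a proper real-analytic subset of $\interior{\FilledJuliaSet_1}$, hence of zero area — yields $\liminf_n\pa(\lambda_n,1,\lis_n)\geq\Area\!\left(\LavaursMap_\ebphase^{-1}(\Disk_\EscapeRadius)\right)$. Finally, $\LavaursMap_\ebphase$ being finite everywhere on $\interior{\FilledJuliaSet_1}$, the sets $\LavaursMap_\ebphase^{-1}(\Disk_\EscapeRadius)$ increase to $\interior{\FilledJuliaSet_1}$ as $\EscapeRadius\TendsTo\infty$, so $\liminf_n\pa(\lambda_n,1,\lis_n)\geq\Area(\interior{\FilledJuliaSet_1})=\Area(\FilledJuliaSet_1)$, using the classical fact that $\Area(\JuliaSet_1)=0$.

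The two bounds together give $\pa(\lambda_n,1,\lis_n)\TendsTo\Area(\FilledJuliaSet_1)$ along the subsequence, hence in general. The content is carried by lemma \ref{lem:key lemma estim approx from below} — which converts a truncation of order $\SmallO{2^{1/\alpha_n}}$ into the area of a set of points whose $\lit_n$-th iterate stays bounded, so that it cannot detect that $\FilledJuliaSet_{\lambda_n}$ is strictly smaller than $\FilledJuliaSet_1$ — together with the Lavaurs convergence of lemma \ref{lem:cv to lavaurs map}; the point to watch is the rate-matching $\lis_n2^{-\lit_n}\TendsTo0$ with $\lit_n\sim1/\alpha_n$.
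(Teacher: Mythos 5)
Your proof is correct, and both halves take a somewhat different route from the paper's.

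For the upper bound, the paper simply uses monotonicity of $N\mapsto\pa(\lambda,1,N)$ together with the pointwise convergence $\pa(\lambda_n,1,N)\TendsTo\pa(1,1,N)$ for fixed $N$, then lets $N\TendsTo\infty$; you instead go through lemma \ref{lem:upper bound for area approximation} with a fixed $r_0>1$ and the continuity of $\lambda\mapsto\pa(\lambda,r_0,\infty)$, then let $r_0\TendsTo 1$. Both are fine, and both implicitly need $\lis_n\TendsTo\infty$ (which you correctly flag as part of the hypothesis, inherited from theorem \ref{thm:too slow converges to K1}, though the proposition's statement omits it).

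For the lower bound, the difference is more substantive. You pass to a subsequence along which $\FractionalPart{1/\alpha_n}$ converges to some $\ebphase$, use the Lavaurs convergence $\qpl_{\lambda_n}^{\iter{\lit_n}}\TendsTo\LavaursMap_\ebphase$ from lemma \ref{lem:cv to lavaurs map} at the single iterate $\lit_n=\IntegerPart{1/\alpha_n}$, and exhaust $\interior{\FilledJuliaSet_1}$ by the preimages $\LavaursMap_\ebphase^{-1}(\Disk_\EscapeRadius)$. The paper avoids the subsequence extraction entirely: it exhausts $\interior{\FilledJuliaSet_1}$ by the sets $\FilledJuliaSet_1^\eps$, pushes them a uniformly bounded number $\IterationsToThePetal$ of iterations into the attracting petal, and then uses item \ref{item:iterations through gate} of theorem \ref{prop:perturbed fatou coordinates} together with a bound $\MaxDifferenceInRealPartFatouCoordinate$ on real parts of Fatou coordinates to argue that the orbit stays inside $\Disk_6$ for all $\lit\leq 1/\alpha_n-\TransitTimeMinusInverseRotationNumberBound$, independently of the phase of $1/\alpha_n$. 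Your version is arguably cleaner conceptually — it reuses lemma \ref{lem:cv to lavaurs map} as a black box and the rate-matching step $\lis_n 2^{-\lit_n}\TendsTo 0$ is identical — while the paper's version buys uniformity in $n$ without a subsequence and stays closer to the machinery it will reuse in proposition \ref{prop:second estimate}. Your invocation of the zero-area of the level set $\{|\LavaursMap_\ebphase|=\EscapeRadius\}$ is not actually needed (exhausting the open preimage already suffices), but it does no harm. Your verification that the ceiling in lemma \ref{lem:key lemma estim approx from below} evaluates to $\lit_n$ for your choice of $r_n$ is correct and is a detail the paper leaves implicit.
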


\begin{proof}
For $\lis$ fixed, $\pa(\lambda_n, 1, \lis)$ converges to  $\pa(1, 1, \lis)$ when $n\TendsTo\infty$.
Moreover $\pa(1, 1, \lis)$ converges to $\Area(\FilledJuliaSet_1)$ when $N$ grows, thus
\linebreak
$\ds{\limsup_{n\TendsTo\infty}}\pa(\lambda_n, 1, \lis_n)\leq\Area(\FilledJuliaSet_1)$.

Now we show that $\ds{\liminf_{n\TendsTo\infty}}\pa(\lambda_n, 1, \lis_n)\geq\Area(\FilledJuliaSet_1)$.
Let, for $\eps>0$, $\FilledJuliaSet_1^{\eps}=\left\{z\in\FilledJuliaSet_1:d(z,\TopologicalBoundary{\FilledJuliaSet_1})\geq\eps\right\}$.
Since, for $\eps$ small enough,
 $\FilledJuliaSet_1^{\eps}$ is a compact subset of the interior of $\FilledJuliaSet_1$,
 there exists $\IterationsToThePetal\in\SetOfNaturalNumbers$ such that
 $\qpl_1^{\Iterated \IterationsToThePetal}(\FilledJuliaSet_1^{\eps})\subset \BPFPAttractingPetal_1$.
Moreover there exists $\MaxDifferenceInRealPartFatouCoordinate>0$ such that
\begin{equation*}
\max \left\{
\left|\RealPart(\AttractingFatouCoordinates_\lambda(z))-\RealPart(\AttractingFatouCoordinates_\lambda(\BPFPAttractingPetalReferencePoint))\right|,
z\in\qpl_1^{\Iterated \IterationsToThePetal}(\FilledJuliaSet_1^{\eps})
\right\}\leq \MaxDifferenceInRealPartFatouCoordinate.
\end{equation*}
We may increase the value of $\IterationsToThePetal$ and $\MaxDifferenceInRealPartFatouCoordinate$
 so that the above is true also for $\qpl_{\lambda_n}$ for all $n$ big enough.

From point \ref{item:iterations through gate} of theorem \ref{prop:perturbed fatou coordinates},
 it follows that there exists a constant $\TransitTimeMinusInverseRotationNumberBound$ independent of $n$
 such that for all $\lit\leq\frac{1}{\alpha_n}-\TransitTimeMinusInverseRotationNumberBound$,
 $\qpl_{\lambda_n}^{\Iterated p}(\FilledJuliaSet_1^\eps)\subset\Disk_{6}$.

As previously one concludes that, if
 $r_n=\left(\frac{11\EscapeRadius}{6}\right)^{2^{-\frac{1}{\alpha_n}-\TransitTimeMinusInverseRotationNumberBound}}$,
 then
\begin{equation*}
 \pa(\lambda_n, 1, \lis_n)\geq\pi\left(1-r_n^{2\lis_n+2}\right)+r_n^{2\lis_n}\Area\FilledJuliaSet_1^\eps.
\end{equation*}
Hence $\ds{\liminf}\pa(\lambda_n, 1, \lis_n)\geq \Area\FilledJuliaSet_1^\eps$ for any $\eps>0$.
\end{proof}

\begin{prop}\label{prop:second estimate}
Let $\LogGrowthOrder>0$ and $\EggBeaterPhase\in\ClosedInterval{0,1}$.
Then there exists $\AreaGap>0$ satisfying the following.

Let $(\alpha_n)_n$ be a sequence of positive real numbers
 converging to $0$ and such that $\IntegerPart{\frac{1}{\alpha_n}}\TendsTo \EggBeaterPhase$,
 and let $(\lis_n)_n$ be a sequence of whole numbers.
Define $\lambda_n=e^{2\ii\pi\alpha_n}$.

Suppose that $\log \lis_n\leq \frac{\LogGrowthOrder}{\alpha_n}$.

Then, there exists $n_0\in\SetOfNaturalNumbers$ such that, for all $n\geq n_0$,
\begin{equation*}
 \pa(\lambda_n, 1, \lis_n) \geq \AreaGap + \Area{\FilledJuliaSet_{\lambda_n}}.
\end{equation*}

\end{prop}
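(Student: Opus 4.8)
The plan is to refine the argument of Proposition \ref{prop:first estimate}. There, a single crossing of the parabolic gate produced, after $\approx 1/\alpha_n$ iterations of $\qpl_{\lambda_n}$, an escaping set of definite area inside a fixed disk, and the hypothesis $\lis_n=\SmallO{2^{1/\alpha_n}}$ ensured that the radius $r_n$ attached to that number of iterations was close enough to $1$ for $r_n^{2\lis_n}$ to tend to $1$. Now $\lis_n$ may be as large as $e^{\LogGrowthOrder/\alpha_n}$, so one needs an escaping set of definite area that reaches a fixed bounded region near $\FilledJuliaSet_1$ only after a constant multiple of $1/\alpha_n$ iterations; the cascade of domains in Lemma \ref{lem:existence of seq of lows escaping domains} delivers exactly this.

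First I would fix an integer $d$ with $d>\LogGrowthOrder/\log 2$, a compact set $\EscapingDomain\subset\BPFPRepellingCrescent_1$ with non empty interior contained in the basin of attraction of $\infty$ of $\qpl_1$ (the interior of $\BPFPRepellingCrescent_1$ meets that basin, which accumulates at $0$ from the repelling direction), and a radius $\EscapeRadius>6$ large enough that $\EscapingDomain\subset\Disk_{\EscapeRadius}$ and $\FilledJuliaSet_\lambda\subset\Disk_{\EscapeRadius}$ whenever $|\lambda|\leq 1$. Lemma \ref{lem:existence of seq of lows escaping domains}, applied with parameters $d$ and $\EggBeaterPhase$ and the set $\EscapingDomain$, provides $\AlphaNeighbourhoodBound>0$ and a fixed finite chain of compact sets $\EscapingDomain=\EscapingDomain^0,\EscapingDomain^1,\dots,\EscapingDomain^d$, each with non empty interior and with $\EscapingDomain^j\subset\Interior{\FilledJuliaSet_1}$ for $j\geq 1$, so that for the sequence at hand — which by hypothesis satisfies $\FractionalPart{1/\alpha_n}\TendsTo\EggBeaterPhase$ and, for $n$ large, $\alpha_n\leq\AlphaNeighbourhoodBound$ — there are $n_0$ and the iterate $\BigIterate_n=\IntegerPart{1/\alpha_n}$ (the one produced via Lemma \ref{lem:cv to lavaurs map}) with $\qpl_{\lambda_n}^{\Iterated{\BigIterate_n}}(\EscapingDomain^j)\subset\EscapingDomain^{j-1}$ for all $n\geq n_0$ and $1\leq j\leq d$. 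Composing these $d$ inclusions gives $\qpl_{\lambda_n}^{\Iterated{d\BigIterate_n}}(\EscapingDomain^d)\subset\EscapingDomain\subset\Disk_{\EscapeRadius}$ for $n\geq n_0$. Since $\EscapingDomain$ is a compact subset of the basin of $\infty$ of $\qpl_1$ and $\lambda\mapsto\FilledJuliaSet_\lambda$ is upper semi-continuous, $\EscapingDomain$, and hence its preimage $\EscapingDomain^d$ (the basin of $\infty$ being backward invariant), lies in the basin of $\infty$ of $\qpl_{\lambda_n}$ for $n$ large; in particular $\EscapingDomain^d\cap\FilledJuliaSet_{\lambda_n}=\EmptySet$.

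Next I would put $m_n=d\BigIterate_n$ and $r_n=\left(\frac{11\EscapeRadius}{6}\right)^{2^{-m_n}}$, the point of this choice being that it makes the iterate count occurring in Lemma \ref{lem:key lemma estim approx from below} equal to $m_n$; moreover $r_n\in\OpenInterval{1,11\EscapeRadius/6}$, while $|\lambda_n|<5$ and $\lambda_n\in\greencrit{\log r_n}$ because $\lambda_n=e^{2\ii\pi\alpha_n}$ lies on the unit circle. That lemma then yields
\begin{equation*}
 \pa(\lambda_n,1,\lis_n)\geq\pi\bigl(1-r_n^{2\lis_n+2}\bigr)+r_n^{2\lis_n}\,\Area\bigl(\bigl\{z:|\qpl_{\lambda_n}^{\Iterated{m_n}}(z)|\leq\EscapeRadius\bigr\}\bigr),
\end{equation*}
and the set inside the area contains the two disjoint sets $\FilledJuliaSet_{\lambda_n}$ (forward invariant and inside $\Disk_{\EscapeRadius}$) and $\EscapingDomain^d$ (whose image under $\qpl_{\lambda_n}^{\Iterated{m_n}}$ lies in $\Disk_{\EscapeRadius}$), so its area is at least $\Area(\FilledJuliaSet_{\lambda_n})+\Area(\EscapingDomain^d)$. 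Finally, since $d\log 2>\LogGrowthOrder$ and $\BigIterate_n\geq 1/\alpha_n-1$,
\begin{equation*}
 \log\lis_n-m_n\log 2\leq\frac{\LogGrowthOrder}{\alpha_n}-d\log 2\left(\frac{1}{\alpha_n}-1\right)=\frac{\LogGrowthOrder-d\log 2}{\alpha_n}+d\log 2\TendsTo-\infty,
\end{equation*}
so $\lis_n2^{-m_n}\TendsTo 0$, whence $r_n\TendsTo 1$ and $r_n^{2\lis_n}\TendsTo 1$; as $11\EscapeRadius/6>1$ one has $r_n^{2\lis_n}\geq 1$, and $\pi\bigl(1-r_n^{2\lis_n+2}\bigr)\geq-\tfrac12\Area(\EscapingDomain^d)$ for $n$ large. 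Combining, $\pa(\lambda_n,1,\lis_n)\geq\Area(\FilledJuliaSet_{\lambda_n})+\tfrac12\Area(\EscapingDomain^d)$ for all $n$ large, and since $\EscapingDomain^d$, hence its positive area, depends only on $\LogGrowthOrder$ and $\EggBeaterPhase$, the proposition holds with $\AreaGap=\tfrac12\Area(\EscapingDomain^d)$.

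I expect the only genuinely delicate input to be Lemma \ref{lem:existence of seq of lows escaping domains} itself — the existence of a chain of positive-area domains each mapped into the previous one by a single block $\qpl_{\lambda_n}^{\Iterated{\BigIterate_n}}$ of about $1/\alpha_n$ iterations — which rests on the convergence to the Lavaurs map (Lemma \ref{lem:cv to lavaurs map}) through the compactness Lemma \ref{lem:cv lemma}. Given that, the present proposition is essentially bookkeeping: choosing $d>\LogGrowthOrder/\log 2$, keeping track of the fact that the total iterate used is $m_n\approx d/\alpha_n$, and checking that $r_n^{2\lis_n}\TendsTo 1$.
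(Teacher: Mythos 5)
Your proof is correct and follows essentially the same route as the paper: invoke Lemma \ref{lem:existence of seq of lows escaping domains} to build a chain of compact sets of positive area, each mapped into the previous by a block of $\approx 1/\alpha_n$ iterations, then combine with Lemma \ref{lem:key lemma estim approx from below} and check that $r_n^{2\lis_n}\TendsTo 1$. The one substantive difference is the choice of chain length: the paper uses $\LogGrowthOrder'=\CeilingFunction{\LogGrowthOrder}+1$ and writes the final estimate as
$\log\lis_n-\lit_n\LogGrowthOrder'\TendsTo-\infty$,
but this drops a factor $\log 2$; what is actually needed for $r_n^{2\lis_n}\TendsTo 1$ (with $\log r_n=2^{-\lit_n\LogGrowthOrder'}\log\frac{11\EscapeRadius}{6}$) is $\log\lis_n-\lit_n\LogGrowthOrder'\log 2\TendsTo-\infty$, and the paper's choice of $\LogGrowthOrder'$ does not guarantee $\LogGrowthOrder'\log 2>\LogGrowthOrder$ when $\LogGrowthOrder$ is large. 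Your choice $d>\LogGrowthOrder/\log 2$ fixes this cleanly, and the rest of your bookkeeping (using $\BigIterate_n\geq 1/\alpha_n-1$, absorbing $\pi(1-r_n^{2\lis_n+2})$ into half the gap, setting $\AreaGap=\tfrac12\Area(\EscapingDomain^d)$) is correct. So your write-up is not only faithful to the paper's strategy but also repairs a small oversight in its constant-tracking.
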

\begin{proof}
Define $\LogGrowthOrder'=\CeilingFunction{\LogGrowthOrder}+1$.
According to lemma \ref{lem:existence of seq of lows escaping domains}, there exists a  finite sequence
 $(\EscapingDomain^j)_{j\leq\LogGrowthOrder'}$
 of compact sets $\EscapingDomain^j\subset \Interior{\FilledJuliaSet_1}$ with non empty interior,
 such that $\EscapingDomain^0$ is a subset of $\Disk_6$ (see remark \ref{rem:attracting petal in disk6})
 and of the basin of attraction of $\infty$ of $\qpl_{\lambda_n}$, for all $n\geq n_0$ (for some $n_0$),
 and such that $\qpl_{\lambda_n}^{\Iterated{\BigIterate_n}}(\EscapingDomain^j) \subset \EscapingDomain^{j-1}$,
 for $j=1,\dots,\LogGrowthOrder'$.

Then, from lemma \ref{lem:key lemma estim approx from below}, it follows that
\begin{equation*}
\pa(\lambda_n, 1, \lis_n)\geq
     \pi\left(1-r_n^{2\lis_n+2}\right)
     +  r_n^{2\lis_n}
          \left( \Area{\FilledJuliaSet_{\lambda_n}} + \Area{\EscapingDomain^{\LogGrowthOrder'}} \right),
\end{equation*}
with $\log r_n = \log\frac{11\EscapeRadius}{6}\cdot 2^{-\lit_n\LogGrowthOrder'}$ and $\lit_n=\IntegerPart{\frac{1}{\alpha_n}}$.

Since
\begin{equation*}
 \log\lis_n- \lit_n\LogGrowthOrder'\leq \LogGrowthOrder\left(\frac{1}{\alpha_n}-\IntegerPart{\frac{1}{\alpha_n}}\right)-\IntegerPart{\frac{1}{\alpha_n}}
 \TendsTo-\infty,
\end{equation*}
the above inequality implies the proposition.
\end{proof}

\bibliographystyle{plain}
\bibliography{./all}


\end{document}